\documentclass[a4paper, 10pt]{article}
\usepackage[utf8]{inputenc}
\usepackage{epstopdf}
\usepackage{abstract} 
\usepackage{graphicx} 
\graphicspath{{./pictures}}
\usepackage[ruled, linesnumbered, boxed, algosection]{algorithm2e} 
\usepackage{amsmath} 
\usepackage{amssymb} 
\usepackage{amsthm} 
\usepackage{float} 
\usepackage{hyperref} 
\usepackage{enumerate} 
\usepackage{mathrsfs} 
\usepackage{epstopdf} 
\usepackage{appendix} 
\usepackage{mathtools}
\usepackage{color}
\usepackage{booktabs}
\usepackage{geometry}
\usepackage{subcaption}
\usepackage[backend=biber, style=numeric, sorting=none]{biblatex} 
\newcommand{\T}{\top}
\newcommand{\F}{\mathrm{F}}

\newcommand{\E}{\mathbb{E}}
\newcommand{\tr}{\mathrm{tr}}

\newtheorem{theorem}{Theorem}[section]  
\newtheorem{lemma}[theorem]{Lemma}       
\newtheorem{corollary}[theorem]{Corollary} 

\theoremstyle{remark}
\newtheorem{remark}{Remark}[section]

\numberwithin{figure}{section}

\numberwithin{equation}{section} 
\title{A stochastic gradient algorithm for non-separable optimization with convergence guarantee}
\author{Yingzhou Li\thanks{School of Mathematical Sciences, Fudan University; Shanghai Key Laboratory for Contemporary Applied Math
ematics, Fudan University, \href{mailto:yingzhouli@fudan.edu.cn}{yingzhouli@fudan.edu.cn}}, 
Ruofan Wu\thanks{School of Mathematical Sciences, Fudan University, \href{mailto:rfwu22@m.fudan.edu.cn}{rfwu22@m.fudan.edu.cn}}}
\date{}

\begin{document}

\maketitle
\thispagestyle{empty}

\begin{abstract}
    We study non-separable objectives in which the loss depend on dataset-level quantities. We introduce an SGD-style framework that employs two batch-gradient constructs: the ideal per-batch gradient `$G$' and a cached surrogate `$H$' for cases where full-data terms are expensive. 
    Notably, in the sample-wise separable case, our method reduces to standard mini-batch SGD. Our main contribution is a unified local convergence theory: under mild smoothness and Jacobian-boundedness assumptions,   
    we prove local linear convergence under local strong convexity and local $O(1/k)$ sublinear convergence under local convexity for both `$G$'-driven and `$H$'-driven updates. 
    Crucially, these guarantees hold for fixed step sizes within explicitly characterized ranges; we provide explicit bounds showing how cache staleness, surrogate approximation error, batch size, and step size  influence the convergence constants and allowable step-size ranges.
    
\textbf{Keywords:} Non-separable optimization, stochastic gradient descent, constant step size, local convergence theory
\end{abstract}
\section{Introduction}\label{intro}
Modern deep learning systems are trained by minimizing an objective function that quantifies the fit between the model's behavior and the observed training data. The dominant paradigm assumes that this objective decomposes into independent per-sample losses, an assumption that underpins stochastic gradient descent (SGD) and its variants. In this paper, we study the setting where this separability fails and the objective couples all samples through a global operator.
As a point of departure, the standard setting is expressed as a sample-wise separable finite sum,
\[
\mathcal{L}(\theta) = \sum_{i=1}^{N} \ell(\theta; x_i, y_i),
\]
where each term depends only on a single sample $(x_i, y_i)$, and mini-batch SGD provides unbiased gradient estimates. 
In contrast, the objective we consider takes the coupled form
\[
\mathcal{L}(\theta) = F\left( \{\phi(\theta; x_i, y_i)\}_{i=1}^N \right),
\]
where $\phi(\theta; x_i, y_i)$ denotes the per-sample model output and $F$ is a coupling operator that depends on the whole dataset. A growing class of modern learning objectives---including those arising in contrastive representation learning, deep clustering, and losses involving global dataset statistics---falls into this non-separable category. In such settings, the mini-batch gradient
\[
\tilde{g}_k = \nabla_\theta F\left( \{\phi(\theta_k; x_j, y_j)\}_{j \in \mathcal{B}_k} \right)
\]
is generally a biased estimator of the true gradient: a small batch cannot faithfully represent the global coupling operator, leading to both relation truncation and skewed batch statistics.

Training deep neural networks is typically approached through iterative stochastic first-order methods, with SGD and its adaptive variants—Adagrad \cite{duchi2011adaptive}, ADADELTA \cite{zeiler2012adadelta}, Adam \cite{kingma2014adam}—forming the practical backbone. These methods estimate gradients from randomly sampled mini-batches, which drastically reduce per-iteration cost relative to full-batch approaches and make large-scale optimization tractable \cite{duchi2011adaptive,zeiler2012adadelta,kingma2014adam,schmidt2017minimizing,johnson2013accelerating,defazio2014saga}. Variance-reduction methods (e.g., SAG \cite{schmidt2017minimizing}, SVRG \cite{johnson2013accelerating}, SAGA \cite{defazio2014saga}) can accelerate convergence by reducing stochastic gradient noise, yet their theoretical guarantees remain restricted to separable objectives. When the objective is non-separable, the mini-batch gradient becomes inherently biased—a structural error driven by relation truncation and skewed batch statistics, as noted above. Crucially, this bias is distinct from ordinary stochastic variance, persisting even in highly expressive regimes, and is not addressed by variance-reduction techniques that assume a decomposable objective. The non-separable case therefore remains inadequately served by general-purpose stochastic algorithms. While a few specialized methods have been proposed for particular non-decomposable settings \cite{NIPS2014_9638ddfc, chuang2020debiased, Dan2021}, they do not extend to general coupled objectives.

Even for separable objectives, fixed-step SGD is theoretically limited to convergence to a neighborhood of optima, as highlighted in standard analyses \cite{bottou2018optimization}, which leaves a gap between empirical success and formal guarantees. The nonconvex, high-dimensional, and often nonsmooth nature of deep-learning objectives poses substantial theoretical and practical challenges. In particular, vanilla SGD enjoys the advantages of simplicity and low per-iteration cost, but convergence to the exact optimum with a fixed step size typically require restrictive conditions such as interpolation (i.e., the existence of a model that fits all training samples simultaneously). Classical analyses for general nonconvex problems often demand diminishing step sizes to ensure asymptotic convergence to stationary points, yielding slow sublinear rates that limit practical efficiency \cite{nemirovski2009robust,nesterov2013gradient,sutskever2013importance,cohen2018acceleration,allen2018katyusha,lin2015universal,frostig2015unregularizing,bottou2018optimization,lei2019stochastic,li2019convergence}. Variance-reduction methods offer an alternative route: by correcting the stochastic gradient noise, they enable exact convergence under constant step sizes for separable finite-sum problems, but at the cost of additional memory or periodic full-gradient computations \cite{johnson2013accelerating,defazio2014saga}. Modern machine learning (ML) and artificial intelligence (AI) systems increasingly rely on highly over-parameterized models, particularly deep neural networks (DNNs). In this regime, models are sufficiently expressive to represent optimal solutions and can nearly perfectly fit training data even when the number of parameters far exceeds the sample size \cite{zhang2021understanding,tan2019efficientnet,huang2019gpipe,kolesnikov2020big}. A large body of work documents the ``double descent'' phenomenon: after a critical model-complexity threshold, further over-parameterization can improve generalization and fundamentally alter the geometry of the loss landscape \cite{belkin2019reconciling,huang2017densely,sagun2017empirical}. Recent theoretical progress has shown that under certain structural conditions—most prominently interpolation—fixed-step SGD can enjoy much stronger behavior. Bassily et al. \cite{bassily2018exponential} proved exponential convergence rates for SGD on over-parameterized nonconvex models under interpolation, where a global minimizer fits all training samples exactly. Under gradient-dominance (Polyak--Łojasiewicz) conditions, Vaswani et al. \cite{vaswani2019fast} obtained linear convergence rates without requiring strong convexity. Liu et al. \cite{liu2023aiming} showed that implicit regularization in over-parameterized regimes can bias SGD toward flat minima, enabling fast convergence without explicit step-size decay. These results illustrate the exceptional efficiency of vanilla SGD when interpolation or related structural assumptions hold. However, many practical settings violate strict interpolation: examples include noisy labels, explicit regularization (e.g., weight decay), and inherently non-interpolable tasks like unsupervised clustering. In such non-interpolating regimes, vanilla SGD's speed and guarantees often deteriorate. 

All of these advances—variance reduction and interpolation-based analysis alike—are derived under the assumption that the objective is sample-wise separable \cite{bassily2018exponential,vaswani2019fast,liu2023aiming,khaled2020better,gower2021sgd,sebbouh2021almost,fu2023accelerated,jin2020convergence}. Because they rest on the separability assumption, they do not resolve the structural bias that arises when the objective depends on global sample interactions. Crucially, non-separable objectives with global sample coupling are inherently non-interpolable in the mini-batch setting: because the loss depends on the full dataset, a model that interpolates a single mini-batch does not necessarily interpolate the entire training set. Consequently, even if one could construct an unbiased gradient estimator, the fast-convergence theory for fixed-step SGD would remain inapplicable. Taken together, the non-separable setting suffers from a compounding of two failures: the standard mini-batch gradient is structurally biased, and the fast-convergence theory for fixed-step SGD is inapplicable. To address these issues, we propose a novel algorithmic approach. Under the over-parameterized regime, our proposed algorithm admits exact local convergence with a constant step size, while retaining the computational simplicity of mini-batch SGD. Our main contributions are:

\begin{itemize}
    \item We introduce an algorithm that generalizes standard mini-batch SGD to sample-coupled objectives by correcting the inherent bias in batch-wise gradient estimates. Under the sample-wise separable case, the method reduces to standard mini-batch SGD.
    \item We prove that, for objectives that are locally convex near an optimum, the algorithm converges locally to the exact optimum with a constant step size, without the step-size decay that standard SGD requires to suppress gradient noise.
    \item We establish fast local convergence rates: locally linear convergence in the presence of local strong convexity, and locally sublinear convergence for merely locally convex objectives, both under constant step sizes.
\end{itemize}

The remainder of the paper is organized as follows. In Section 2 we present the algorithm. Section 3 gives a local convergence analysis for the ideal per-batch gradient $G$, and Section 4 presents analogous results for the cached surrogate $H$. Section 5 reports numerical experiments that illustrate the empirical behavior of the method. We conclude with final remarks and directions for future work in Section 6.
\section{A novel stochastic gradient algorithm}

\subsection{Background}\label{background}
Building on the problem setting introduced in Section~\ref{intro}, this section reviews the mini-batch SGD framework and formalizes the structural bias that arises when global sample coupling prevents unbiased gradient estimation. Most neural network training problems are optimization problems: one minimizes the loss function $\mathcal{L}(\theta)$ to obtain optimal parameters $\theta^{}$. Classical (full-batch) gradient descent updates parameters by
\begin{equation*}
\theta_{k+1} = \theta_k - \eta \nabla_\theta \mathcal{L}(\theta_k),
\end{equation*}
but this is computationally prohibitive for the large datasets typical in deep learning. To address this, stochastic gradient descent \cite{robbins1951stochastic}, and in particular its mini-batch variant, uses randomly sampled subsets $\mathcal{B}_k$ to approximate the true gradient. The parameter update at step $k$ is
\begin{equation*}
\theta_{k+1} = \theta_{k} - \eta_{k} \frac{1}{|\mathcal{B}_k|}
\sum_{i \in \mathcal{B}_k} \nabla_\theta \ell(\theta_k; x_i, y_i),
\end{equation*}
where $x_i$ and $y_i$ denote the input features and target (or label) of the $i$-th sample, respectively, $\ell(\cdot)$ denotes the per-example loss, $|\mathcal{B}_k|$ is the mini-batch size (typically 32–512), and $\eta_k$ is the learning rate. This approach offers three practical properties \cite{bottou2010large,dean2012large}:
\begin{itemize}
\item \textit{Hardware efficiency}: mini-batches enable parallel processing (e.g., on GPUs/TPUs) and higher hardware utilization;
\item \textit{Variance reduction}: averaging over a batch produces a more stable gradient estimator than single-sample updates;
\item \textit{Implicit regularization}: controlled stochasticity often improves generalization.
\end{itemize}

The standard mini-batch SGD framework assumes a decomposable objective $\mathcal{L}(\theta)=\mathbb{E}_{(x,y)}[\ell(\theta;x,y)]$, so that per-batch gradients provide unbiased estimates of the full gradient. Modern objectives, however, cannot be decomposed into independent per-sample losses; as introduced in Section~\ref{intro}, they instead couple the entire dataset through a global operator of the form
\[
\mathcal{L}(\theta) = F\left( \{ \phi(\theta; x_i, y_i) \}_{i=1}^N \right),
\]
where $F$ is a coupling operator (for example, a sorting or a statistic over the dataset). In such settings the mini-batch gradient
\[
\tilde{g}_k = \nabla_\theta F\left( \{ \phi(\theta_k; x_j, y_j) \}_{j \in \mathcal{B}_k} \right)
\]
is generally a biased estimator of $\nabla_\theta\mathcal{L}$, with the bias depending on $|\mathcal{B}_k|$ and on the sensitivity of $F$ \cite{zaheer2017deep}.

This issue is particularly acute in unsupervised learning, where objectives inherently 
involve relational coupling between samples \cite{dean2012large,keskar2017on}. 
Given unlabeled data $\mathcal{X} = \{x_i\}_{i=1}^N$, 
losses typically take the form:
\[
\mathcal{L}(\theta) = F\Big( \{ \phi_\theta(x_i), \{ \phi_\theta(x_j) \}_{j \in \mathcal{R}(x_i)} \}_{i=1}^N \Big)
\]
where $\phi_\theta$ is a representation encoder and $\mathcal{R}(x_i)$ denotes a relational set (for example, neighbors used in contrastive learning). The corresponding mini-batch gradient commonly used in practice is
\[
\tilde{g}_k = \nabla_\theta F\left( \{ \phi_\theta(x_j), \{ \phi_\theta(x_l) \}_{l \in \mathcal{R}(x_j) \cap \mathcal{B}_k} \}_{j \in \mathcal{B}_k} \right)
\]
which gives rise to two challenges beyond the usual stochastic gradient variance \cite{ruder2016overview}:
\begin{itemize}
\item \textit{Relation truncation}: the intersection $\mathcal{R}(x_j)\cap\mathcal{B}_k$ may omit important long-range dependencies, degrading the fidelity of the batch-wise objective;
\item \textit{Biased statistics}: dataset-level operators $F$ (e.g., distribution-matching or global moments) produce skewed statistics when evaluated on small batches, resulting in biased gradient estimates.
\end{itemize}
While reducing the step size can mitigate the effect of stochasticity, it slows convergence. To achieve fast convergence with constant step sizes, we therefore seek stochastic gradient algorithms that produce (or closely approximate) unbiased gradient estimates in the presence of inter-sample coupling. 

\subsection{Setup}
\label{subsec:setup}
\par In the following, for convenience and consistency we express the loss as:
\[
\mathcal{L}(\theta) = F\big( Y(\theta) \big)
\]
where $Y(\theta)\in\mathbb{R}^{N\times d}$ is the output matrix whose rows are sample-wise outputs, and $F$ is a nonlinear function. To facilitate analysis, we focus on domains where $F$ is locally convex in $Y$, a condition often met in neural-network optimization landscapes. 

In traditional mini-batch training one evaluates the model on a subset (batch)
$\mathcal{B}_k$ and uses the resulting batch-based quantity to form an update. To represent this formally we use a masking interpretation: 
let $\text{mask}_{\mathcal{B}}:\mathbb{R}^{N\times d}\rightarrow\mathbb{R}^{N\times d}$ denote the operator that keeps rows indexed by
$\mathcal{B}$ and ignores (i.e., masks out) other rows. We then denote the batch-evaluated object by 
$Y_{\mathcal{B}}\coloneqq \text{mask}_{\mathcal{B}}(Y)$. Note that when $F$ is sample-wise separable (i.e., $F(Y)=\sum_{n}f(Y_n)$) this mask is equivalent to zeroing out the non-batch rows; however, when 
$F$ depends on global statistics (e.g., $Y^{\T}Y$ or distribution-matching operators) zeroing out the non-batch rows is not equivalent to true batch evaluation. 
In such cases, we treat the batch operator as ``apply $F$ only to the submatrix formed by the batch rows'' (i.e., evaluate $F$ on the dataset restricted to $\mathcal{B}$), and denote this by $F_{\mathcal{B}}(Y)$.

By the chain rule the full-parameter gradient can be written compactly using a contraction over the sample and output indices. For notational clarity, we denote this contraction by $\times$. Concretely, for an $N\times d$ matrix $A$ and an $N\times d\times s$ tensor $B$, we
define
\[
A\times B\in\mathbb{R}^{s},\quad (A\times B)_{m} = \sum_{n=1}^{N}\sum_{j=1}^{d} A_{nj}B_{njm},
\]
so that
\begin{equation*}
    \nabla_\theta \mathcal{L} = \nabla_{Y}F(Y(\theta))\times\nabla_{\theta}Y(\theta),
\end{equation*}
Equivalently, writing per-sample Jacobians $\nabla_{\theta}Y_{n}(\theta)\in\mathbb{R}^{d\times s}$ and the 
$n$-th row $\nabla_{Y}F_{n}\in\mathbb{R}^{d}$, we have the componentwise expansion
\begin{equation*}
\nabla_\theta \mathcal{L} = \sum_{n=1}^{N} (\nabla_{Y} F)_n \cdot \nabla_{\theta} Y_n (\theta)
\end{equation*}
where the dot denotes multiplication of a row vector with a 
$d\times s$ Jacobian yielding an 
$s$-vector.

In the mini-batch formulation, the gradient commonly computed in implementations can be written as
\[
\tilde{g}_k = \nabla_{\theta} F
\left( Y_{\mathcal{B}_k}(\theta_k) \right)={\nabla_{Y} F
\left( Y_{\mathcal{B}_k}(\theta_k) \right)} \times {\nabla_{\theta} Y(\theta_k)}.
\]
When $\nabla_{Y} F\left( Y_{\mathcal{B}_k}(\theta_k) \right)$ is nonzero only on rows indexed by 
$\mathcal{B}_k$ the contraction reduces to summation over 
$n\in \mathcal{B}_k$. In practical implementations one only computes the per-sample Jacobians 
$\nabla_{\theta} Y_{n}(\theta_k)$ for $n\in \mathcal{B}_k$ (i.e., the 
$|\mathcal{B}_k|\times d$ block), and automatic differentiation yields these Jacobians exactly for the current batch; thus the potential source of statistical bias in 
$\tilde{g}_k$ is the batch-evaluated output-gradient 
$\nabla_{Y} F( Y_{\mathcal{B}_k})$ (or more generally the batch operator 
$F( Y_{\mathcal{B}_k})$ ) rather than the per-sample Jacobians.
When $F$ is non-separable or depends on global statistics, it is typical that
$\E_{\mathcal{B}}[\nabla_Y F(Y_{\mathcal{B}})] \neq \nabla_Y F(Y)$, and hence 
$\tilde{g}$ is biased.
Revisiting the example from Section~\ref{background}, consider
\[
\mathcal{L}(\theta) = F\Big( \{ \phi_\theta(x_i), \{ \phi_\theta(x_j) \}_{j \in \mathcal{R}(x_i)} \}_{i=1}^N \Big)
\]
where $\phi_\theta$ is a representation encoder and $\mathcal{R}(x_i)$ denotes a relational set.
The practical mini-batch evaluation replaces the global interactions by their batch-restricted counterparts, leading to a batch gradient of the form
\[
\tilde{g}_k = \nabla_Y F\left( \{ \phi_\theta(x_j), \{ \phi_\theta(x_l) \}_{l \in \mathcal{R}(x_j) \cap \mathcal{B}_k} \}_{j \in \mathcal{B}_k} \right)\times \nabla_{\theta}Y(\theta_k),
\]
with the output-gradient factor alone carrying the relation truncation and skewed statistics.

The per-sample Jacobian slices $\nabla_{\theta} Y_{n}$ for $n\in\mathcal{B}$ are deterministic given $\theta$ and are computed exactly 
by automatic differentiation, hence they do not introduce statistical bias; 
therefore the bias of the mini-batch estimator is determined primarily by the 
output-gradient $\nabla_Y F$ (or the batch operator $F_\mathcal{B}$). To mitigate this 
bias while preserving per-iteration efficiency, we next introduce notation and approximations 
that allow us to construct batch gradients that more faithfully estimate the true 
parameter gradient.
\subsection{Ideal and Surrogate Stochastic Gradient Descent}
\label{subsec:grad}
We will use the following names and symbols for the gradient estimators that appear throughout the paper. 
The ideal per-batch gradient, denoted $G$, corresponds to selecting the rows of the full output-gradient 
$\nabla_{Y} F$ associated with the current batch and contracting them with the per-sample Jacobians; 
$G$ is unbiased but typically requires access to global data-dependent quantities. 
The cached surrogate gradient, denoted $H$, replaces expensive global terms in $\nabla_{Y} F$ by cheap surrogates 
computed from the cached outputs $\tilde{Y}$; $H$ is computationally efficient and reduces to $G$ when the 
cached quantities are exact. 

Define the ideal batch-evaluated gradient (using the masked full output) as 
\[
G= \nabla_{\mathcal{B}} F
\left( Y(\theta) \right)\times 
\nabla_{\theta} Y(\theta),
\]
where $\nabla_{\mathcal{B}} F$ is the $N \times d$ matrix that equals $\nabla_{Y} F$ 
on rows indexed by the batch $\mathcal{B}$ and is zero elsewhere.

Since $\nabla_\theta Y$ can be viewed as the vertical concatenation of per-sample Jacobians,
\[
\nabla_{\theta} Y = \begin{bmatrix}
    \nabla_{\theta} Y_1 \\
    \nabla_{\theta} Y_2 \\
    \vdots \\
    \nabla_{\theta} Y_N
\end{bmatrix}, \quad 
\nabla_{\theta} Y_n = \frac{\partial Y_n}{\partial \theta} \in \mathbb{R}^{d \times s}.
\]
Here $\nabla_{\theta} Y_n$ is the Jacobian of the $n$-th sample's output $Y_n \in \mathbb{R}^d$ 
with respect to parameters $\theta$. Thus the parameter gradient can be written as
\[
\nabla_\theta \mathcal{L} = \nabla_{Y} F(Y(\theta)) \times \nabla_{\theta} Y
= \sum_{n=1}^{N} (\nabla_{Y} F)_n \cdot \nabla_{\theta} Y_n (\theta)
\]
where $(\nabla_{Y} F)_n$ is the $n$-th row of $\nabla_{Y} F$ and 
$\nabla_{\theta} Y_n (\theta)$ is the Jacobian of the $n$-th output. Therefore,
\[
G = \nabla_{\mathcal{B}} F
\left( Y(\theta) \right)\times 
\nabla_{\theta} Y(\theta) = \sum_{n\in\mathcal{B}} (\nabla_{Y} F)_n \cdot \nabla_{\theta} Y_n (\theta)
\] 
which matches the computational pattern of standard mini-batch gradients and, by construction, unbiasedly 
estimates $\nabla_\theta\mathcal{L}$.

Computing the exact rows $(\nabla_Y F)_n$ is non-trivial because the $n$-th row generally couples to all other rows of $Y$ through the global operator $F$.
To address this we introduce an approximate output-gradient $\widetilde{\nabla_{Y} F}$ that depends on a cached approximation
$\tilde{Y}$ and on the current $Y$:
$$\widetilde{\nabla_{Y} F} = \widetilde{\nabla_{Y} F}(\tilde{Y},Y),$$ 
and define the practical batch estimator
\[
H = \widetilde{\nabla_{\mathcal{B}}F}
\left( Y(\theta) \right)\times 
\nabla_{\theta} Y(\theta) = \sum_{n\in\mathcal{B}} (\widetilde{\nabla_{Y} F})_n \cdot \nabla_{\theta} Y_n (\theta),
\]
where $\widetilde{\nabla_{\mathcal{B}}F}$ equals $\widetilde{\nabla_{Y}F}$ on rows in $\mathcal{B}$ and is zero elsewhere.
The approximation $\widetilde{\nabla_{Y} F}(\tilde{Y},Y)$ is constructed by evaluating computationally inexpensive terms at the current
$Y$ while substituting $\tilde{Y}$ for the expensive-to-evaluate global terms.
Concretely:
\begin{itemize}
\item $\tilde{Y}$ is a cached approximation of $Y$  maintained across iterations; on iteration $k$ we update the cached rows for the current batch and keep other rows unchanged:
\[ (\tilde{Y}_{{\text{new}}})_{n} = \left\{ 
     \begin{array}{ll}
        (Y_{\text{new}})_{n} & \text{if }  n \in \mathcal{B} \\
       (\tilde{Y}_{{\text{old}}})_{n} & \text{if } n \notin \mathcal{B}
     \end{array}
   \right. \]
\item The approximation replaces only the expensive parts of $\nabla_Y F$ by expressions evaluated
at $\tilde{Y}$; inexpensive or local terms are evaluated at the current $Y$.
\item This design allows incremental updates of global quantities (e.g., $\tilde{Y}^{\T}\tilde{Y}$) using low-cost rank-$|\mathcal{B}|$ updates. 
For instance, if $\nabla_{Y} F(Y) = AY + BY^{\T}Y$ with A sparse or diagonal, then $AY$ can be evaluated at the current $Y$ while
$BY^{\T}Y$ is approximated using $\tilde{Y}$. The cached Gram matrix admits the cheap update
\[
\tilde{Y}_{\text{new}}^{\top}\tilde{Y}_{\text{new}} = \tilde{Y}_{\text{old}}^{\top}\tilde{Y}_{\text{old}} 
-\sum\limits_{n\in\mathcal{B}}(\tilde{Y}_{\text{old}})_{n}^{\top}(\tilde{Y}_{\text{old}})_{n}
+\sum\limits_{n\in\mathcal{B}}(\tilde{Y}_{\text{new}})_{n}^{\top}(\tilde{Y}_{\text{new}})_{n}.
\]
Hence one can update $\widetilde{\nabla_{\mathcal{B}}F}$ without recomputing quantities over $\mathcal{B}^c$ (more precisely 
over $\mathcal{B}_{\text{new}}^c\cap\mathcal{B}_{\text{old}}^c$).
\end{itemize}

Note that whereas the standard mini-batch gradient is obtained directly by 
automatic differentiation, our proposed gradient requires additional processing. 
Concretely, one can obtain the parameter update by differentiating the surrogate objective
$$\frac{1}{2}\tr(Y_{\mathcal{B}}(\theta)^{\T}\nabla_{\mathcal{B}}F)\quad
\left(\text{or}\quad\frac{1}{2}\tr(Y_{\mathcal{B}}(\theta)^{\T}\widetilde{\nabla_{\mathcal{B}}F})\right),$$
treating $Y_{\mathcal{B}}(\theta)$ as a function of $\theta$ while holding $\nabla_{\mathcal{B}}F$ (or $\widetilde{\nabla_{\mathcal{B}}F}$) fixed.

Although the proposed batch gradient can be unbiased, it may still converge only to a neighborhood of optimal points, similar to the SGD analysis \cite{bottou2018optimization}. 
In the next section we show that under additional conditions the estimator can 
converge asymptotically to the exact optimum.
\subsection{Example of the gradient algorithm}
In this section, we discuss the relationship between the proposed method and existing algorithms. Under specific 
conditions the proposed estimator coincides with prior methods.
\begin{itemize}
    \item Sample-wise separable losses. 
    If the loss decomposes per sample, 
    \[
    \mathcal{L}(\theta) = \sum_{n=1}^N f_n(\mathbf{y}_n(\theta)),
    \]
    then the Jacobian $\nabla_Y F$ is block-diagonal and the parameter gradient 
    decomposes into independent per-sample terms. In this case the mini-batch 
    update equals the ideal per-batch gradient `$G$', so the cached 
    surrogate `$H$' is unnecessary and the method reduces to standard 
    mini-batch SGD.\\
    \item Relation to SpecNet2 \cite{chen2022specnet2}. 
    For the SpecNet2 objective, practical batch-gradient schemes (local/batch-restricted, full-data, or neighborhood-based) correspond to different choices 
    of per-row output-gradients in our framework. Concretely: 
    \begin{itemize}
        \item Full-data evaluation (use rows computed from the entire dataset) yields the ideal gradient `$G$'.
        \item Batch-restricted / naive evaluation (use only rows from the current batch) yields the usual mini-batch gradient, which can be biased.
        \item Neighbor/localized evaluation (use cached or neighborhood-based rows) corresponds to the surrogate `$H$'. 
    \end{itemize}
    Actually, our approach is inspired by these revision and approximation strategies: we approximate costly global terms by cached or localized surrogates 
    while leveraging per-sample Jacobians computed by automatic differentiation. 
    The same principles apply to the gradient schemes Chen et al. propose for 
    SpecNet1.
\end{itemize}
\section{Local convergence analysis for the ideal per-batch gradient}
\label{sec:convergenceG}
In this section, we assume that $F$ is locally convex around $Y^{*}$. Within this local 
region, we analyze the convergence of the ideal per-batch gradient. Specifically, it will be shown that 
when $F$ is locally strongly convex around $Y^{*}$, the algorithm exhibits local linear 
convergence; whereas when $F$ is merely locally convex around $Y^{*}$, it exhibits local 
sublinear convergence.

To formalize this analysis, the section proceeds in two parts. Section~\ref{subsec:notation} establishes 
the required notation and standing assumptions; Section~\ref{subsec:theorems} then states and proves the 
convergence theorems under these assumptions.
\subsection{Notation and standing assumptions}
\label{subsec:notation}
In this section, we adopt the same notation as in sections~\ref{subsec:setup} and~\ref{subsec:grad}
(in particular $Y\in\mathbb{R}^{N\times d}$, $\nabla_Y F$, per-sample Jacobians $\nabla_{\theta} Y_{n}$, and the definition $G = \nabla_{\mathcal{B}}F(Y) \times \nabla_{\theta} Y$).
For later reference, define $$z_{i}:=\left((\nabla_{Y}F)_{i}\cdot\nabla_{\theta}Y_{i}\right)^{\T},$$ so that the 
(transposed) batch gradient used by the algorithm is
$ G^{\T} = \sum\limits_{i\in\mathcal{B}} z_{i}$. If we use $\theta^{+}$ to represent the updated parameters,
with a fixed step size $\eta>0$, the parameter update is
$$ \theta^{+} = \theta - \eta\cdot G^{\T}=\theta - \eta\cdot\sum\limits_{i\in\mathcal{B}} z_{i},$$
and to simplify the subsequent proofs,  we assume all mini-batches have the same size $b=|\mathcal{B}|$. The results could be extended to variable batch sizes.

Having specified notational conventions, we now state the standing assumptions used throughout the convergence analysis.

\textbf{Assumptions.}
\begin{itemize}
    \item($\text{A}_{1}$) $\|\nabla_Y F\|_{\F}$ locally uniformly bounded by $B_{0}$.
    \item($\text{A}_{2}$) The map $Y\mapsto\nabla_Y F(Y)$ is $L$-Lipschitz in Frobenius norm; i.e. for all $Y,Y'$ in the local region, 
	$$\|\nabla_Y F(Y) -\nabla_Y F(Y')\|_{\F}\leq L\|Y - Y'\|_{\F}.$$ (Equivalently, $\nabla_Y F$ has Lipschitz constant $L$.)
    \item($\text{A}_{3}$) $\|\nabla_{\theta}Y\|_{\F}$ is locally uniformly bounded by $B_{1}$.
    \item($\text{A}_{4}$) $\|\nabla^{2}_{\theta}Y\|_{\F}$ is locally uniformly bounded by $B_{2}$.
    \item($\text{A}_{5}$) The smallest eigenvalue of $\nabla_{\theta}Y\cdot(\nabla_{\theta}Y)^{\T}$ admits a locally uniform positive lower bound:
    $$\lambda_{\min}\left( \nabla_{\theta}Y\cdot(\nabla_{\theta}Y)^{\T}\right)\geq\lambda_{\min}>0.$$
\end{itemize}
\begin{remark} \label{rmk:limit}
Assumption $\text{A}_{5}$ requires a uniform positive lower bound on the smallest eigenvalue of 
$\nabla_{\theta}Y(\theta)\cdot\nabla_{\theta}Y(\theta)^{\T}$ (equivalently the smallest nonzero singular value of the empirical Jacobian) in the optimization region of interest. 
We emphasize three points. First, Assumption $\text{A}_{5}$ is a local non‑degeneracy condition: it is assumed to hold along the optimization path or within the neighborhood 
where the analysis applies, not necessarily globally over the entire parameter space. 
Second, similar non‑degeneracy has been rigorously or empirically observed in several overparameterized 
regimes; for example, NTK‑style results establish spectral stability for sufficiently wide networks at 
initialization and for short training times (see \cite{jacot2018neural,allen2019convergence} and follow‑ups). 
We therefore expect Assumption $\text{A}_{5}$  to hold approximately for wide architectures 
under standard initialization and training regimes, 
though extending such guarantees to long training horizons and 
finite widths may require additional assumptions. 
Third, Assumption $\text{A}_{5}$  is empirically verifiable: one can monitor the smallest singular value of a small empirical 
Jacobian (e.g., computed on a fixed held‑out subset) during training. 
If this quantity approaches zero for a particular model, the theoretical guarantees relying on Assumption $\text{A}_{5}$  
may not apply and empirical evaluation should be used. 
\end{remark}
\subsection{Main theorems and lemmas}
\label{subsec:theorems}
The lemmas and propositions stated below serve as the principal technical components for the convergence analysis of the algorithm. For brevity, we present their statements and brief description here and defer all proofs to Appendix~\ref{app:detailed_proofs}. Our main convergence results are as follows. Theorem~\ref{t34} establishes linear convergence of the algorithm under local strong convexity, and Theorem~\ref{t36} establishes $O(1/k)$ sublinear convergence under local convexity.

The following lemma relates the per‑step actual gradient estimator to the full gradient, providing a norm inequality that will be used repeatedly in the single‑step bounds.
\begin{lemma}
\label{l31}
    If Assumption $\text{A}_{3}$ holds, then $\|G\|_{\F}\leq B_{1}\|\nabla_{Y}F\|_{\F}$.
\end{lemma}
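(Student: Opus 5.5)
The plan is to write $G$ as a contraction of the masked output-gradient with the Jacobian tensor, and then apply a Cauchy--Schwarz inequality for the contraction $\times$. By definition,
\[
G=\nabla_{\mathcal{B}}F\bigl(Y(\theta)\bigr)\times\nabla_{\theta}Y(\theta),
\]
where $\nabla_{\mathcal{B}}F$ agrees with $\nabla_Y F$ on the rows in $\mathcal{B}$ and is zero elsewhere. Since it only discards rows, it satisfies
\[
\|\nabla_{\mathcal{B}}F\|_{\F}\le\|\nabla_Y F\|_{\F}.
\]

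\textbf{Contraction inequality.} The key step is the bound $\|A\times B\|_2\le\|A\|_{\F}\|B\|_{\F}$ for an $N\times d$ matrix $A$ and an $N\times d\times s$ tensor $B$. Flatten the index pair $(n,j)$ into a single index $p\in\{1,\dots,Nd\}$. Then $A$ becomes a vector $a\in\mathbb{R}^{Nd}$, $B$ becomes a matrix $M\in\mathbb{R}^{Nd\times s}$, and $A\times B=M^{\T}a$. Therefore
\[
\|A\times B\|_2=\|M^{\T}a\|_2\le\|M\|_2\|a\|_2\le\|M\|_{\F}\|a\|_2=\|B\|_{\F}\|A\|_{\F}.
\]
Alternatively, one can apply Cauchy--Schwarz to each component $m$ and sum the squares over $m$. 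Here the Frobenius norm of the tensor means the square root of the sum of squares of all its entries, which is consistent with the paper's use of $\|\nabla_{\theta}Y\|_{\F}$ in $\text{A}_{3}$.

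\textbf{Conclusion.} Combining the two bounds gives
\[
\|G\|_{\F}\le\|\nabla_{\mathcal{B}}F\|_{\F}\,\|\nabla_{\theta}Y\|_{\F}\le B_{1}\|\nabla_Y F\|_{\F},
\]
using Assumption $\text{A}_{3}$ in the last step. Because $G$ is a vector, its Frobenius norm is simply its Euclidean norm.

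The only genuine subtlety is being clear about the norm on the three-way tensor $\nabla_{\theta}Y$ and checking that the flattening step is legitimate. Everything else follows directly from the definitions.
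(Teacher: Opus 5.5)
Your argument is correct and is essentially the paper's. The paper writes $G=\sum_{i\in\mathcal{B}} z_i$, applies the triangle inequality, bounds each row term by $\|(\nabla_Y F)_i\|_2\,\|\nabla_\theta Y_i\|_{\F}$, and finishes with Cauchy--Schwarz over $i\in\mathcal{B}$; your flattening $G=M^{\T}a$ with $\|M^{\T}a\|_2\le\|M\|_{\F}\|a\|_2$ does the same Cauchy--Schwarz step globally in one line, under the same reading $\|\nabla_\theta Y\|_{\F}^2=\sum_n\|\nabla_\theta Y_n\|_{\F}^2$ of Assumption $\text{A}_3$.
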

Then, we explain the single-step descent situation through Lemma~\ref{l32} and Lemma~\ref{l33}.
\begin{lemma}
\label{l32}
    If Assumptions $\text{A}_{1}$-$\text{A}_{4}$ hold, denote $Y^{+}=Y(\theta^{+})$, then 
    there exists a constant $c_{2}\geq 0$
    such that
    $$ F(Y^{+})\leq F(Y)-\eta\cdot\sum\limits_{i=1}^{N}\langle z_{i}, G^{\T}\rangle + c_{2}\cdot\eta^{2}\|\nabla_{Y}F\|_{\F}^{2}.$$
    Specifically, one may take $c_{2}= \frac{1}{2}\left(LB_{1}^{2}+B_{0}B_{2}\right)c_{1}$, where $c_{1}= B_{1}^2$.
\end{lemma}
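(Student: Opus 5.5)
The plan is a second-order Taylor argument in two stages: first expand $F$ in $Y$, then expand $Y$ in $\theta$, and bound every remainder by $\|\Delta\theta\|^2$. Write $\Delta\theta=\theta^{+}-\theta=-\eta G^{\T}$ and $\Delta Y=Y^{+}-Y$.

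\textbf{Expansion in $Y$.} Since $\nabla_Y F$ is $L$-Lipschitz (Assumption $\text{A}_2$), the standard descent lemma gives
\[
F(Y^{+})\le F(Y)+\langle \nabla_Y F(Y),\Delta Y\rangle+\tfrac{L}{2}\|\Delta Y\|_{\F}^{2}.
\]

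\textbf{Expansion in $\theta$.} By Taylor's theorem with integral remainder,
\[
\Delta Y=\nabla_\theta Y(\theta)\,\Delta\theta+R,\qquad \|R\|_{\F}\le \tfrac{1}{2}B_2\|\Delta\theta\|^{2},
\]
using Assumption $\text{A}_4$ on the segment between $\theta$ and $\theta^{+}$. This requires the segment to stay in the local region, which holds for $\eta$ small or by the standing locality convention. By the mean value inequality and Assumption $\text{A}_3$, we also have $\|\Delta Y\|_{\F}\le B_1\|\Delta\theta\|$.

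\textbf{Linear term.} The first-order part is
\[
\langle\nabla_Y F,\nabla_\theta Y\,\Delta\theta\rangle=(\nabla_Y F\times\nabla_\theta Y)\,\Delta\theta=-\eta\sum_{i=1}^{N}\langle z_i,G^{\T}\rangle,
\]
because $\nabla_Y F\times\nabla_\theta Y=\sum_i z_i^{\T}$. This is exactly the middle term of the claim.

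\textbf{Remaining terms.} The Taylor remainder contributes $\langle\nabla_Y F,R\rangle\le B_0\cdot\tfrac12 B_2\|\Delta\theta\|^2$, by Cauchy--Schwarz and Assumption $\text{A}_1$. The quadratic term contributes $\tfrac{L}{2}B_1^2\|\Delta\theta\|^2$. Adding these,
\[
F(Y^{+})\le F(Y)-\eta\sum_{i}\langle z_i,G^{\T}\rangle+\tfrac12\left(LB_1^2+B_0B_2\right)\|\Delta\theta\|^2 .
\]

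\textbf{Closing the bound.} We have $\|\Delta\theta\|^2=\eta^2\|G\|_{\F}^2$, and Lemma~\ref{l31} gives $\|G\|_{\F}^2\le B_1^2\|\nabla_Y F\|_{\F}^2=c_1\|\nabla_Y F\|_{\F}^2$. Substituting yields the stated inequality with $c_2=\tfrac12(LB_1^2+B_0B_2)c_1$.

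\textbf{Main obstacle.} The delicate step is the tensor bookkeeping in the remainder $R$. We need $\|\tfrac12\nabla^2_\theta Y[\Delta\theta,\Delta\theta]\|_{\F}\le\tfrac12 B_2\|\Delta\theta\|^2$, which holds because the Frobenius norm of the third-order tensor dominates its operator norm. We must also make sure the Lipschitz bound in Assumption $\text{A}_2$ and the bounds in Assumptions $\text{A}_1$, $\text{A}_3$, $\text{A}_4$ apply along the whole segment, so the local-region membership of $\theta^{+}$ must be assumed or ensured.
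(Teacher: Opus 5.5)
Your proposal is correct and follows essentially the same route as the paper's proof. Both apply the $L$-smooth descent inequality in $Y$, expand $Y$ to second order in $\theta$ with the remainder bounded by $\text{A}_4$, identify the linear term with $-\eta\sum_i\langle z_i,G^{\T}\rangle$, and close with Lemma~\ref{l31}. You also state explicitly that the segment between $\theta$ and $\theta^{+}$ (and between $Y$ and $Y^{+}$) must stay in the local region, which the paper leaves implicit.
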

\begin{lemma}\label{l33}
    If Assumptions $\text{A}_{1}$-$\text{A}_{5}$ hold, while $\eta<\frac{|\mathcal{B}|}{N}\cdot\frac{\lambda_{\min}}{c_{2}}$,
    there exists a constant $c_{3} > 0$,
    such that
    $$\E_{\mathcal{B}}[F(Y^{+})]\leq F(Y)-c_{3}\cdot\eta\|\nabla_{Y}F\|_{\F}^{2}.$$
    Specifically, one may take  $c_{3}=\left(\frac{|\mathcal{B}|}{N}\lambda_{\min}-c_{2}\eta\right)$. 
\end{lemma}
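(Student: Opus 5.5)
We start from Lemma~\ref{l32} and take expectations over the random batch $\mathcal{B}$. Lemma~\ref{l32} is deterministic given $\theta$, and its last term $c_{2}\eta^{2}\|\nabla_{Y}F\|_{\F}^{2}$ does not depend on $\mathcal{B}$. So the work reduces to computing the expectation of the inner-product term and bounding it from below.

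The key observation is that $\sum_{i=1}^{N} z_i = (\nabla_\theta\mathcal{L})^{\T}$ is the full (transposed) gradient. Hence
\[
\sum_{i=1}^{N}\langle z_i, G^{\T}\rangle = \Big\langle \sum_{i=1}^N z_i,\ \sum_{j\in\mathcal{B}} z_j\Big\rangle .
\]
We assume batches are drawn uniformly among subsets of size $b$, so each index lies in $\mathcal{B}$ with probability $b/N$. Then $\E_{\mathcal{B}}[G^{\T}] = \frac{b}{N}\sum_{j=1}^N z_j$. By linearity the expected inner product equals $\frac{b}{N}\|\sum_{i} z_i\|^{2} = \frac{b}{N}\|\nabla_{Y}F\times\nabla_{\theta}Y\|^{2}$.

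Next we bound $\|\nabla_{Y}F\times\nabla_\theta Y\|^{2}$ from below using Assumption $\text{A}_5$. Flatten $\nabla_Y F$ into a vector $v\in\mathbb{R}^{Nd}$ and $\nabla_\theta Y$ into the $Nd\times s$ matrix $J$. The contraction is then $v^{\T}J$, and
\[
\|v^{\T}J\|^{2} = v^{\T} J J^{\T} v \geq \lambda_{\min}\|v\|^{2} = \lambda_{\min}\|\nabla_Y F\|_{\F}^{2},
\]
since $JJ^{\T} = \nabla_\theta Y\cdot(\nabla_\theta Y)^{\T}$ has smallest eigenvalue at least $\lambda_{\min}$. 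Combining the pieces gives
\[
\E_{\mathcal{B}}[F(Y^+)] \le F(Y) - \eta\Big(\tfrac{b}{N}\lambda_{\min} - c_2\eta\Big)\|\nabla_Y F\|_{\F}^{2},
\]
which is the claim with $c_3=\frac{|\mathcal{B}|}{N}\lambda_{\min}-c_2\eta$. The step-size condition $\eta<\frac{|\mathcal{B}|}{N}\frac{\lambda_{\min}}{c_2}$ is exactly what makes $c_3>0$.

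The main subtlety is the expectation step. We need $\E_{\mathcal{B}}$ to act only on the selection indicator $\mathbb{1}[j\in\mathcal{B}]$. The $z_j$ are evaluated at the current $\theta$, which is fixed before the batch is drawn, so they are not random under $\E_{\mathcal{B}}$; this is where uniform sampling (equal inclusion probability $b/N$) is used. A second point to check is that $\text{A}_5$ is stated for $\nabla_\theta Y(\nabla_\theta Y)^{\T}$, an $Nd\times Nd$ matrix. This implicitly requires $s\ge Nd$ (overparameterization), consistent with Remark~\ref{rmk:limit}. The flattening convention must match the contraction $\times$ so that the quadratic-form identity above holds exactly.
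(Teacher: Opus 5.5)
Your proposal is correct and follows essentially the same route as the paper. You take the conditional expectation of the bound in Lemma~\ref{l32} over the batch, use $\E_{\mathcal{B}}[G^{\T}]=\frac{|\mathcal{B}|}{N}\sum_{j} z_j$ to turn the cross term into the quadratic form $\hat{g}\,\nabla_{\theta}Y(\nabla_{\theta}Y)^{\T}\hat{g}^{\T}$ (your $v^{\T}JJ^{\T}v$), and bound it below by $\lambda_{\min}\|\nabla_{Y}F\|_{\F}^{2}$ via $\text{A}_5$. Your side remark that $\text{A}_5$ forces $s\ge Nd$ is also correct; the paper leaves it implicit in Remark~\ref{rmk:limit}.
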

\begin{remark}\label{r31}
Up to and including Lemma~\ref{l33} we have not used local convexity of $F$ in $Y$. Hence Lemma~\ref{l33} already yields the standard stochastic stationarity guarantee.

Summing the inequality in Lemma~\ref{l33} over $k=0,\dots,T$ and taking total expectation gives
    \begin{equation*}
    \sum\limits_{k=0}^{T} \E\big[\|\nabla_{Y}F(Y(\theta_{k}))\|_{\F}^{2}\big]
    = \frac{1}{c_{3}\cdot\eta}\cdot \bigg(F(Y(\theta_{0})) - \E\big[F(Y(\theta_{T+1}))\big]\bigg)
    \leq \frac{1}{c_{3}\cdot\eta}\cdot F(Y(\theta_{0})).
    \end{equation*}
    Thus
    \begin{equation*}
        \min_{k = 0,\cdots, T} \E\big[\|\nabla_{Y}F(Y(\theta_{k}))\|_{\F}^{2}\big] 
        \leq \frac{F(Y(\theta_{0}))}{c_{3}\cdot\eta\cdot (T+1)}, 
    \end{equation*}
    which implies $\min_{k\le T}\E\big[\|\nabla_{Y}F(Y(\theta_{k}))\|_{\F}\big]=O(T^{-1/2})$. 
    The detailed step‑by‑step derivation is given in Appendix~\ref{d_r31}.
\end{remark}
With the preceding lemmas in place, we now state the main convergence results.
\begin{theorem}\label{t34}
    Suppose $F$ is locally strongly convex around $Y^{*}$ with strong convexity constant $c_{Y}>0$.
    If Assumptions $\text{A}_{1}$-$\text{A}_{5}$ hold and the step size satisfies 
    $$\eta<\min\left\{\frac{|\mathcal{B}|}{N}\cdot\frac{\lambda_{\min}}{c_{2}},\ \frac{1}{2c_{3}c_{Y}}\right\},$$ 
    then while the iterates remain in a neighborhood of $Y^{*}$ where the local assumptions hold, the expected optimality gap converges linearly to zero:
    there exist constants $C>0$ and $\rho\in (0,1)$ such that
    \begin{equation*}
        \E\big[F(Y(\theta_{k}))\big] - F(Y^{*}) \leq C\rho^{k},
    \end{equation*} 
    where $\E\big[F(Y(\theta_{k}))\big]$ denotes 
    $\E_{\mathcal{B}_{0},\cdots,\mathcal{B}_{k-1}}\big[F(Y(\theta_{k}))\big]$ and $Y^{*}$ is the minimum point of $F$.
\end{theorem}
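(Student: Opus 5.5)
The plan is to combine the expected single-step descent of Lemma~\ref{l33} with a Polyak--Łojasiewicz-type inequality in $Y$ that follows from local strong convexity. This turns the descent into a contraction of the optimality gap, which we then iterate with the tower property of conditional expectation.

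\textbf{Step 1: the PL inequality in $Y$.} Local strong convexity of $F$ around $Y^{*}$ with constant $c_Y$ gives
\[
F(Y^{*}) \ge F(Y) + \langle \nabla_Y F(Y), Y^{*}-Y\rangle + \tfrac{c_Y}{2}\|Y^{*}-Y\|_{\F}^2 .
\]
Minimizing the right-hand side over the free variable $Y^{*}$ then yields
\[
\|\nabla_Y F(Y)\|_{\F}^2 \ge 2c_Y\bigl(F(Y)-F(Y^{*})\bigr).
\]
This uses that $Y^{*}$ is the minimizer of $F$ in the local region, so that the gap is nonnegative.

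\textbf{Step 2: one-step contraction.} Condition on $\theta_k$ and use the first step-size restriction, $\eta<\frac{|\mathcal{B}|}{N}\frac{\lambda_{\min}}{c_2}$. Lemma~\ref{l33} then applies with $c_3>0$. Subtracting $F(Y^{*})$ and inserting Step 1 gives
\[
\E_{\mathcal{B}_k}\bigl[F(Y(\theta_{k+1}))\bigr]-F(Y^{*}) \le (1-2c_3c_Y\eta)\bigl(F(Y(\theta_k))-F(Y^{*})\bigr).
\]
Set $\rho:=1-2c_3c_Y\eta$. The second restriction, $\eta<\frac{1}{2c_3c_Y}$, gives $\rho>0$, and $c_3,c_Y,\eta>0$ give $\rho<1$. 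Note that $c_3$ itself depends on $\eta$, but only through the explicit formula of Lemma~\ref{l33}. The condition is therefore to be read with that same $c_3$, and it remains a well-defined requirement on $\eta$.

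\textbf{Step 3: iteration.} Take the total expectation over $\mathcal{B}_0,\dots,\mathcal{B}_k$ and use the tower property, since $\mathcal{B}_k$ is drawn independently of the past. Induction on $k$ then gives
\[
\E[F(Y(\theta_k))]-F(Y^{*})\le \rho^k\bigl(F(Y(\theta_0))-F(Y^{*})\bigr).
\]
So the statement holds with $C=F(Y(\theta_0))-F(Y^{*})$. If this gap is zero, any $C>0$ works.

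\textbf{Main obstacle.} The delicate point is that Lemmas~\ref{l32}--\ref{l33} and Step 1 are only local. They require $Y(\theta_k)$ and $Y(\theta_{k+1})$ to stay in the region where A$_1$--A$_5$ and strong convexity hold. The theorem's hypothesis ``while the iterates remain in a neighborhood'' is what lets us apply the conditional bound at each step: on that event every inequality above is valid pathwise before taking expectations. I would state this explicitly and treat the PL step as pathwise.

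A secondary check is the $\lambda_{\min}$ factor in Lemma~\ref{l33}. It must come from expanding $\E_{\mathcal{B}}\sum_i\langle z_i,G^{\T}\rangle=\frac{b}{N}\|\nabla_Y F\times\nabla_\theta Y\|^2$ and then bounding this below by $\frac{b}{N}\lambda_{\min}\|\nabla_YF\|_{\F}^2$ via A$_5$. Since we take that lemma as given, no further work is needed there.
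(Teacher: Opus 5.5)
Your proposal is correct and follows the paper's proof essentially step for step. The paper likewise combines the PL-type inequality $\|\nabla_Y F\|_{\F}^2 \ge 2c_Y\bigl(F(Y)-F(Y^{*})\bigr)$, obtained from local strong convexity, with the conditional descent of Lemma~\ref{l33}, iterates the contraction factor $\rho = 1-2c_3c_Y\eta$ via the tower property, and takes $C = F(Y(\theta_0))-F(Y^{*})$; your remarks on the $\eta$-dependence of $c_3$ and on the zero-gap case are small refinements that the paper leaves implicit.
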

\begin{corollary}\label{coro35}
    Under the assumptions of Theorem~\ref{t34}, while the iterates remain in the neighborhood of $Y^{*}$, the optimality gap $F(Y(\theta_{k})) - F(Y^{*})$ converges to zero almost surely at a linear rate. More precisely,
    for almost every sample path $\omega$, there exist  $\rho(\omega) \in (0,1)$ and $k_{0}(\omega) \in \mathbb{N}$ 
    such that 
    \begin{equation*}
        F(Y(\theta_{k})) - F(Y^{*}) \leq \rho(\omega)^k, \quad \forall k \geq k_{0}(\omega).
    \end{equation*}
\end{corollary}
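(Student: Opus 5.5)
The plan is to deduce the almost-sure statement from Theorem~\ref{t34} by a Markov inequality combined with the Borel--Cantelli lemma. Throughout, work on the event that the iterates stay in the neighborhood of $Y^{*}$; this is how the statement is phrased.

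\textbf{Step 1: nonnegativity.} Set $X_k := F(Y(\theta_k)) - F(Y^{*})$. Since $Y^{*}$ is the minimum point of $F$ in the local region, $X_k\ge 0$. Theorem~\ref{t34} then gives $\E[X_k]\le C\rho^{k}$ with $\rho\in(0,1)$.

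\textbf{Step 2: an intermediate rate.} Fix $\tilde\rho$ with $\rho<\tilde\rho<1$; a concrete choice is $\tilde\rho=\sqrt{\rho}$. By Markov's inequality,
\[
\mathbb{P}\big(X_k>\tilde\rho^{\,k}\big)\le \frac{\E[X_k]}{\tilde\rho^{\,k}}\le C\Big(\frac{\rho}{\tilde\rho}\Big)^{k}.
\]
The right-hand side is summable in $k$ because $\rho/\tilde\rho<1$.

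\textbf{Step 3: Borel--Cantelli.} The first Borel--Cantelli lemma now shows that, almost surely, only finitely many of the events $\{X_k>\tilde\rho^{\,k}\}$ occur. So for almost every $\omega$ there is $k_0(\omega)$ with $X_k(\omega)\le\tilde\rho^{\,k}$ for all $k\ge k_0(\omega)$. Taking $\rho(\omega):=\tilde\rho$ gives the claim; here the rate is even deterministic. In particular $X_k\to0$ almost surely at a linear rate.

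\textbf{Main obstacle.} The one delicate point is the conditioning on the iterates remaining in the neighborhood. Theorem~\ref{t34} is stated only "while the iterates remain in a neighborhood". Two readings are possible:
\begin{itemize}
\item If the expectation bound is understood on that event, the same Markov/Borel--Cantelli argument applies to $X_k\mathbf{1}_{\{\text{stay}\}}$, which is nonnegative and still has geometrically decaying expectation.
\item If the event has probability less than one, the conclusion holds for almost every $\omega$ in it.
\end{itemize}
I would state this explicitly rather than attempt to prove confinement. Otherwise the argument is routine, and no summability issue arises because $\tilde\rho$ lies strictly between $\rho$ and $1$.
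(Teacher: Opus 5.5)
Your proposal is correct and follows the paper's argument: Markov's inequality applied to the expectation bound of Theorem~\ref{t34} at a base strictly between $\rho$ and $1$ (the paper uses $\rho(1+\epsilon)$, you use $\sqrt{\rho}$), followed by the first Borel--Cantelli lemma. The paper first runs a Doob supermartingale and Fatou argument to obtain almost sure convergence, but the Borel--Cantelli tail bound already implies that, so your shorter version loses nothing.
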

\begin{theorem}
\label{t36}
    Suppose $F$ is locally convex around $Y^{*}$.
    If Assumptions $\text{A}_{1}$-$\text{A}_{5}$ hold and the step size satisfies $\eta<\frac{|\mathcal{B}|}{N}\cdot\frac{\lambda_{\min}}{c_{2}}$, 
    then while the iterates remain in a neighborhood of $Y^{*}$ where the local assumptions hold, the expected optimality gap converges sublinearly to zero at a rate of $O(1/k)$: there exists a constant $C>0$ such that
    \begin{equation*}
        \E\big[F(Y(\theta_{k}))\big] - F(Y^{*}) \leq \frac{C}{k},
    \end{equation*}
    where $\E\big[F(Y(\theta_{k}))\big]$ denotes 
    $\E_{\mathcal{B}_{0},\cdots,\mathcal{B}_{k-1}}\big[F(Y(\theta_{k}))\big]$ and $Y^{*}$ is a minimum point of $F$.
\end{theorem}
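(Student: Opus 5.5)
We prove Theorem~\ref{t36} with the standard convex descent-lemma argument, using Lemma~\ref{l33} as the one-step decrease. Write $\Delta_k := \E[F(Y(\theta_k))] - F(Y^{*})$ and $Y_k := Y(\theta_k)$. Fix $\eta<\frac{|\mathcal{B}|}{N}\frac{\lambda_{\min}}{c_2}$, so that $c_3>0$. Lemma~\ref{l33}, applied conditionally on $\mathcal{B}_0,\dots,\mathcal{B}_{k-1}$, gives
\[
\E_{\mathcal{B}_k}[F(Y_{k+1})]\le F(Y_k)-c_3\eta\|\nabla_Y F(Y_k)\|_{\F}^2 .
\]
In particular, $\Delta_k$ is nonincreasing.

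\textbf{Step 1: convexity converts the gradient norm into the gap.} By local convexity of $F$ in $Y$, and because the iterates stay in the neighborhood, we have
\[
F(Y_k)-F(Y^{*})\le \langle \nabla_Y F(Y_k), Y_k-Y^{*}\rangle \le \|\nabla_Y F(Y_k)\|_{\F}\,\|Y_k-Y^{*}\|_{\F}\le R\,\|\nabla_Y F(Y_k)\|_{\F}.
\]
Here $R$ is the diameter (radius) of the local neighborhood around $Y^{*}$ on which the assumptions hold. This is the point where the hypothesis ``while the iterates remain in a neighborhood'' is used: it supplies a uniform bound $\|Y_k-Y^{*}\|_{\F}\le R$. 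Unlike deterministic gradient descent, we cannot obtain monotonicity of $\|Y_k-Y^{*}\|$, so we rely on the neighborhood bound. Squaring gives
\[
\|\nabla_Y F(Y_k)\|_{\F}^2\ge (F(Y_k)-F(Y^{*}))^2/R^2 .
\]

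\textbf{Step 2: recursion in expectation.} Take total expectation and apply Jensen's inequality, $\E[X^2]\ge(\E X)^2$, to the nonnegative gap. This yields
\[
\Delta_{k+1}\le \Delta_k-\frac{c_3\eta}{R^2}\Delta_k^2 .
\]
If some $\Delta_k=0$, we are done by monotonicity. Otherwise, divide by $\Delta_k\Delta_{k+1}$ and use $\Delta_{k+1}\le\Delta_k$ to get
\[
\frac{1}{\Delta_{k+1}}\ge\frac{1}{\Delta_k}+\frac{c_3\eta}{R^2}\frac{\Delta_k}{\Delta_{k+1}}\ge \frac{1}{\Delta_k}+\frac{c_3\eta}{R^2}.
\]
Telescoping gives
\[
\frac{1}{\Delta_k}\ge \frac{1}{\Delta_0}+\frac{k c_3\eta}{R^2},
\]
hence
\[
\Delta_k\le \frac{R^2}{c_3\eta\,k}.
\]
So we may take $C=R^2/(c_3\eta)$, or the slightly sharper $C=\min\{R^2/(c_3\eta),\dots\}$ with the $1/\Delta_0$ term retained.

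\textbf{Main obstacle.} The delicate step is Step 1. It needs a uniform bound on $\|Y_k-Y^{*}\|_{\F}$ along every sample path so that the inequality holds pointwise before taking expectations. We resolve this by making the local neighborhood bounded, with radius $R$, and invoking the standing proviso that iterates remain in it. We must also ensure that Jensen's inequality is applied in the right direction: we need $(\E[\text{gap}])^2\le \E[\text{gap}^2]$, which holds. A secondary point is that Lemma~\ref{l33} requires $\E_{\mathcal{B}}$ conditional on the current iterate, so we use the tower property over $\mathcal{B}_0,\dots,\mathcal{B}_{k}$.
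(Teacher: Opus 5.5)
Your proposal is correct and follows essentially the same argument as the paper: local convexity on the bounded neighborhood gives $F(Y_k)-F(Y^{*})\le R\|\nabla_Y F(Y_k)\|_{\F}$, Jensen combined with Lemma~\ref{l33} gives $\Delta_{k+1}\le\Delta_k-\frac{c_3\eta}{R^2}\Delta_k^2$, and the reciprocal telescoping (using $\Delta_{k+1}\le\Delta_k$) gives $C=R^2/(c_3\eta)$. The only cosmetic difference is where Jensen enters: you apply it to the squared gap, while the paper applies it as $\E\|\nabla_Y F\|_{\F}\le(\E\|\nabla_Y F\|_{\F}^2)^{1/2}$, and both lead to the same recursion.
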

\begin{corollary}\label{coro37}
    Under the assumptions of Theorem~\ref{t36}, while the iterates remain in the neighborhood of $Y^{*}$, the following hold:
    \begin{itemize}
        \item $F(Y(\theta_{k})) - F(Y^{*})$ converges to zero almost surely.
        \item The sequence $F(Y(\theta_{k}))$ admits almost surely convergent random subsequences at rate of $O(1/k)$, i.e., for almost every sample path
        $\omega$, there exist a subsequence ${k_n(\omega)}$ and a constant $C(\omega)$
        such that   
        \begin{equation*}
        F(Y(\theta_{k_{n}})) - F(Y^{*}) \leq \frac{C(\omega)}{k_{n}} , \quad \forall\ n \in \mathbb{N}.
        \end{equation*}
    \end{itemize}
\end{corollary}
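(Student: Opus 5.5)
The statement to prove is Corollary~\ref{coro37}. The plan is to work with the nonnegative optimality gap $e_k := F(Y(\theta_k)) - F(Y^{*})$ and combine two ingredients: the one-step supermartingale inequality of Lemma~\ref{l33}, and the expected-rate bound of Theorem~\ref{t36}. Throughout we stay on the event that the iterates remain in the neighborhood, as in the statement.

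\textbf{Step 1 (supermartingale and a.s.\ limit).} Let $\mathcal{F}_k$ be the $\sigma$-algebra generated by $\mathcal{B}_0,\dots,\mathcal{B}_{k-1}$. Lemma~\ref{l33}, with $\eta<\frac{|\mathcal{B}|}{N}\frac{\lambda_{\min}}{c_2}$ so that $c_3>0$, gives
\[
\E[e_{k+1}\mid\mathcal{F}_k]\le e_k - c_3\eta\|\nabla_Y F(Y(\theta_k))\|_{\F}^2\le e_k .
\]
Hence $(e_k)$ is a nonnegative supermartingale. By the supermartingale convergence theorem, $e_k\to e_\infty\ge 0$ almost surely. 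By Theorem~\ref{t36}, $\E[e_k]\le C/k\to 0$. Fatou's lemma then gives $\E[e_\infty]\le\liminf_k\E[e_k]=0$, so $e_\infty=0$ almost surely. This proves the first bullet.

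\textbf{Step 2 (a.s.\ $O(1/k)$ along a subsequence).} Fix any $\epsilon>0$ and consider $\sum_k \frac{k^{1-\epsilon}}{k^{2}}\,\E[k\,e_k]$. The cleaner route is through Markov's inequality.
\begin{itemize}
\item Markov's inequality gives $P(k e_k > M)\le C/M$ for all $k$.
\item Reverse Fatou then gives $P(\limsup_k\{k e_k\le M\})\ge \limsup_k P(k e_k\le M)\ge 1-C/M$.
\item So the event $A_M:=\{k e_k\le M \text{ infinitely often}\}$ has probability at least $1-C/M$.
\end{itemize}
The events $A_M$ increase in $M$, so $P(\bigcup_M A_M)=1$. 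For $\omega\in A_M$ we choose the random subsequence $k_n(\omega)$ of indices with $k e_k\le M$ and set $C(\omega)=M$. This yields $e_{k_n}\le C(\omega)/k_n$ for all $n$, which is the second bullet.

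\textbf{Main obstacle.} Step 2 is the delicate step. $\E[e_k]=O(1/k)$ is not summable, so Borel--Cantelli cannot give an almost-sure bound for all large $k$; this is exactly why only a subsequence is claimed. The reverse-Fatou argument sidesteps the issue but must be stated carefully, since the subsequence is random.

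As an alternative for Step 2, one can use that $\sum_k \E[e_k]/k<\infty$. This gives $\sum_k e_k/k<\infty$ almost surely, hence $\liminf_k k e_k\cdot\frac{\log k}{k}\cdots$, which is weaker. I would therefore keep the Markov/reverse-Fatou argument.

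A minor point concerns the neighborhood condition. It is handled by working on the event where the iterates stay local, or equivalently by stopping the process at the exit time. The stopped process is still a nonnegative supermartingale, so Step 1 goes through unchanged.
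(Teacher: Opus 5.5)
Your argument is correct. Step~1 is the paper's argument: Doob's supermartingale convergence theorem, then Fatou's lemma against the bound of Theorem~\ref{t36} to identify the limit. Your filtration $\sigma(\mathcal{B}_0,\dots,\mathcal{B}_{k-1})$ is in fact the appropriate one.

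Step~2 uses a different device. The paper sets $X_k=k\,e_k\ge 0$ and applies Fatou's lemma directly, $\E[\liminf_k X_k]\le\liminf_k\E[X_k]\le C$. Hence $\liminf_k X_k<\infty$ almost surely, and the random subsequence is read off from the liminf. You instead apply Markov's inequality to each $X_k$, then reverse Fatou to the events $\{X_k\le M\}$, obtaining $P(A_M)\ge 1-C/M$ before taking the union over $M$. Both routes establish the same fact, $\liminf_k k\,e_k<\infty$ almost surely. The paper's route is one line and gives slightly more: $\liminf_k k\,e_k$ is integrable with mean at most $C$, which by Markov implies your bound $P(A_M^c)\le C/M$. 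Yours only controls the tail of the random constant.

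You should delete the abandoned first sentence of Step~2 and the garbled ``alternative'' at the end, since they contribute nothing.

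Your closing claim that stopping at the exit time leaves Step~1 unchanged is too quick. The stopped gap freezes at its (generally positive) value at exit, so its expectation need not tend to $0$, and Theorem~\ref{t36}'s bound is not available for it. It is cleaner to keep the paper's standing hypothesis that all iterates remain in the neighborhood.
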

\begin{remark}
\label{r32}
    By applying Markov's inequality to the $O(1/k)$ expectation bound of Theorem~\ref{t36} and invoking the Borel–Cantelli lemma, one obtains deterministic (albeit sparse) subsequences along which 
    almost sure polynomial rates arbitrarily close to $1/k$ hold. Concretely, for any $\epsilon>0$ the deterministic subsequence $k_{n}(\epsilon):=\lfloor n^{2/\epsilon}\rfloor+1$ 
    satisfies, for almost every $\omega$, the bound
    \[
    F(Y(\theta_{k_n(\epsilon)}))(\omega) - F(Y^{*}) \leq C(\omega, \epsilon) k_{n}(\epsilon)^{-1+\epsilon}, \quad \forall n\in\mathbb{N}.
    \]
    for some path‑dependent constant $C(\omega,\epsilon)$. This subsequence construction is mainly of theoretical interest; nevertheless, it offers a heuristic for practical verification of near‑optimal iterates in long runs. Full discussion and proof are in Appendix~\ref{d_r32}.
\end{remark}
\section{The local convergence for the cached surrogate gradient}
In this section we show that approximating the ideal per-batch gradient by the cached surrogate gradient preserves the same local convergence behavior derived in Section~\ref{sec:convergenceG}. Specifically, we prove that the cached surrogate gradient inherits the same linear and $O(1/k)$ sublinear convergence guarantees, albeit under slightly more restrictive step-size conditions.
The section is organized in two parts: Section~\ref{subsec:notation_2} introduces additional notation and standing assumptions; Section~\ref{subsec:theorems_2} 
states and proves the corresponding convergence results.
\subsection{Notation and standing assumptions}
\label{subsec:notation_2}
We adopt the notation and assumptions from Section~\ref{sec:convergenceG} (subsections~\ref{subsec:notation} and~\ref{subsec:grad}). Let $\theta_k$ denote the 
parameters at iteration $k$, and, for brevity, set 
$$\theta \coloneqq \theta_k,\quad \theta^{+} \coloneqq \theta_{k+1}, \quad \theta^{-} \coloneqq \theta_{k-1},$$
and correspondingly
$$Y \coloneqq Y(\theta),\quad Y^{+} \coloneqq Y(\theta^{+}),\quad Y^{-} \coloneqq Y(\theta^{-}).$$
Furthermore, we have $\tilde{Y}$ at step $k$ and $\tilde{Y}^{+}$ at step $k+1$ satisfying
\[ (\tilde{Y})_{n} = \left\{ 
     \begin{array}{ll}
        (Y)_{n} & \text{if }  n \in \mathcal{B}^{-} \\
       (\tilde{Y}^{-})_{n} & \text{if } n \notin \mathcal{B}^{-}
     \end{array}
\right. \]
and 
\[ (\tilde{Y}^{+})_{n} = \left\{ 
     \begin{array}{ll}
        (Y^{+})_{n} & \text{if }  n \in \mathcal{B} \\
       (\tilde{Y})_{n} & \text{if } n \notin \mathcal{B}
     \end{array},
\right. \]
where $\mathcal{B}^{-}$ is the mini-batch sampled at the step following $\theta^{-}$, and $\mathcal{B}$ is the 
mini-batch sampled at the step following $\theta$.

For notational convenience define 
$$p_{i}=\left((\widetilde{\nabla_{Y}F})_{i}\cdot\nabla_{\theta}Y_{i}\right)^{\T}\in\mathbb{R}^{s}, \quad \widetilde{\nabla_{Y}F} = \widetilde{\nabla_{Y}F}(\tilde{Y},Y).$$
Then $ H^{\T} = \sum\limits_{i\in\mathcal{B}} p_{i}$ and with fixed step size $\eta>0$ the parameter update is 
$$ \theta^{+} = \theta - \eta\cdot H^{\T}=\theta - \eta\cdot\sum\limits_{i\in\mathcal{B}} p_{i},$$
As in Section~\ref{sec:convergenceG}, we assume all mini-batches have a common size $b=|\mathcal{B}|$.

Treating $\widetilde{\nabla_Y F}(\tilde{Y},Y)$ as a function of $\tilde{Y}$ with $Y$ held fixed, we consider its first-order approximation. Using the integral form of Taylor's theorem, we have
\begin{equation*}
\widetilde{\nabla_{Y}F}(\tilde{Y},Y) = \widetilde{\nabla_{Y}F}(Y,Y) + \int_{0}^{1}\mathcal{T}(Y+t(\tilde{Y}-Y))[\tilde{Y}-Y]\mathrm{d}t = \nabla_{Y}F + \int_{0}^{1}\mathcal{T}(Y+t(\tilde{Y}-Y))[\tilde{Y}-Y]\mathrm{d}t,  
\end{equation*} 
where for each $\Theta$, $\mathcal{T}(\Theta)$ is a fourth-order tensor and $\mathcal{T}(\Theta)[\Delta]$ denotes the contraction of this tensor with the matrix 
$\Delta=\tilde{Y}-Y$ (i.e. the linear operator induced by the tensor acting on matrices). 

Complementing Assumptions $\text{A}_{1}$-$\text{A}_{5}$ established from Section~\ref{sec:convergenceG}, 
we introduce the following additional standing assumption for the analysis in this section:
\begin{itemize}
    \item($\text{A}_{6}$) The tensor operator norm (Frobenius-induced) of $\mathcal{T}$ is locally uniformly bounded: there exists $B_{3}>0$ 
    such that for all relevant $\Theta$,
    $$\|\mathcal{T}(\Theta)\|_{\F}\leq B_{3}.$$
    In particular, this bound controls the magnitude of the linearization error $\mathcal{T}(\Theta)[\tilde{Y}-Y]$ via
    $$\|\mathcal{T}(\Theta)[\tilde{Y}-Y]\|_{\F}\leq B_{3}\|[\tilde{Y}-Y]\|_{\F}.$$
\end{itemize}
\subsection{Main theorems and lemmas}
\label{subsec:theorems_2}
The next group of lemmas are analogues of Lemmas~\ref{l31}--\ref{l33} for the surrogate gradient $\widetilde{\nabla_{Y}F}$ and the corresponding batch quantity $H$.
Full details are given in Appendix~\ref{d_l41}--\ref{d_l46}. Our main convergence results for the surrogate gradient are as follows. Theorem~\ref{t47} establishes linear convergence under local strong convexity, and Theorem~\ref{t49} establishes $O(1/k)$ sublinear convergence under local convexity.
\begin{lemma}\label{l41}
    If Assumption $\text{A}_{3}$ holds, then $\|H\|_{\F}\leq B_{1}\|\widetilde{\nabla_{Y}F}\|_{\F}$.
\end{lemma}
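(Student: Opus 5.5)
The argument copies the proof of Lemma~\ref{l31}, with the exact output-gradient replaced by the surrogate $\widetilde{\nabla_{Y}F}$. First write $H$ as a contraction of the masked surrogate with the full Jacobian:
\[
H=\widetilde{\nabla_{\mathcal{B}}F}\times\nabla_{\theta}Y=\sum_{i\in\mathcal{B}}(\widetilde{\nabla_{Y}F})_{i}\cdot\nabla_{\theta}Y_{i}.
\]
Then view the contraction $A\times B$ as a matrix--vector product. Flatten $A\in\mathbb{R}^{N\times d}$ into a vector $\mathrm{vec}(A)\in\mathbb{R}^{Nd}$, and reshape $B=\nabla_{\theta}Y$ into an $Nd\times s$ matrix $J$ by stacking the per-sample Jacobians $\nabla_{\theta}Y_{n}\in\mathbb{R}^{d\times s}$ vertically. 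With these identifications, $(A\times B)^{\T}=J^{\T}\mathrm{vec}(A)$.

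Next apply Cauchy--Schwarz in the form $\|J^{\T}v\|_{2}\le\|J\|_{2}\|v\|_{2}\le\|J\|_{\F}\|v\|_{2}$. The Frobenius norm is invariant under reshaping, so $\|J\|_{\F}=\|\nabla_{\theta}Y\|_{\F}\le B_{1}$ by Assumption~$\text{A}_{3}$, and $\|\mathrm{vec}(A)\|_{2}=\|A\|_{\F}$. Taking $A=\widetilde{\nabla_{\mathcal{B}}F}$ and using that $H\in\mathbb{R}^{1\times s}$ (so $\|H\|_{\F}$ is the Euclidean norm of $H^{\T}$) gives
\[
\|H\|_{\F}\le B_{1}\,\|\widetilde{\nabla_{\mathcal{B}}F}\|_{\F}.
\]

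Finally, masking only zeroes out rows not indexed by $\mathcal{B}$. Hence $\|\widetilde{\nabla_{\mathcal{B}}F}\|_{\F}^{2}=\sum_{i\in\mathcal{B}}\|(\widetilde{\nabla_{Y}F})_{i}\|_{2}^{2}\le\|\widetilde{\nabla_{Y}F}\|_{\F}^{2}$, which yields the claim $\|H\|_{\F}\le B_{1}\|\widetilde{\nabla_{Y}F}\|_{\F}$.

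No step is a real obstacle. The only point that needs care is that the contraction $\times$ acts exactly as $J^{\T}\mathrm{vec}(\cdot)$ under a consistent index ordering, so the Frobenius bound on the tensor $\nabla_{\theta}Y$ transfers to the operator bound. The argument uses nothing specific to the surrogate: the dependence on the cache $\tilde{Y}$ enters only through the row vector $\widetilde{\nabla_{Y}F}$, which is fixed when the contraction is taken. The structure is therefore identical to Lemma~\ref{l31}.
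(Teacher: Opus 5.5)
Your proof is correct, but it organizes the estimate differently from the paper.

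The paper works row by row. It applies the triangle inequality to $H^{\T}=\sum_{i\in\mathcal{B}}p_i$ and bounds each term by $\|(\widetilde{\nabla_{Y}F})_{i}\|_{2}\,\|\nabla_{\theta}Y_{i}\|_{\F}$. It then applies Cauchy--Schwarz over $i\in\mathcal{B}$, which gives the intermediate bound $\|\widetilde{\nabla_{\mathcal{B}}F}\|_{\F}\,\|J_{\mathcal{B}}\|_{\F}$, where $J_{\mathcal{B}}$ keeps only the row blocks $\nabla_{\theta}Y_{i}$ with $i\in\mathcal{B}$. Finally it enlarges both batch sums to full sums.

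You instead vectorize once and use the single operator bound $\|J^{\T}v\|_{2}\le\|J\|_{2}\|v\|_{2}\le\|J\|_{\F}\|v\|_{2}$.

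The paper's route needs no index-ordering conventions and mirrors how $H$ is computed sample by sample. Yours is shorter and avoids the triangle-inequality step, which discards any cancellation among the $p_i$. Because the masked entries of $\mathrm{vec}(\widetilde{\nabla_{\mathcal{B}}F})$ vanish, you may replace $J$ by $J_{\mathcal{B}}$. That gives the sharper intermediate bound $\|H\|_{\F}\le\|J_{\mathcal{B}}\|_{2}\,\|\widetilde{\nabla_{\mathcal{B}}F}\|_{\F}$, with a spectral norm where the paper has a Frobenius norm. As you wrote it, with the full $J$, $\|J\|_{2}$ is not comparable in general to the paper's $\|J_{\mathcal{B}}\|_{\F}$, so the restriction matters if you want that refinement.

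Your vectorized picture, a row-vectorized output gradient paired with $\nabla_{\theta}Y\cdot(\nabla_{\theta}Y)^{\T}$, is also the one the paper adopts later in the proofs of Lemmas~\ref{l33} and~\ref{l46}. Both arguments reach the same constant $B_{1}$ because $\|\cdot\|_{2}\le\|\cdot\|_{\F}$.
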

\begin{lemma}\label{l42}
    Using $c_2$ from Lemma $\ref{l32}$, if Assumptions $\text{A}_{1}$-$\text{A}_{4}$ hold, 
    we similarly have
    $$ F(Y^{+})\leq F(Y)-\eta\cdot\sum\limits_{i}\langle z_{i}, H^{\T}\rangle + c_{2}\cdot\eta^{2}\|\widetilde{\nabla_{Y}F}\|_{\F}^{2},$$
    where $z_{i}=\left((\nabla_{Y}F)_{i}\cdot\nabla_{\theta}Y_{i}\right)^{\T}$ is as defined in Section~\ref{subsec:notation}.
\end{lemma}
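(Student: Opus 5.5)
The plan is to rerun the proof of Lemma~\ref{l32} with the search direction $H^{\T}$ in place of $G^{\T}$. Everything there was a second-order Taylor bound along the path $\theta(t)=\theta-t\eta H^{\T}$, $t\in[0,1]$, and it used only two facts about the direction: its Frobenius norm and its contraction against the exact output-gradient. Here that contraction produces $\sum_i\langle z_i,H^{\T}\rangle$, and Lemma~\ref{l41} gives the norm bound with $\|\widetilde{\nabla_{Y}F}\|_{\F}$ in place of $\|\nabla_YF\|_{\F}$.

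Concretely, set $\varphi(t)=F(Y(\theta-t\eta H^{\T}))$, so that $\varphi(0)=F(Y)$ and $\varphi(1)=F(Y^{+})$. Then
\[
\varphi'(t)=-\eta\,\langle \nabla_YF(Y(\theta(t))),\ \nabla_\theta Y(\theta(t))[H^{\T}]\rangle .
\]
At $t=0$ this equals $-\eta\sum_{i=1}^N\langle (\nabla_YF)_i\cdot\nabla_\theta Y_i, H\rangle=-\eta\sum_i\langle z_i,H^{\T}\rangle$, where the sum runs over all $N$ rows because the exact $\nabla_YF$ is used. Writing $\varphi(1)=\varphi(0)+\varphi'(0)+\int_0^1(\varphi'(t)-\varphi'(0))\,\mathrm{d}t$ reduces the lemma to bounding $|\varphi'(t)-\varphi'(0)|$.

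I split that difference into two pieces. The first is the variation of $\nabla_YF$, which by $\text{A}_2$ and the mean value theorem with $\text{A}_3$ is at most $L\|Y(\theta(t))-Y\|_{\F}\le LB_1t\eta\|H\|_{\F}$; contracted against $\nabla_\theta Y[H^{\T}]$ (bounded via $\text{A}_3$), it contributes $LB_1^2t\eta^2\|H\|_{\F}^2$. The second is the variation of the Jacobian, which by $\text{A}_4$ is at most $B_2t\eta\|H\|_{\F}$; contracted against $\nabla_YF$ (bounded by $B_0$ via $\text{A}_1$), it contributes $B_0B_2t\eta^2\|H\|_{\F}^2$. Integrating over $t$ gives
\[
F(Y^{+})\le F(Y)-\eta\sum_i\langle z_i,H^{\T}\rangle+\tfrac12(LB_1^2+B_0B_2)\,\eta^2\|H\|_{\F}^2 .
\]
Lemma~\ref{l41} then gives $\|H\|_{\F}^2\le B_1^2\|\widetilde{\nabla_{Y}F}\|_{\F}^2=c_1\|\widetilde{\nabla_{Y}F}\|_{\F}^2$, which yields exactly the constant $c_2$ of Lemma~\ref{l32}.

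The one step that needs care is that the bounds $\text{A}_1$--$\text{A}_4$ must be applied at the intermediate points $\theta(t)$, so the whole segment has to lie in the local region. I will assume this, as the paper does throughout; it holds for small enough $\eta$ because $H$ is bounded there. It is also worth checking that no property of $\widetilde{\nabla_{Y}F}$ beyond its norm is used: the surrogate enters only through the direction $H$, while $F$ itself is always differentiated exactly. For that reason $\text{A}_6$ is not needed at this stage.
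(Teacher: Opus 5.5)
Your argument is correct and is essentially the paper's proof. The paper expands $Y^{+}-Y=\nabla_{\theta}Y(\theta)(\theta^{+}-\theta)+R$ with $\|R\|_{\F}\leq\frac{B_{2}}{2}\eta^{2}\|H\|_{\F}^{2}$ and applies the $L$-smooth quadratic upper bound for $F$; its error terms $\langle\nabla_{Y}F,R\rangle$ and $\frac{L}{2}\|Y^{+}-Y\|_{\F}^{2}$ are exactly your Jacobian-variation and gradient-variation pieces, and they give the same constant $\frac{1}{2}(LB_{1}^{2}+B_{0}B_{2})$ before Lemma~\ref{l41} is invoked, the only cosmetic difference being that you measure the variation of $\nabla_{Y}F$ along the curve $t\mapsto Y(\theta(t))$ rather than the straight segment from $Y$ to $Y^{+}$.
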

\begin{lemma}\label{l43}
    If Assumptions $\text{A}_{1}$-$\text{A}_{4}$ hold, then for any constant $s>0$, we have 
    $$\E\big[\|\tilde{Y}^{+}-Y^{+}\|_{\F}^{2}\big]\leq (1+\frac{1}{s})\left(1-\frac{|\mathcal{B}|}{N}\right) \|\tilde{Y}-Y\|_{\F}^{2}\
    + \eta^{2}\cdot(1+s)B_{1}^{4}\|\widetilde{\nabla_{Y}F}\|_{\F}^{2}.$$
\end{lemma}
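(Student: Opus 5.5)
We decompose the new cache error row by row, according to whether a row is refreshed by the current batch $\mathcal{B}$. By the update rule for $\tilde{Y}^{+}$, rows $n\in\mathcal{B}$ satisfy $(\tilde{Y}^{+})_n=(Y^{+})_n$ and contribute nothing. Rows $n\notin\mathcal{B}$ satisfy $(\tilde{Y}^{+})_n=\tilde{Y}_n$. Hence
\[
\|\tilde{Y}^{+}-Y^{+}\|_{\F}^{2}=\sum_{n\notin\mathcal{B}}\|\tilde{Y}_n-Y^{+}_n\|^{2}
=\sum_{n\notin\mathcal{B}}\|(\tilde{Y}_n-Y_n)+(Y_n-Y^{+}_n)\|^{2}.
\]
We then apply the elementary inequality $\|a+b\|^2\le(1+\frac1s)\|a\|^2+(1+s)\|b\|^2$, valid for any $s>0$, to each row. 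This gives
\[
\|\tilde{Y}^{+}-Y^{+}\|_{\F}^{2}\le(1+\tfrac1s)\sum_{n\notin\mathcal{B}}\|\tilde{Y}_n-Y_n\|^2+(1+s)\|Y^{+}-Y\|_{\F}^{2},
\]
where the second sum has been enlarged from the rows outside $\mathcal{B}$ to all rows.

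Next we take the expectation over $\mathcal{B}$, conditional on the past. Here $\tilde{Y}$ and $Y$ are fixed, since they depend only on $\mathcal{B}^{-}$ and earlier batches. The batch is drawn uniformly with size $b=|\mathcal{B}|$, so each row lies outside $\mathcal{B}$ with probability $1-\frac{|\mathcal{B}|}{N}$. Therefore
\[
\E\sum_{n\notin\mathcal{B}}\|\tilde{Y}_n-Y_n\|^2=\Big(1-\frac{|\mathcal{B}|}{N}\Big)\|\tilde{Y}-Y\|_{\F}^2,
\]
which produces the first term of the claimed bound.

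For the second term we bound $\|Y^{+}-Y\|_{\F}$ deterministically. By the mean value theorem in integral form along the segment from $\theta$ to $\theta^{+}$, together with Assumption $\text{A}_3$ (assuming the segment stays in the local region), we get
\[
\|Y^{+}-Y\|_{\F}\le B_1\|\theta^{+}-\theta\|=\eta B_1\|H\|_{\F}.
\]
Lemma~\ref{l41} then gives $\|H\|_{\F}\le B_1\|\widetilde{\nabla_Y F}\|_{\F}$, so
\[
\|Y^{+}-Y\|_{\F}^2\le\eta^2B_1^4\|\widetilde{\nabla_Y F}\|_{\F}^2 .
\]
The quantity $\widetilde{\nabla_Y F}=\widetilde{\nabla_Y F}(\tilde{Y},Y)$ does not depend on the current batch, so this bound survives the expectation unchanged. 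Combining the two pieces yields the stated inequality.

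The main obstacle is the measurability and expectation bookkeeping in the cross-term split. We must place the bound on $\|Y^{+}-Y\|$ over all rows before taking the expectation. Otherwise the factor $\mathbf{1}_{n\notin\mathcal{B}}$ becomes entangled with $Y^{+}$, which itself depends on $\mathcal{B}$. The split above avoids this, because the indicator multiplies only the $\mathcal{B}$-independent term $\|\tilde{Y}_n-Y_n\|^2$, and the remaining term is bounded uniformly over the choice of $\mathcal{B}$.
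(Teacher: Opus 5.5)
Your proposal is correct and follows essentially the same route as the paper: you split the error over the rows in $\mathcal{B}^{c}$, apply Young's inequality with parameter $s$, use the exclusion probability $1-|\mathcal{B}|/N$ on the batch-independent cache error, and bound $\|Y^{+}-Y\|_{\F}\le \eta B_{1}\|H\|_{\F}\le \eta B_{1}^{2}\|\widetilde{\nabla_{Y}F}\|_{\F}$ via Assumption $\text{A}_{3}$ and Lemma~\ref{l41}. The only difference is cosmetic: you enlarge the second sum to all rows before taking the conditional expectation rather than after, which makes your measurability point slightly cleaner.
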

\begin{lemma}\label{l44}
   If Assumptions $\text{A}_{1}$-$\text{A}_{4}$ hold, denote $$\mu := \sqrt{1-\frac{|\mathcal{B}|}{N}}\in(0,1).$$ Then for every $k\geq 1$,
    $$\begin{aligned}
    & \E\|\tilde{Y}(\theta_{k})-Y(\theta_{k})\|_{\F}^{2}\leq B_{1}^{4}\cdot\frac{\eta^{2}}{\mu(1-\mu)}
    \sum\limits_{i=1}^{k-1}\mu^{i}\E\|\widetilde{\nabla_{Y}F}\left(\tilde{Y}(\theta_{k-i}),Y(\theta_{k-i})\right)\|_{\F}^{2},
    \end{aligned}$$
\end{lemma}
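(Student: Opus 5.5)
The plan is to unroll the one-step recursion of Lemma~\ref{l43} with a specific choice of the free parameter $s$. Write $e_k := \E\|\tilde{Y}(\theta_k)-Y(\theta_k)\|_{\F}^{2}$ and $g_k := \E\|\widetilde{\nabla_{Y}F}(\tilde{Y}(\theta_k),Y(\theta_k))\|_{\F}^{2}$. Taking total expectation in Lemma~\ref{l43}, conditioned on the history up to step $k$, gives
\[
e_{k+1}\le \Big(1+\tfrac{1}{s}\Big)\mu^{2}\, e_k + \eta^{2}(1+s)B_1^{4}\, g_k .
\]
I would choose $s$ so that the contraction factor becomes $\mu$ instead of $\mu^2$. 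Setting $(1+1/s)\mu^2=\mu$ gives $s=\mu/(1-\mu)$, and hence $1+s = 1/(1-\mu)$. The recursion then reads
\[
e_{k+1}\le \mu\, e_k + \frac{\eta^{2}B_1^{4}}{1-\mu}\, g_k .
\]
This is exactly where the factor $1/(1-\mu)$ in the statement comes from.

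The next step is to fix the initial condition and iterate. At the first step the cache is initialized with the current outputs, $\tilde{Y}(\theta_1)=Y(\theta_1)$, or more generally $\tilde{Y}(\theta_0)=Y(\theta_0)$ together with the convention that indices start so that the sum begins at $i=1$. I would check the paper's indexing convention for this. Under that convention $e_1=0$, and induction yields
\[
e_k \le \frac{\eta^2B_1^4}{1-\mu}\sum_{j=1}^{k-1}\mu^{k-1-j}g_j=\frac{\eta^2B_1^4}{\mu(1-\mu)}\sum_{i=1}^{k-1}\mu^{i}g_{k-i},
\]
after substituting $i=k-j$ and writing $\mu^{i-1}=\mu^{i}/\mu$. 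This is precisely the claimed bound. For $k=1$ the sum is empty, which is consistent with $e_1=0$.

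The main obstacle is the expectation bookkeeping. Lemma~\ref{l43} is a bound conditional on the current state, with the expectation taken over $\mathcal{B}$ only. I need the tower property to pass to total expectations, and I also need to confirm that $g_k$ is a function of the state at step $k$, so that its conditional expectation is just itself. The second point to verify is the base case, namely that the cache is exact at the starting index. If it is not exact, an additional term $\mu^{k-1}e_1$ would appear, and I would absorb it through the convention $\tilde{Y}_0=Y_0$ with a shift of indices.
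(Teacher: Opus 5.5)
Your recursion step is correct, and it is a cleaner route than the paper's. Condition on $\mathcal{B}_0,\dots,\mathcal{B}_{k-2}$, which determine $\tilde Y(\theta_{k-1})$ and $Y(\theta_{k-1})$, and apply the tower property; that settles your bookkeeping concern. Your constant choice $s=\mu/(1-\mu)$ then gives $e_{k}\le \mu e_{k-1}+\frac{\eta^2B_1^4}{1-\mu}g_{k-1}$ in one line. The paper instead uses the step-dependent $s_j=\mu(1-\mu^{j-1})/(1-\mu)$ and a telescoping product. That produces an extra factor $1-\mu^k$, which it immediately discards. It also gives $s_1=0$, which is harmless only because it multiplies $e_0=0$. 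The final bounds are identical.

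The gap is the base case. In this paper the cache is initialized by $\tilde Y(\theta_0)=Y(\theta_0)$, so $e_0=0$, but $e_1$ is not zero. The rows of $\tilde Y(\theta_1)$ outside $\mathcal{B}_0$ still hold $Y(\theta_0)$, so $e_1=\E\sum_{n\notin\mathcal{B}_0}\|Y_n(\theta_0)-Y_n(\theta_1)\|_2^2$. This is generically positive, because every row depends on the shared parameter that moved. The leftover term $\mu^{k-1}e_1\le\frac{\eta^2B_1^4}{1-\mu}\mu^{k-1}g_0$ is exactly the $i=k$ summand, and no shift of indices removes it. Unrolling from $e_0=0$ yields
\[
e_k\le \frac{\eta^2B_1^4}{\mu(1-\mu)}\sum_{i=1}^{k}\mu^{i}g_{k-i}.
\]
This is what the paper's proof actually derives, and it is what the proofs of Lemmas~\ref{l45} and~\ref{l46} invoke. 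The upper limit $k-1$ in the displayed statement disagrees with the paper's own proof and is false as written: at $k=1$ it asserts $\tilde Y(\theta_1)=Y(\theta_1)$ almost surely. You should therefore prove the version with $\sum_{i=1}^{k}$ rather than try to reach $k-1$.
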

\begin{lemma}\label{l45}
    If Assumptions $\text{A}_{1}$, $\text{A}_{2}$, $\text{A}_{3}$, $\text{A}_{4}$ and $\text{A}_{6}$ hold, denote $$c_{4}:=4B_{1}^{2}L(1+B_{1}^{2}L)\quad \text{and}\quad\mu := \sqrt{1-\frac{|\mathcal{B}|}{N}}\in(0,1)$$
    while the step size satisfies
    $$ \eta<\min\left\{\frac{1}{c_4}\cdot\frac{|\mathcal{B}|}{2N},\ \frac{\sqrt{2\mu(1-\mu)}|\mathcal{B}|}{8B_{1}^{2}B_{3}(2N-|\mathcal{B}|)},\ 1\right\}, $$
    for every iteration $k$, we have
    \begin{align}
    \frac{1}{4}\E\|\nabla_{Y}F(Y(\theta_{k}))\|_{\F}^{2}&\leq
    \E\|\widetilde{\nabla_{Y}F}\left(\tilde{Y}(\theta_{k}),Y(\theta_{k})\right)\|_{\F}^{2}\leq
    4\E\|\nabla_{Y}F(Y(\theta_{k}))\|_{\F}^{2},\label{ineq4_5_1}\\
    (1-c_{4}\cdot\eta)\E\|\nabla_{Y}F(Y(\theta_{k}))\|_{\F}^{2} &\leq
    \E\|\nabla_{Y}F(Y(\theta_{k+1}))\|_{\F}^{2}\leq (1+c_{4}\cdot\eta)\E\|\nabla_{Y}F(Y(\theta_{k}))\|_{\F}^{2}.\label{ineq4_5_2}
    \end{align}
\end{lemma}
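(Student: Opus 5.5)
The plan is a joint induction on $k$ that proves \eqref{ineq4_5_1} and \eqref{ineq4_5_2} together. The first inequality at step $k$ bounds the surrogate error by the history of $\E\|\nabla_Y F\|^2$. The second inequality is what lets that history be compared with the current value.

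\textbf{Step 1: reduce \eqref{ineq4_5_1} to a staleness bound.} From the Taylor expansion and $\text{A}_6$,
\[
\|\widetilde{\nabla_{Y}F}-\nabla_Y F\|_{\F}\le B_3\|\tilde Y-Y\|_{\F}.
\]
Using $\|a+b\|^2\le 2\|a\|^2+2\|b\|^2$ in both directions, it suffices to show
\[
B_3^2\,\E\|\tilde Y(\theta_k)-Y(\theta_k)\|_{\F}^2\le \tfrac18\,\E\|\nabla_Y F(Y(\theta_k))\|_{\F}^2 .
\]
Indeed, this gives $\E\|\widetilde{\nabla_Y F}\|^2\le 2\E\|\nabla_Y F\|^2+\tfrac14\E\|\nabla_Y F\|^2\le 4\E\|\nabla_Y F\|^2$. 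Conversely, $\E\|\nabla_Y F\|^2\le 2\E\|\widetilde{\nabla_Y F}\|^2+\tfrac14\E\|\nabla_Y F\|^2$ gives the lower bound with constant $\tfrac38\ge\tfrac14$.

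For $k=0$ I take $\tilde Y(\theta_0)=Y(\theta_0)$, so the base case is immediate. By Lemma~\ref{l44} and the induction hypothesis (upper half of \eqref{ineq4_5_1} at earlier steps),
\[
\E\|\tilde Y-Y\|^2\le \frac{4B_1^4\eta^2}{\mu(1-\mu)}\sum_{i=1}^{k-1}\mu^i\,\E\|\nabla_Y F(Y(\theta_{k-i}))\|^2 .
\]

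\textbf{Step 2: compare past gradients with the current one.} Apply the lower half of \eqref{ineq4_5_2} backwards $i$ times, which is available from the induction at earlier steps:
\[
\E\|\nabla_Y F(Y(\theta_{k-i}))\|^2\le (1-c_4\eta)^{-i}\,\E\|\nabla_Y F(Y(\theta_k))\|^2 .
\]
The sum becomes geometric with ratio $\mu/(1-c_4\eta)$. This ratio must be bounded away from $1$, and this is where the first step-size condition enters. Since $\mu^2=1-|\mathcal B|/N$, we have $1-\mu\ge |\mathcal B|/(2N)$. The condition $c_4\eta<|\mathcal B|/(2N)$ then keeps $\mu/(1-c_4\eta)<1$, and I want the sum bounded by something like $\mu\,(2N-|\mathcal B|)/|\mathcal B|$ up to constants. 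The second step-size condition, $\eta<\sqrt{2\mu(1-\mu)}\,|\mathcal B|/\big(8B_1^2B_3(2N-|\mathcal B|)\big)$, is exactly shaped to give
\[
B_3^2\,\frac{4B_1^4\eta^2}{\mu(1-\mu)}\times(\text{geometric sum})\le \tfrac18 .
\]
Matching the constants precisely, in particular getting the factor $(2N-|\mathcal B|)/|\mathcal B|$ out of the geometric series, is the main obstacle. I would first try $1-\mu/(1-c_4\eta)\ge$ a multiple of $|\mathcal B|/(2N-|\mathcal B|)$, using $1-\mu=\frac{|\mathcal B|/N}{1+\mu}\ge \frac{|\mathcal B|}{2N}$. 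If that is not tight enough, I would instead use the cruder bound $(1-c_4\eta)^{-1}\le 1+2c_4\eta$.

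\textbf{Step 3: prove \eqref{ineq4_5_2} at step $k$.} Work pathwise first. By $\text{A}_2$,
\[
\|\nabla_Y F(Y^+)-\nabla_Y F(Y)\|\le L\|Y^+-Y\|,
\]
and by the mean value theorem with $\text{A}_3$ and Lemma~\ref{l41},
\[
\|Y^+-Y\|\le B_1\eta\|H\|\le B_1^2\eta\|\widetilde{\nabla_Y F}\|.
\]
Expanding $\|\nabla_Y F(Y^+)\|^2=\|\nabla_Y F(Y)+D\|^2$, where $D=\nabla_Y F(Y^+)-\nabla_Y F(Y)$, the cross term is bounded by $2\|\nabla_Y F(Y)\|\,\|D\|\le \|\nabla_Y F(Y)\|^2/(\cdot)+\ldots$ via Young's inequality. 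The quadratic term carries $\eta^2$, which is absorbed into $\eta$ using $\eta<1$; this is why the condition $\eta<1$ appears. Taking expectations and inserting the just-proven \eqref{ineq4_5_1}, $\E\|\widetilde{\nabla_Y F}\|^2\le 4\E\|\nabla_Y F\|^2$, should give a relative change of at most $c_4\eta$ with
\[
c_4=4B_1^2L(1+B_1^2L),
\]
where the factor $4$ comes from \eqref{ineq4_5_1}.

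The delicate point in this step is that the cross term $\E[\|\nabla_Y F\|\,\|\widetilde{\nabla_Y F}\|]$ must be handled by Cauchy--Schwarz in expectation, not pathwise. I would bound it by $\tfrac12\big(\E\|\nabla_Y F\|^2+\E\|\widetilde{\nabla_Y F}\|^2\big)$ and then apply \eqref{ineq4_5_1}. The lower bound in \eqref{ineq4_5_2} follows symmetrically from the reverse triangle inequality, which closes the induction.
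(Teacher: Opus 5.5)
Your architecture is the paper's: a joint induction in which Lemma~\ref{l44} and the upper half of \eqref{ineq4_5_1} at earlier steps control the staleness. The lower half of \eqref{ineq4_5_2}, applied backwards, turns the history into a geometric series with ratio $\nu=\mu/(1-c_4\eta)$, and a Lipschitz step with Cauchy--Schwarz in expectation gives \eqref{ineq4_5_2}.

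The gap is the step you defer as ``the main obstacle'', and both tactics you propose for it fail. Under $c_4\eta<|\mathcal{B}|/(2N)$, the factor $(1-c_4\eta)^{-1}$ can absorb almost all of the first-order gap $1-\mu\approx|\mathcal{B}|/(2N)$, so only a second-order gap survives. Set $x=|\mathcal{B}|/N$. The identity $(1-x)/(1-x/2)^2=1-x^2/(2-x)^2$ together with $\sqrt{1-y}\le 1-y/2$ gives
\[
\nu<\sqrt{1-\frac{|\mathcal{B}|^2}{(2N-|\mathcal{B}|)^2}},\qquad \frac{1}{1-\nu}<\frac{2(2N-|\mathcal{B}|)^2}{|\mathcal{B}|^2},
\]
and this is essentially attained as $c_4\eta\uparrow|\mathcal{B}|/(2N)$ with $|\mathcal{B}|/N$ small. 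Consequences:
\begin{itemize}
\item There is no bound of the form $1-\nu\geq c\,|\mathcal{B}|/(2N-|\mathcal{B}|)$.
\item Your fallback $(1-c_4\eta)^{-1}\le 1+2c_4\eta$ only yields $\nu<\mu(1+|\mathcal{B}|/N)$, which exceeds $1$ for small $|\mathcal{B}|/N$, so it does not even give $\nu<1$.
\item The geometric sum is of order $(2N-|\mathcal{B}|)^2/|\mathcal{B}|^2$, not linear in that ratio.
\end{itemize}
The second step-size condition is built so that its square cancels exactly this quadratic factor:
\[
B_3^2\,\frac{4B_1^4\eta^2}{\mu(1-\mu)}\cdot\frac{1}{1-\nu}<\frac{|\mathcal{B}|^2}{8(2N-|\mathcal{B}|)^2}\cdot\frac{2(2N-|\mathcal{B}|)^2}{|\mathcal{B}|^2}=\frac14 .
\]
Both estimates are tight in the worst case (large $k$, $c_4\eta$ and $B_3\eta$ near their caps, small $|\mathcal{B}|/N$). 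So your Step~1 target $B_3^2\E\|\tilde Y-Y\|_{\F}^2\le\tfrac18\E\|\nabla_YF\|_{\F}^2$ cannot be obtained from these estimates under the stated step-size condition.

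The repair is to aim for $\tfrac14$, which already suffices. The upper bound becomes $(2+\tfrac12)\E\|\nabla_YF\|_{\F}^2\le 4\E\|\nabla_YF\|_{\F}^2$. For the lower bound, $\E\|\nabla_YF\|_{\F}^2\le 2\E\|\widetilde{\nabla_YF}\|_{\F}^2+\tfrac12\E\|\nabla_YF\|_{\F}^2$ gives exactly the constant $\tfrac14$. This is the paper's route.

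Step~3 has a smaller slip. Bounding the cross term by $\tfrac12\bigl(\E\|\nabla_YF\|_{\F}^2+\E\|\widetilde{\nabla_YF}\|_{\F}^2\bigr)\le\tfrac52\E\|\nabla_YF\|_{\F}^2$ gives a linear coefficient $5B_1^2L$. Then $5B_1^2L\eta+4(B_1^2L)^2\eta^2\le c_4\eta$ fails whenever $B_1^2L<\tfrac14$. Use the genuine Cauchy--Schwarz bound $\sqrt{\E\|\nabla_YF\|_{\F}^2\,\E\|\widetilde{\nabla_YF}\|_{\F}^2}\le 2\E\|\nabla_YF\|_{\F}^2$ instead. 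This gives $4B_1^2L\eta+4(B_1^2L)^2\eta^2$, which is at most $c_4\eta$ because $\eta<1$.
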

\begin{lemma}\label{l46}
    If Assumptions $\text{A}_{1}$-$\text{A}_{6}$ hold, denote 
    $$\quad \nu := \frac{\mu}{1-c_{4}\cdot\eta} 
    \quad \text{and}\quad c_{5} := 4 c_{2}+\frac{|\mathcal{B}|}{2N}B_{1}^{4} + \frac{2|\mathcal{B}|\cdot B_{1}^{4}B_{3}^{2} }{{N}\mu(1-\mu)(1-\nu)},
    $$
    then $\nu < 1$. Moreover, while 
    $$ \eta < \min\left\{\frac{1}{c_4}\cdot\frac{|\mathcal{B}|}{2N},\ \frac{\sqrt{2\mu(1-\mu)}|\mathcal{B}|}{8B_{1}^{2}B_{3}(2N-|\mathcal{B}|)},\ 1,\ 
    \frac{|\mathcal{B}|}{c_{5}N}\lambda_{\min}\right\}, $$
    there exists a constant $c_{6} > 0$,
    such that 
    $$\E\big[F(Y(\theta_{k+1}))\big]\leq 
    \E\big[F(Y(\theta_{k}))\big]-
    c_{6}\cdot\eta\E\big[\|\nabla_{Y}F(Y(\theta_{k}))\|_{\F}^{2}\big].$$
    Specifically, one may take  $c_{6}=\left(\frac{|\mathcal{B}|}{N}\lambda_{\min}-c_{5}\eta\right)$.
\end{lemma}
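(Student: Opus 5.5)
Three pieces go into the proof of Lemma~\ref{l46}: the one-step inequality of Lemma~\ref{l42}, the conditional expectation over $\mathcal{B}$, and the control of the cache error through Lemmas~\ref{l44}--\ref{l45}.

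\textbf{Showing $\nu<1$.} Since $\eta<\frac{1}{c_4}\cdot\frac{|\mathcal{B}|}{2N}$, we have $c_4\eta<\frac{|\mathcal{B}|}{2N}=\frac{1-\mu^2}{2}$. It therefore suffices to check
\[
\mu<1-\frac{1-\mu^2}{2}=\frac{1+\mu^2}{2},
\]
which is equivalent to $(1-\mu)^2>0$.

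\textbf{Splitting the inner product.} I start from Lemma~\ref{l42} and take $\E_{\mathcal{B}}$ conditioned on the history. Since $\tilde{Y}$ is determined before $\mathcal{B}$ is drawn, we get
\[
\E_{\mathcal{B}}\Big[\sum_i\langle z_i,H^{\T}\rangle\Big]=\frac{|\mathcal{B}|}{N}\Big\|\sum_i z_i\Big\|^2+\frac{|\mathcal{B}|}{N}\Big\langle \sum_i z_i,\sum_i (p_i-z_i)\Big\rangle .
\]
Note that $\sum_i z_i=(\nabla_\theta\mathcal{L})^{\T}$.
\begin{itemize}
\item The first term is bounded below using A$_5$ exactly as in Lemma~\ref{l33}, giving $\frac{|\mathcal{B}|}{N}\lambda_{\min}\|\nabla_YF\|_{\F}^2$.
\item For the cross term, the Taylor expansion together with A$_6$ gives $\|\widetilde{\nabla_YF}-\nabla_YF\|_{\F}\le B_3\|\tilde Y-Y\|_{\F}$. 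With A$_3$ and Young's inequality,
\[
\Big|\Big\langle\sum_i z_i,\sum_i(p_i-z_i)\Big\rangle\Big|\le \frac{\eta}{2}B_1^4\|\nabla_YF\|_{\F}^2+\frac{1}{2\eta}B_1^2B_3^2\|\tilde Y-Y\|_{\F}^2 .
\]
This split is chosen so that both pieces can later carry a factor $\eta$.
\item The quadratic term $c_2\eta^2\|\widetilde{\nabla_YF}\|_{\F}^2$ is bounded by $4c_2\eta^2\|\nabla_YF\|_{\F}^2$ after total expectation, by \eqref{ineq4_5_1}. This produces the $4c_2$ in $c_5$.
\end{itemize}
The Young split above produces the $\frac{|\mathcal{B|}}{2N}B_1^4$ term of $c_5$.

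\textbf{Controlling the staleness term.} Taking total expectation, I apply Lemma~\ref{l44}:
\[
\E\|\tilde Y_k-Y_k\|_{\F}^2\le \frac{B_1^4\eta^2}{\mu(1-\mu)}\sum_{i\ge1}\mu^i\,\E\|\widetilde{\nabla_YF}_{k-i}\|_{\F}^2 .
\]
Then \eqref{ineq4_5_1} gives $\E\|\widetilde{\nabla_YF}_{k-i}\|_{\F}^2\le 4\,\E\|\nabla_YF_{k-i}\|_{\F}^2$. Iterating the left inequality of \eqref{ineq4_5_2} backwards gives
\[
\E\|\nabla_YF_{k-i}\|_{\F}^2\le (1-c_4\eta)^{-i}\,\E\|\nabla_YF_k\|_{\F}^2 .
\]
Hence the sum is bounded by $\sum_{i\ge 1}\nu^i\le \frac{1}{1-\nu}$ times $\E\|\nabla_YF_k\|_{\F}^2$. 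The staleness term is then $O(\eta^2)\|\nabla_YF_k\|^2$; multiplied by $\frac{1}{2\eta}$ it becomes an $O(\eta)$ term, matching the form of the last piece of $c_5$.

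\textbf{Collecting terms.} Putting everything together gives
\[
\E F_{k+1}\le \E F_k-\eta\Big(\frac{|\mathcal{B}|}{N}\lambda_{\min}-c_5\eta\Big)\E\|\nabla_YF_k\|_{\F}^2 ,
\]
and $c_6>0$ follows from the last step-size condition.

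\textbf{Main obstacle.} I expect the hardest part to be this staleness bookkeeping. First, the $\nu$-geometric sum needs the reverse inequality in \eqref{ineq4_5_2} to hold for every past index. Second, the stated $c_5$ has a constant $2$ in the staleness term, whereas my derivation gives $\frac{1}{2}\cdot4=2$ only after the $\frac{|\mathcal{B}|}{N}$ prefactor. If the constants do not line up exactly, I would adjust the Young weights, for example taking $\epsilon=\eta$ as above, to reproduce $c_5$.
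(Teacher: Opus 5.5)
Your proposal follows the paper's proof essentially step for step. The shared steps are Lemma~\ref{l42}, the conditional expectation $\E_{\mathcal{B}}[H^{\T}]=\frac{|\mathcal{B}|}{N}\sum_j p_j$, the bound $\frac{|\mathcal{B}|}{N}\|\sum_i z_i\|^2\ge\frac{|\mathcal{B}|}{N}\lambda_{\min}\|\nabla_YF\|_{\F}^2$, a Young split with weight $\eta$ on the cross term, the $4c_2\eta^2$ from \eqref{ineq4_5_1}, and the Lemma~\ref{l44}--\ref{l45} geometric sum in $\nu$ for the cache error. Your direct check of $\nu<1$ is actually cleaner than the paper's.

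The one thing to fix is the constant in your cross-term bound. From $\|\sum_i z_i\|\le B_1\|\nabla_YF\|_{\F}$ and $\|\sum_i(p_i-z_i)\|\le B_1B_3\|\tilde Y-Y\|_{\F}$, Young with weight $\eta$ gives $\frac{\eta}{2}B_1^2\|\nabla_YF\|_{\F}^2+\frac{1}{2\eta}B_1^2B_3^2\|\tilde Y-Y\|_{\F}^2$. It does not give the mixed bound you wrote. That mixed bound can fail when $B_1<1$, and when it does hold it puts $B_1^6$ rather than $B_1^4$ into the last term of $c_5$.

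To land on $c_5$ exactly, do what the paper does. Let $\hat g$ be the row-vectorized $\nabla_YF$ and $R_r$ the row-vectorized $\widetilde{\nabla_YF}-\nabla_YF$, and write the cross term as $\hat g\,\nabla_\theta Y(\nabla_\theta Y)^{\T}R_r^{\T}$. Then use $\|\hat g\,\nabla_\theta Y(\nabla_\theta Y)^{\T}\|\le B_1^2\|\nabla_YF\|_{\F}$ and $\|R_r\|\le B_3\|\tilde Y-Y\|_{\F}$; equivalently, use weight $\eta B_1^2$ in your own split. This gives $\frac{\eta}{2}B_1^4\|\nabla_YF\|_{\F}^2+\frac{1}{2\eta}B_3^2\|\tilde Y-Y\|_{\F}^2$.

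After that correction, the factor $\frac{|\mathcal{B}|}{2N}\cdot 4=\frac{2|\mathcal{B}|}{N}$ you were worried about is exactly the one appearing in $c_5$. No further adjustment is needed.
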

\begin{remark}
    By the same summation argument used in Remark \ref{r31}, the descent inequality of Lemma \ref{l46} implies
    \begin{equation*}
        \min_{k = 0,\cdots, T} \E\big[\|\nabla_{Y}F(Y(\theta_{k}))\|_{\F}\big] 
        \leq \sqrt{\frac{F(Y(\theta_{0}))}{c_{6}\cdot\eta\cdot (T+1)}}=O(T^{-1/2}). 
    \end{equation*}
    Note that the constant here is $c_{6}$ (given in Lemma \ref{l46}) and differs from the earlier constant $c_{3}$ in Remark \ref{r31}; 
    $c_{6}$ depends explicitly on the minibatch size $|\mathcal{B}|$, the step-size $\eta$, and the model constants (see Lemma \ref{l46}).
\end{remark}
\begin{theorem}\label{t47}
    Suppose $F$ is locally strongly convex around $Y^{*}$ with strong convexity constant $c_{Y}>0$.
    If Assumptions $\text{A}_{1}$-$\text{A}_{6}$ hold and the step size satisfies 
    $$ \eta < \min\left\{\frac{1}{c_4}\cdot\frac{|\mathcal{B}|}{2N},\ \frac{\sqrt{2\mu(1-\mu)}|\mathcal{B}|}{8B_{1}^{2}B_{3}(2N-|\mathcal{B}|)},\ 1,\ 
    \frac{|\mathcal{B}|}{c_{5}N}\lambda_{\min}\right\}, $$
    then while the iterates remain in a neighborhood of $Y^{*}$ where the local assumptions hold, the expected optimality gap converges linearly to zero:
    there exist constants $C>0$ and $\rho\in (0,1)$ such that
    \begin{equation*}
        \E\big[F(Y(\theta_{k}))\big] - F(Y^{*}) \leq C\rho^{k},
    \end{equation*}  
    where $\E\big[F(Y(\theta_{k}))\big]$ denotes $\E_{\mathcal{B}_{0},\cdots,\mathcal{B}_{k-1}}\big[F(Y(\theta_{k}))\big]$ and $Y^{*}$ is the minimum point of $F$.
\end{theorem}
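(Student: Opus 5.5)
The plan is to combine the expected descent inequality of Lemma~\ref{l46} with a gradient-domination (Polyak--Łojasiewicz type) inequality in the output variable $Y$, which follows from local strong convexity of $F$. Theorem~\ref{t34} is presumably proved the same way from Lemma~\ref{l33}. The step-size condition in Theorem~\ref{t47} is exactly the hypothesis of Lemma~\ref{l46}, so for every $k$ (while the iterates stay in the neighborhood) we have
\[
\E\big[F(Y(\theta_{k+1}))\big]\leq \E\big[F(Y(\theta_{k}))\big]-c_{6}\,\eta\,\E\big[\|\nabla_{Y}F(Y(\theta_{k}))\|_{\F}^{2}\big],
\]
with $c_{6}=\frac{|\mathcal{B}|}{N}\lambda_{\min}-c_{5}\eta>0$.

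\textbf{Step 1: gradient domination.} Strong convexity of $F$ around its minimizer $Y^{*}$ with constant $c_{Y}$ gives, for $Y$ in the neighborhood,
\[
F(Y^{*})\geq F(Y)+\langle\nabla_{Y}F(Y),Y^{*}-Y\rangle+\tfrac{c_{Y}}{2}\|Y^{*}-Y\|_{\F}^{2}.
\]
Minimizing the right-hand side over $Y^{*}$ yields
\[
F(Y)-F(Y^{*})\leq \tfrac{1}{2c_{Y}}\|\nabla_{Y}F(Y)\|_{\F}^{2},
\qquad\text{i.e.}\qquad
\|\nabla_{Y}F(Y)\|_{\F}^{2}\geq 2c_{Y}\big(F(Y)-F(Y^{*})\big).
\]
This holds pathwise, so taking total expectation gives
\[
\E\|\nabla_{Y}F(Y(\theta_{k}))\|_{\F}^{2}\geq 2c_{Y}\big(\E F(Y(\theta_{k}))-F(Y^{*})\big).
\]

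\textbf{Step 2: contraction of the gap.} Set $e_{k}:=\E[F(Y(\theta_{k}))]-F(Y^{*})\geq 0$. Subtracting $F(Y^{*})$ from the descent inequality and inserting Step 1 gives
\[
e_{k+1}\leq (1-2c_{6}c_{Y}\eta)\,e_{k}.
\]
Set $\rho:=1-2c_{6}c_{Y}\eta$. Since $c_{6}>0$ and $\eta>0$, we have $\rho<1$. Iterating gives $e_{k}\leq \rho^{k}e_{0}$, so one takes $C=e_{0}=F(Y(\theta_{0}))-F(Y^{*})$, or any positive constant if $e_{0}=0$.

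\textbf{Main obstacle: ensuring $\rho>0$.} In Theorem~\ref{t34} this was handled by the explicit condition $\eta<1/(2c_{3}c_{Y})$, and no analogous condition appears in Theorem~\ref{t47}. I would argue as follows. Since $e_{k+1}\geq 0$ and $e_{k}\geq 0$, if $\rho\leq 0$ the recursion forces $e_{k+1}=0$, and the bound holds trivially with any $\rho\in(0,1)$. More cleanly, one can take $\rho:=\max\{1-2c_{6}c_{Y}\eta,\tfrac12\}$, since $e_{k+1}\leq \max\{1-2c_{6}c_{Y}\eta,0\}\,e_{k}\leq \rho\, e_{k}$.

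A second point to check is that Lemma~\ref{l46} is a statement about expectations, which absorbs the cache term: the staleness of $\tilde{Y}$ is already folded into $c_{5}$ through Lemmas~\ref{l44}--\ref{l45}. Therefore no additional Lyapunov term involving $\|\tilde{Y}-Y\|_{\F}^{2}$ is needed, and the argument is essentially one line once Lemma~\ref{l46} is available. The only remaining caveat is the standing assumption that the iterates remain in the region where strong convexity and $\text{A}_{1}$--$\text{A}_{6}$ hold, which the statement already imposes.
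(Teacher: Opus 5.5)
Your proposal is correct and takes the same route as the paper, which proves Theorem~\ref{t47} by repeating the proof of Theorem~\ref{t34} with Lemma~\ref{l46} in place of Lemma~\ref{l33}: combine the expected descent inequality with the gradient-domination bound $\|\nabla_Y F\|_{\F}^2\geq 2c_Y(F(Y)-F(Y^{*}))$ to get $e_{k+1}\leq(1-2c_6c_Y\eta)e_k$. Your choice $\rho=\max\{1-2c_6c_Y\eta,\tfrac12\}$ correctly guarantees $\rho>0$, which Theorem~\ref{t34} enforced with an explicit step-size condition absent here; in fact positivity is automatic, since $c_Y\leq L$, $\lambda_{\min}\leq B_1^2$ and $\eta<|\mathcal{B}|/(2Nc_4)$ already give $2c_6c_Y\eta<\tfrac14$.
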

\begin{proof}
Proof of Theorem \ref{t47} proceeds identically to that of Theorem \ref{t34}, replacing 
$c_{3}$ with $c_{6}$ and appealing to Lemma \ref{l46} instead of Lemma \ref{l33}. We omit the details.
\end{proof}
\begin{corollary}\label{coro48}
    Under the assumptions of Theorem~\ref{t47}, while the iterates remain in the neighborhood of $Y^{*}$, the optimality gap $F(Y(\theta_{k})) - F(Y^{*})$ converges to zero almost surely at a linear rate.
    More precisely, 
    for almost every sample path $\omega$, there exist $\rho(\omega) \in (0,1)$ and $k_0(\omega) \in \mathbb{N}$ 
    such that 
    \begin{equation*}
        F(Y(\theta_{k})) - F(Y^{*}) \leq \rho(\omega)^k, \quad \forall k \geq k_0(\omega).
    \end{equation*}
\end{corollary}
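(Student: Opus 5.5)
Corollary~\ref{coro48} follows from Theorem~\ref{t47} by a Markov--Borel--Cantelli argument, mirroring Corollary~\ref{coro35}. Set $X_k := F(Y(\theta_k)) - F(Y^{*})$. While the iterates remain in the neighborhood where the local assumptions hold, $X_k \geq 0$, since $Y^{*}$ is the minimum point of $F$. Theorem~\ref{t47} supplies constants $C>0$ and $\rho\in(0,1)$ with $\E[X_k]\le C\rho^k$.

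Fix any $\bar\rho\in(\rho,1)$, for instance $\bar\rho=\sqrt{\rho}$. Markov's inequality gives
\[
\mathbb{P}\big(X_k > \bar\rho^{\,k}\big)\le \frac{\E[X_k]}{\bar\rho^{\,k}} \le C\Big(\frac{\rho}{\bar\rho}\Big)^{k}.
\]
The right-hand side is summable in $k$ because $\rho/\bar\rho<1$. By the first Borel--Cantelli lemma, almost surely only finitely many of the events $\{X_k>\bar\rho^{\,k}\}$ occur. Hence for almost every $\omega$ there is a $k_0(\omega)$ with $X_k(\omega)\le\bar\rho^{\,k}$ for all $k\ge k_0(\omega)$. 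This proves the claim with $\rho(\omega):=\bar\rho$, which can even be chosen deterministic. The same bound also yields $X_k\to0$ almost surely.

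The only delicate point is the phrase ``while the iterates remain in the neighborhood''. I would make this precise by working on the event that the whole trajectory stays in the local region, or equivalently by interpreting the expectation bound of Theorem~\ref{t47} as holding under that proviso. Nonnegativity of $X_k$, which Markov's inequality requires, holds exactly on that region. Beyond this, the argument is routine and uses the expectation rate of Theorem~\ref{t47} as a black box; none of the step-size constants enter.
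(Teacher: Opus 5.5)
Your argument is correct, and its core (Markov's inequality applied to the expectation rate of Theorem~\ref{t47}, then the first Borel--Cantelli lemma) is exactly the second half of the paper's proof of Corollary~\ref{coro35}, to which the paper defers.

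Where you differ is in how almost sure convergence is obtained. The paper first proves it separately: it shows $F(Y(\theta_k))$ is a supermartingale, applies Doob's convergence theorem, and identifies the limit via Fatou. You instead read off $X_k\to 0$ a.s.\ directly from the eventual bound $X_k\le\bar\rho^{\,k}$.

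This shortcut is not merely shorter. For the $H$-driven scheme the supermartingale step does not carry over. Lemma~\ref{l46} holds only in total expectation: its conditional one-step bound contains the positive cache term $\frac{|\mathcal{B}|B_{3}^{2}}{2N}\|\tilde{Y}-Y\|_{\F}^{2}$ and the surrogate term $c_2\eta^2\|\widetilde{\nabla_{Y}F}\|_{\F}^{2}$. Both are absorbed only after taking full expectations, via Lemmas~\ref{l44}--\ref{l45}. So $\E[F(Y(\theta_{k+1}))\mid\mathcal{F}_k]\le F(Y(\theta_k))$ is not available, and the paper's claim that the proof goes through ``identically'' with Lemma~\ref{l46} is not justified at that point.

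Your route needs only the expectation rate and the nonnegativity of $X_k$, so it is the more robust proof here. Choosing $\bar\rho\in(\rho,1)$ directly also avoids the paper's unstated requirement that $\rho(1+\epsilon)<1$ when it fixes ``any $\epsilon>0$''.
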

\begin{proof}
Proof of Corollary \ref{coro48} proceeds identically to that of Theorem \ref{coro35}, appealing to Lemma \ref{l46} and Theorem \ref{t47} instead of Lemma \ref{l33} and Theorem \ref{t34}. 
We omit the details.
\end{proof}
\begin{theorem}\label{t49}
    Suppose $F$ is locally convex around $Y^{*}$.
    If Assumptions $\text{A}_{1}$-$\text{A}_{6}$ hold and the step size satisfies 
        $$ \eta < \min\left\{\frac{1}{c_4}\cdot\frac{|\mathcal{B}|}{2N},\ \frac{\sqrt{2\mu(1-\mu)}|\mathcal{B}|}{8B_{1}^{2}B_{3}(2N-|\mathcal{B}|)},\ 1,\ 
    \frac{|\mathcal{B}|}{c_{5}N}\lambda_{\min}\right\}, $$ 
    then while the iterates remain in a neighborhood of $Y^{*}$ where the local assumptions hold, the expected optimality gap converges sublinearly to zero at a rate of $O(1/k)$: there exists a constant $C>0$,
    \begin{equation*}
        \E\big[F(Y(\theta_{k}))\big] - F(Y^{*}) \leq \frac{C}{k},
    \end{equation*}
    where $\E\big[F(Y(\theta_{k}))\big]$ denotes 
    $\E_{\mathcal{B}_{0},\cdots,\mathcal{B}_{k-1}}\big[F(Y(\theta_{k}))\big]$ and $Y^{*}$ is a minimum point of $F$.
\end{theorem}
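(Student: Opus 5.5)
The proof follows the standard $O(1/k)$ argument for convex gradient methods. It uses Lemma~\ref{l46} as the expected descent inequality in place of Lemma~\ref{l33}, so it mirrors the proof of Theorem~\ref{t36} with $c_3$ replaced by $c_6$.

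Write $\delta_k := \E[F(Y(\theta_k))] - F(Y^*) \ge 0$. By the step-size condition and Lemma~\ref{l46}, $c_6 = \frac{|\mathcal{B}|}{N}\lambda_{\min} - c_5\eta > 0$, and
\[
\delta_{k+1} \le \delta_k - c_6\eta\,\E\|\nabla_Y F(Y(\theta_k))\|_{\F}^2 .
\]
In particular $\delta_k$ is nonincreasing.

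The next step links the gradient to the gap through local convexity of $F$ in $Y$. Since $F$ is convex on the neighborhood containing $Y(\theta_k)$ and $Y^*$,
\[
F(Y(\theta_k)) - F(Y^*) \le \langle \nabla_Y F(Y(\theta_k)), Y(\theta_k) - Y^*\rangle \le R\,\|\nabla_Y F(Y(\theta_k))\|_{\F},
\]
where $R$ bounds $\|Y - Y^*\|_{\F}$ over the local region. Such an $R$ exists because the analysis is confined to a bounded neighborhood in which the assumptions hold. Squaring this pathwise inequality and applying Jensen's inequality gives $\delta_k^2 \le \bigl(\E[F(Y(\theta_k))-F(Y^*)]\bigr)^2 \le R^2\,\E\|\nabla_Y F(Y(\theta_k))\|_{\F}^2$. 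Substituting into the descent bound yields the recursion
\[
\delta_{k+1} \le \delta_k - \frac{c_6\eta}{R^2}\,\delta_k^2 .
\]

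To solve the recursion, divide by $\delta_k\delta_{k+1} > 0$. If some $\delta_k$ vanishes the claim is trivial from that index on, by monotonicity. Using $\delta_{k+1} \le \delta_k$ this gives
\[
\frac{1}{\delta_{k+1}} \ge \frac{1}{\delta_k} + \frac{c_6\eta}{R^2}\frac{\delta_k}{\delta_{k+1}} \ge \frac{1}{\delta_k} + \frac{c_6\eta}{R^2}.
\]
Telescoping gives $1/\delta_k \ge 1/\delta_0 + k c_6\eta/R^2$, hence $\delta_k \le R^2/(c_6\eta k)$. So one may take $C = R^2/(c_6\eta)$, or $C = \max\{\delta_0, R^2/(c_6\eta)\}$ if $k=0$ is to be included.

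The main obstacle is the convexity step: it needs a uniform bound $R$ on $\|Y(\theta_k) - Y^*\|_{\F}$, together with the convexity inequality holding along every realized iterate. Both rely on the standing hypothesis that ``the iterates remain in a neighborhood of $Y^*$ where the local assumptions hold.'' I would make explicit that this neighborhood is bounded with diameter $R$, and that convexity holds on a convex set containing it, so that the first-order convexity inequality is valid pathwise before taking expectations. The remaining steps are routine once Lemma~\ref{l46} is available.
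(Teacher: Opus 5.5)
Your proposal is correct and is essentially the paper's argument. The paper proves Theorem~\ref{t49} by rerunning the proof of Theorem~\ref{t36} with $c_3$ replaced by $c_6$ and Lemma~\ref{l46} in place of Lemma~\ref{l33}: it uses the same convexity bound $F(Y(\theta_k))-F(Y^*)\le B\|\nabla_Y F\|_{\F}$ on a ball $B(Y^*,B)$, the same Jensen step $\E\|\nabla_Y F\|_{\F}^2\ge e_k^2/B^2$, and the same reciprocal telescoping to get $e_k\le B^2/(c_6\eta k)$. Your explicit handling of the case $\delta_k=0$, and your requirement that convexity hold on a convex set containing the iterates and $Y^*$, are small refinements that the paper leaves implicit.
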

\begin{proof}
Proof of Theorem \ref{t49} proceeds identically to that of Theorem \ref{t36}, replacing 
$c_{3}$ with $c_{6}$ and appealing to Lemma \ref{l46} instead of Lemma \ref{l33}. We omit the details.
\end{proof}
\begin{corollary}\label{coro410}
    Under the assumptions of Theorem~\ref{t49},  while the iterates remain in the neighborhood of $Y^{*}$, the following hold:
    \begin{itemize}
        \item $F(Y(\theta_{k})) - F(Y^{*})$ converges to zero almost surely.
        \item The sequence $F(Y(\theta_{k}))$ admits almost surely convergent random subsequences at rate of $O(1/k)$, i.e., for almost every sample path
        $\omega$, there exist a subsequence ${k_n(\omega)}$ and a constant $C(\omega)$
        such that   
        \begin{equation*}
        F(Y(\theta_{k_{n}})) - F(Y^{*}) \leq \frac{C(\omega)}{k_{n}} , \quad \forall\ n \in \mathbb{N}.
        \end{equation*}
    \end{itemize}
\end{corollary}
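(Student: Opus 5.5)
The plan mirrors the (omitted) proof of Corollary~\ref{coro37}, with Lemma~\ref{l46} and Theorem~\ref{t49} in place of Lemma~\ref{l33} and Theorem~\ref{t36}. Write $D_k := F(Y(\theta_k)) - F(Y^{*})\ge 0$. This is nonnegative while the iterates stay in the neighborhood, since $Y^{*}$ is a local minimum point of $F$.

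\textbf{Almost sure convergence.} Theorem~\ref{t49} gives $\E[D_k]\le C/k\to 0$. Because $\sum_k C/k$ diverges, Borel--Cantelli cannot be applied to the whole sequence. Instead I would use the following argument.

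By Markov's inequality, $\mathbb{P}(D_{k^2}>\epsilon)\le C/(\epsilon k^2)$, and this is summable in $k$. Borel--Cantelli then yields $D_{k^2}\to 0$ almost surely along the squares.

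To fill in between squares, I need some monotonicity. Lemma~\ref{l46} only gives descent in expectation, and it does not condition on the cache state. So I would first seek a pathwise or conditional version. For the $H$-update, Lemma~\ref{l42} gives a pathwise one-step bound
$$F(Y^{+})\le F(Y)-\eta\langle \cdot\rangle+c_2\eta^2\|\widetilde{\nabla_Y F}\|_{\F}^2.$$
I would build a Lyapunov function $\Phi_k = D_k + \alpha\|\tilde{Y}(\theta_k)-Y(\theta_k)\|_{\F}^2$, with $\alpha$ chosen as in the proof of Lemma~\ref{l46} using Lemma~\ref{l43}. The goal is to show $\E[\Phi_{k+1}\mid\mathcal{F}_k]\le \Phi_k$, so that $\Phi_k$ is a nonnegative supermartingale. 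The supermartingale convergence theorem then gives $\Phi_k\to\Phi_\infty$ almost surely. Fatou's lemma together with $\E[\Phi_k]\to 0$ (Theorem~\ref{t49} plus Lemma~\ref{l44} for the cache term) forces $\Phi_\infty=0$. Since $D_k\le\Phi_k$, this gives the first bullet.

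\textbf{Random subsequence with $O(1/k)$ rate.} Apply Fatou's lemma to $kD_k$:
$$\E\bigl[\liminf_k kD_k\bigr]\le\liminf_k \E[kD_k]\le C<\infty.$$
Hence $\liminf_k kD_k<\infty$ almost surely. For each such $\omega$ one can pick infinitely many indices $k_n(\omega)$ with $k_nD_{k_n}\le \liminf_k kD_k+1 =: C(\omega)$. This is exactly the second bullet.

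\textbf{Main obstacle.} The delicate step is the conditional supermartingale property. Lemma~\ref{l46} was derived in total expectation by unrolling the cache staleness over past steps through Lemma~\ref{l44}, so it is not directly a conditional one-step inequality. If that construction fails, I would fall back on the squares subsequence. Almost sure convergence of $D_{k^2}$ combined with the bound $\E[D_k]\to0$ still gives convergence in probability for the full sequence. For full almost sure convergence I would bound the fluctuation between consecutive squares using the Lipschitz/boundedness assumptions A$_1$--A$_3$. Those assumptions give
$$|F(Y(\theta_{k+1}))-F(Y(\theta_k))|\le B_0B_1^2\eta\|\widetilde{\nabla_YF}\|_{\F}.$$
The remaining task would be to show, again via Markov and Borel--Cantelli, that the accumulated gradient norms over blocks $[k^2,(k+1)^2)$ vanish almost surely. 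This uses summability of $\sum_k \E\|\nabla_YF(Y(\theta_k))\|_{\F}^2$ from the telescoped descent in Lemma~\ref{l46}, together with inequality \eqref{ineq4_5_1}.
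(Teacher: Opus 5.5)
Your second bullet is exactly the paper's argument: Fatou's lemma applied to $k\,(F(Y(\theta_k))-F(Y^{*}))$.

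For the first bullet, the paper only says that the proof of Corollary~\ref{coro37} carries over with Lemma~\ref{l46} in place of Lemma~\ref{l33}. That means Doob's theorem applied to $F(Y(\theta_k))$ itself. You are right that this is not literally available. Lemma~\ref{l46} holds only in total expectation. Its conditional form (the display in its proof just before Lemmas~\ref{l44}--\ref{l45} are invoked) contains the positive term $\frac{|\mathcal{B}|B_3^2}{2N}\|\tilde{Y}-Y\|_{\F}^2$ with no factor of $\eta$. So under $H$-updates $F(Y(\theta_k))$ need not be a supermartingale. For example, at $Y(\theta_k)=Y^{*}$ with a stale cache, $\nabla_YF=0$ but $H\neq0$.

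Your Lyapunov function is the right repair, and it does close under exactly the step-size conditions of Theorem~\ref{t49}. However, you only announce it, and it is the crux. Concretely:
\begin{enumerate}
\item Take $s=\mu/(1-\mu)$ in Lemma~\ref{l43}, so the contraction factor is $\mu$ and $1+s=1/(1-\mu)$.
\item Bound $\|\widetilde{\nabla_YF}\|_{\F}^2\le 2\|\nabla_YF\|_{\F}^2+2B_3^2\|\tilde{Y}-Y\|_{\F}^2$.
\item Set
\[
\alpha=\frac{\frac{|\mathcal{B}|}{2N}B_3^2+2c_2\eta^2B_3^2}{1-\mu-\frac{2\eta^2B_1^4B_3^2}{1-\mu}},
\]
which makes the $\|\tilde{Y}-Y\|_{\F}^2$ terms cancel exactly.
\end{enumerate}
Since $|\mathcal{B}|/N=(1-\mu)(1+\mu)\le 2(1-\mu)$, the second step-size constraint gives $\eta^2B_1^4B_3^2\le(1-\mu)^3/8$. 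So the denominator is at least $\frac34(1-\mu)$. The $\eta^2$-coefficient of $\|\nabla_YF\|_{\F}^2$ is then at most $\frac{|\mathcal{B}|}{2N}B_1^4+\frac83c_2+\frac{4|\mathcal{B}|B_1^4B_3^2}{3N(1-\mu)^2}$, which is $\le c_5$ because $\nu\ge\mu$. Hence $\E[\Phi_{k+1}\mid\mathcal{F}_k]\le\Phi_k-c_6\eta\|\nabla_YF(Y(\theta_k))\|_{\F}^2$. Your Doob-plus-Fatou argument then goes through, with $\E\|\tilde{Y}-Y\|_{\F}^2\to0$ coming from Lemmas~\ref{l44}--\ref{l45} and the summability below.

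You should drop your fallback. Markov plus Borel--Cantelli over the blocks $[k^2,(k+1)^2)$ loses a factor $2k+1$ through Cauchy--Schwarz, and summability of $\sum_j\E\|\nabla_YF(Y(\theta_j))\|_{\F}^2$ alone does not absorb it.

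In fact neither blocks nor a supermartingale are needed. Telescoping Lemma~\ref{l46} gives $\sum_k\E\|\nabla_YF(Y(\theta_k))\|_{\F}^2\le (F(Y(\theta_0))-F(Y^{*}))/(c_6\eta)<\infty$. By monotone convergence, $\sum_k\|\nabla_YF(Y(\theta_k))\|_{\F}^2<\infty$ almost surely, so $\|\nabla_YF(Y(\theta_k))\|_{\F}\to0$ a.s. The pathwise bound $0\le F(Y(\theta_k))-F(Y^{*})\le B\|\nabla_YF(Y(\theta_k))\|_{\F}$ from the proof of Theorem~\ref{t36} then gives the first bullet immediately.

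This short route uses only what the paper actually proves. Your $\Phi$ route costs the computation above, but it delivers the genuine conditional descent inequality that the paper's ``identical'' proof tacitly relies on.
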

\begin{proof}
Proof of Corollary \ref{coro410} proceeds identically to that of Theorem \ref{coro37}, appealing to Lemma \ref{l46} and Theorem \ref{t49} instead of Lemma \ref{l33} and Theorem \ref{t36}. 
We omit the details.
\end{proof}

\section{Numerical experiment}
\label{sec:experiments}
We conduct two numerical experiments to validate the main theoretical claims of the paper:
\begin{itemize}
    \item (i) The effect of local convexity on convergence rates: we demonstrate the stark contrast between linear (exponential) convergence under a locally strongly convex objective and 
    sublinear (polynomial) convergence under a convex but non-strongly-convex objective.
    \item (ii) For sample-coupled objectives the proposed cached surrogate gradient (`$H$') effectively approximates the ideal per-batch gradient 
    (`$G$') and significantly outperforms the naive batch-local estimator.
\end{itemize}
Below we summarize the settings and the numerical results, and then discuss the relation between the numerical results and our theoretical results.

\subsection{Experiment 1: Convergence Rates under Different Convexity Assumptions}
We examine the convergence behavior under two convexity regimes using the same model architecture (a small three-layer MLP with input 4 → 64 → 64 → output 2) and 
synthetic data ($N=10$) generated by independent uniform sampling: 
each input $x_{i}$ is drawn uniformly from $[0,1]^{4}$ and each target $y_{i}$ is drawn independently 
and uniformly from $[0,1]^{2}$. 
Training is performed with sample-wise updates (mini-batch size 1), and each experiment is repeated 5 times with different random seeds; reported curves are averaged over runs.
We compare two loss functions:
\begin{itemize}
\item \textit{Strongly Convex Case:} mean squared error ($\sum(\hat{y}-y)^{2}$), trained with a fixed learning rate $\eta=0.01$ for $10^5$ epochs.
\item \textit{Non-Strongly Convex Case:} fourth-power loss ($\sum(\hat{y}-y)^{4}$), trained with a fixed learning rate $\eta=0.001$ for $5 \times 10^5$ epochs.
\end{itemize}
The results are summarized in Fig.~\ref{fig:exp-convergence}.
\begin{figure}[htbp]
\centering
\subcaptionbox{MSE: exponential decay.\label{fig:exp-mse}}
    {\includegraphics[width=0.48\linewidth]{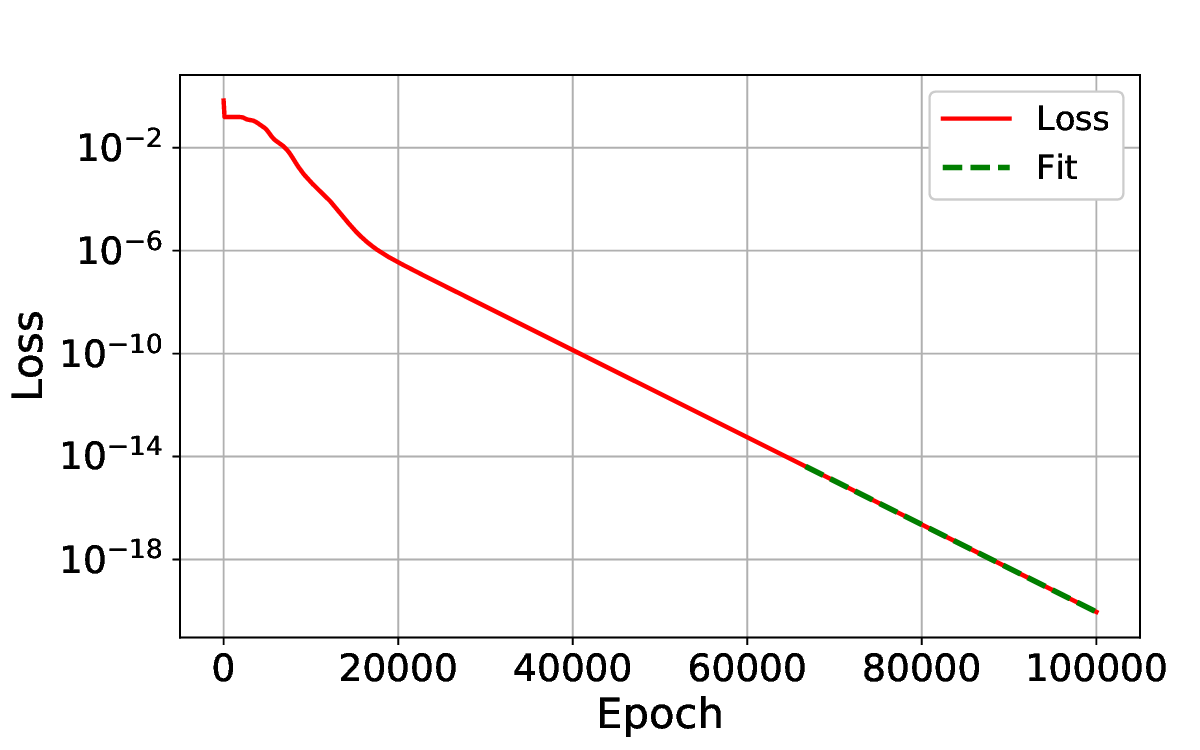}}
\hfill
\subcaptionbox{Fourth-power: polynomial decay.\label{fig:exp-highpower}}
    {\includegraphics[width=0.48\linewidth]{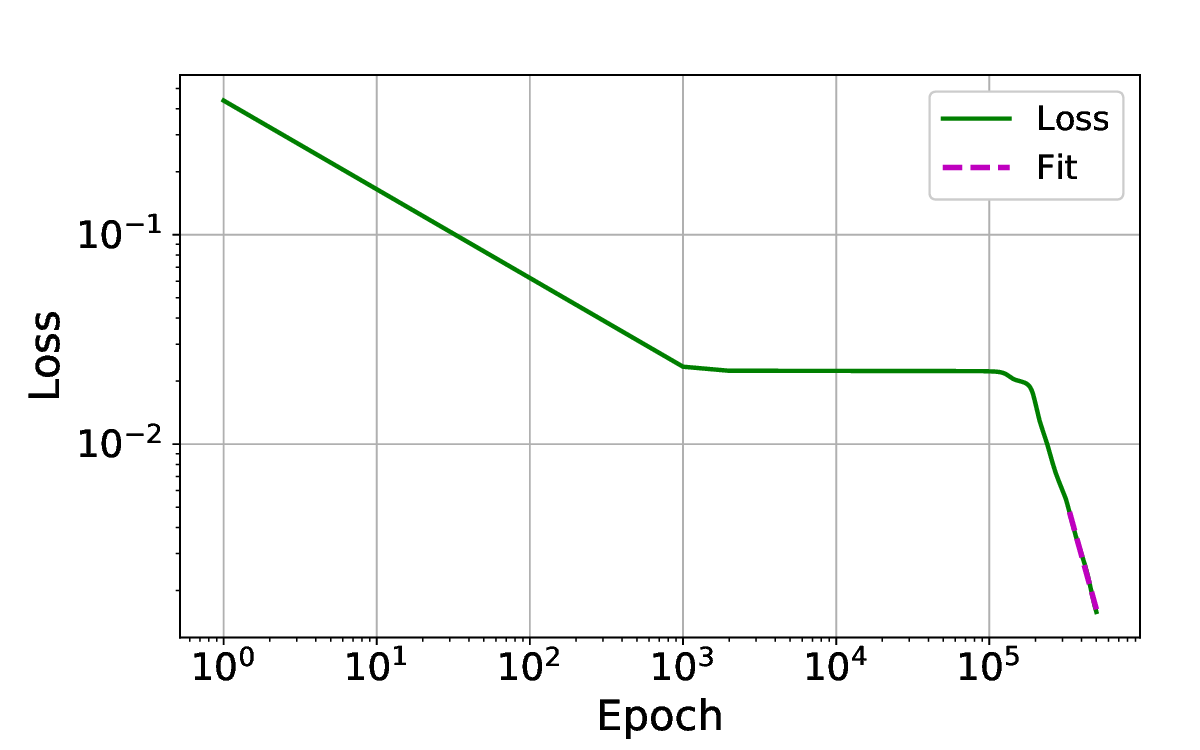}}
\caption{Convergence contrast under different convexity assumptions. (a) The MSE loss exhibits linear (exponential) convergence. 
A late-stage linear fit on $\log(\text{loss})$ yields a slope of $\approx-2.96\times10^{-4}$($R^{2}\approx0.999$), confirming $F_{k} \propto 0.9996^{k}$. 
(b) The fourth-power loss exhibits sublinear (polynomial) convergence. 
A late-stage log--log fit yields a slope of $\approx-2.65$ ($R^{2}\approx 0.995$), confirming $F_{k} \propto k^{-2.65}$.}
\label{fig:exp-convergence}
\end{figure}

The results validate our theoretical analysis: 
the strongly convex MSE objective drives linear convergence, evidenced by the straight-line 
trend on a semilog plot. In contrast, the convex but non-strongly-convex fourth-power objective 
results in sublinear convergence, following a clear polynomial law on a log--log plot. The fitted exponents confirm the problem-dependent nature of the convergence rate, 
which is faster than the worst-case $O(1/k)$ bound but remains sublinear.

\subsection{Experiment 2: Gram objective}
We consider the coupled-sample objective
$$F(Y)=\frac{1}{2}\|\frac{1}{d}YY^{\T}-S\|_{\F}^{2},$$
with $d=8$, $Y\in\mathbb{R}^{200\times 8}$, batch size $16$. We parameterize each row $y_i^\top$ of $Y$ by a single-hidden-layer MLP ($32\to128\to8$) with LeakyReLU activation, mapping fixed random inputs $x_i\in\mathbb{R}^{32}$.
We compare three update schemes executed in parallel from the same randomized initialization:
(i) `$G$' -- ideal per-batch rows of the full $\nabla_{Y} F$; (ii)`$H$' -- cached surrogate using $\tilde{Y}$ and incremental Gram updates;
(iii) $naive$ -- batch-local computation restricted to the batch. For each scheme we record the full objective $F(Y)$ and the Frobenius norm $\|\nabla_{Y} F\|_{\F}$ and perform $5$ independent runs and report averaged curves.
See Fig.~\ref{fig:exp-gram} and Fig.~\ref{fig:gram-GH}.
\begin{figure}[t]
\centering
\includegraphics[width=1.0\linewidth]{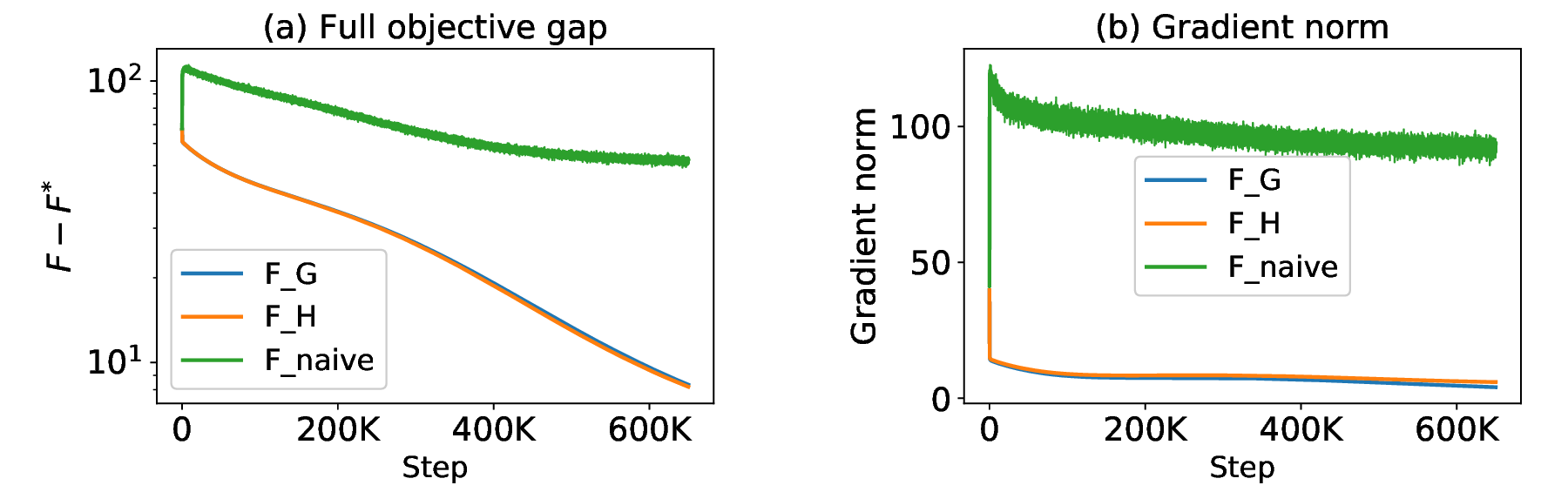}
\caption{Full objective gap $F(Y)-F^{*}$ and gradient norm $\|\nabla_{Y} F\|_{\F}$ (log scale) for the three schemes $G / H / naive$. 
This composite figure shows the evolution of the full objective gap and the corresponding gradient magnitude over training.}
\label{fig:exp-gram}
\end{figure}

\begin{figure}[t]
\centering
\includegraphics[width=1.0\linewidth]{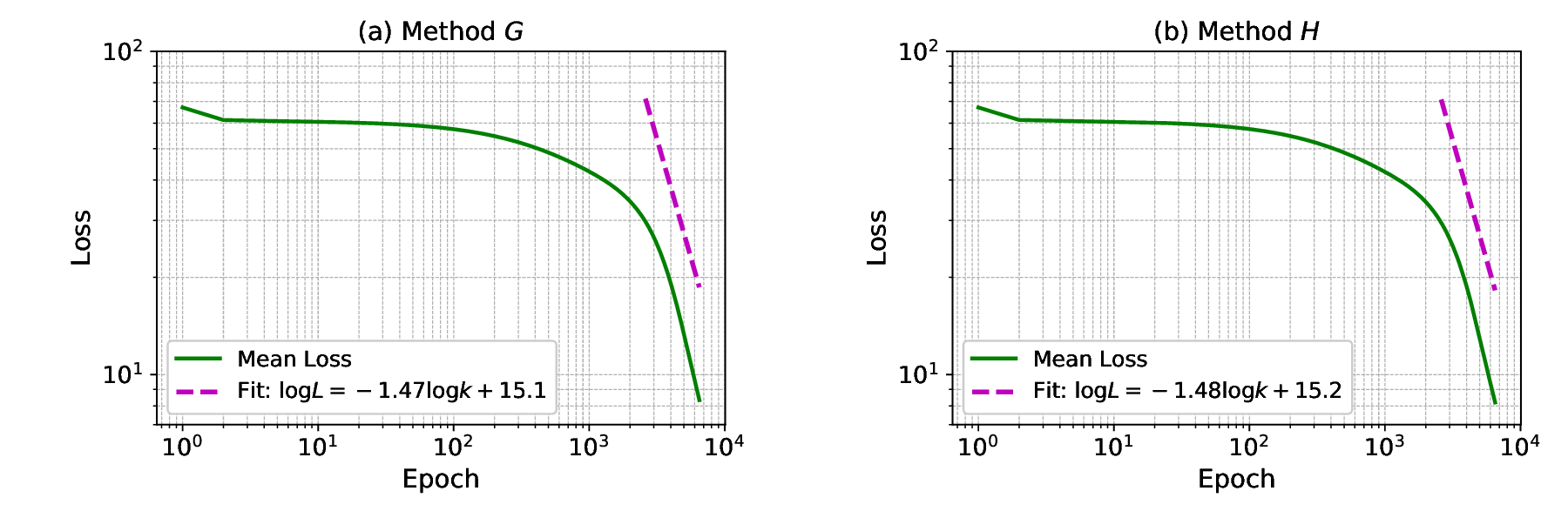}
\caption{Diagnostics for methods $G$ (left) and $H$ (right). Log--log plot of training loss versus epochs. Late-stage log--log fits yield slopes $\approx-1.467$ ($R^{2}\approx 0.9854$) for $G$ and $\approx-1.483$ ($R^{2}\approx 0.9871$) for $H$, indicating $F_{k} \propto k^{-1.467}$ and $F_{k}\propto k^{-1.483}$ respectively.}
\label{fig:gram-GH}
\end{figure}

The experiments show that the ideal method `$G$' and the cached surrogate `$H$' produce nearly indistinguishable 
reductions in the full objective $F$, and both substantially outperform the naive estimator; 
the gradient-norm plots give the same ordering.
Furthermore, in the experiments, late‑stage log–log diagnostics reveal sublinear decay of the gap $F-F^{*}$, consistent with the $O(1/k)$ convergence behavior established theoretically.

Collectively, the two experiments confirm the main theoretical messages:
locally strongly convex losses admit exponential (linear) convergence under fixed-step updates;
convex but non-strongly convex losses degrade to sublinear decay;
and for sample-coupled objectives the cached-surrogate `$H$' provides a practical, low-cost route to approximate
the ideal per-batch gradient `$G$' and achieve markedly improved convergence relative to naive batch-local estimators.
While the experiments here are synthetic and intended to isolate theoretical phenomena, comparable empirical evaluations on real datasets are reported in SpecNet2 \cite{chen2022specnet2}.

\section{Conclusion}
We propose an SGD-style framework for sample-coupled objectives that uses two batch-gradient constructs: the ideal per-batch gradient `$G$', 
and a cached-surrogate gradient `$H$' that approximates `$G$' when full-data quantities are costly to compute. 
Our main contribution is a unified local convergence theory: under mild smoothness and Jacobian-boundedness assumptions ($\text{A}_1$-$\text{A}_6$) we develop a unified local convergence theory 
showing that both `$G$'-driven and `$H$'-driven updates enjoy the same qualitative regimes: linear convergence under local strong convexity, and sublinear convergence under 
mere local convexity ($O(1/k)$ in expectation). 
Controlled experiments corroborate the theoretical regimes and show that `$H$' closely tracks `$G$' in practice, while incurring only modest additional computation or storage compared with standard mini-batch SGD. Future work includes sharpening quantitative trade-offs (cache staleness, approximation error, batch size), extending the analysis beyond local neighborhoods toward global guarantees, 
and adapting the surrogate idea to momentum and adaptive optimizers.

\appendix
\section{Proofs and Detailed Derivations}
\label{app:detailed_proofs}
\subsection{Proof of Lemma \ref{l31}}
\label{d_l31}
\begin{proof}
    By the Cauchy--Schwarz inequality,
    \begin{equation*}
    \begin{aligned}
        \|G\|_{\F}^{2}
        &= \Bigl\|\sum_{i\in\mathcal{B}} z_{i}\Bigr\|_{\F}^{2}
         = \Bigl\|\sum_{i\in\mathcal{B}}(\nabla_{Y}F)_{i}\cdot\nabla_{\theta}Y_{i}\Bigr\|_{\F}^{2} \\
        &\leq \Bigl(\sum_{i\in\mathcal{B}} \|(\nabla_{Y}F)_{i}\cdot\nabla_{\theta}Y_{i}\|_{\F}\Bigr)^{2} \\
        &\leq \Bigl(\sum_{i\in\mathcal{B}} \|(\nabla_{Y}F)_{i}\|_{2}\,\|\nabla_{\theta}Y_{i}\|_{\F}\Bigr)^{2} \\
        &\leq \Bigl(\sum_{i\in\mathcal{B}}\|(\nabla_{Y}F)_{i}\|_{2}^{2}\Bigr)
           \Bigl(\sum_{i\in\mathcal{B}}\|\nabla_{\theta}Y_{i}\|_{\F}^{2}\Bigr) \\
        &\leq B_{1}^{2}\,\|\nabla_{Y}F\|_{\F}^{2}.
    \end{aligned}
\end{equation*}
\end{proof}
\subsection{Proof of Lemma \ref{l32}}
\label{d_l32}
\begin{proof}
By the Taylor expansion with integral remainder, we have
\begin{equation*}
    Y^{+} - Y= \nabla_{\theta}Y(\theta)(\theta^{+}-\theta) + R
\end{equation*} 
where 
\begin{equation*}
R = \int_{0}^{1}(1-t)\nabla^{2}_{\theta}Y(\theta+t(\theta^{+}-\theta))[\theta^{+}-\theta,\theta^{+}-\theta]\,\mathrm{d}t,
\end{equation*} 
and $\nabla_\theta^2 Y$ is the second-order derivative tensor of $Y$ with respect to $\theta$. 
Then, by taking the Frobenius norm and Assumption $\text{A}_4$ on the line segment between $\theta$ and $\theta^+$, we obtain
\begin{equation*}
    \|R\|_{\F}\leq \frac{B_{2}}{2}\|\theta^{+}-\theta\|_{\F}^{2} = \frac{B_{2}}{2}\cdot\eta^{2}\|G\|_{\F}^{2}.
\end{equation*}   
Similarly, for the first-order difference, we have
\begin{equation*}
\|Y^{+} - Y\|_{\F}\leq \sup\limits_{t\in[0,1]}\|\nabla_{\theta}Y(\theta+t(\theta^{+}-\theta))\|_{\F}\cdot\|\theta^{+}-\theta\|_{\F}\leq B_{1} \cdot\eta\|G\|_{\F}. 
\end{equation*}
Since $\nabla_Y F$ is $L$‑Lipschitz by Assumption $\text{A}_2$ (i.e. $F$ is $L$‑smooth in $Y$), 
we have the standard quadratic upper bound
\begin{equation*}
    F(Y^{+}) \ \leq \ F(Y) + \langle\nabla_{Y} F, Y^{+} - Y \rangle
+ \frac{L}{2}\|Y^{+} - Y\|_{\F}^{2}.
\end{equation*}
Substituting the per‑row decomposition and summing over $i=1,\cdots,N$ gives
\begin{equation*}\label{eq1}
\begin{aligned}
F(Y^{+}) \ \leq & \ F(Y) + \langle\nabla_{Y} F, Y^{+} - Y \rangle
+ \frac{L}{2}\|Y^{+} - Y\|_{\F}^{2}\\
= &\ \ F(Y) + \tr\left((Y^{+} - Y)^{\T}(\nabla_{Y} F)\right) + \frac{L}{2}\|Y^{+} - Y\|_{\F}^{2}\\
= &\ \ F(Y) + \tr\left((\nabla_{\theta}Y(\theta)(\theta^{+}-\theta) + R)^{\T}(\nabla_{Y} F)\right) + \frac{L}{2}\|Y^{+} - Y\|_{\F}^{2}\\
= &\ F(Y) + \tr\left(\sum\limits_{i=1}^{N}\nabla_{\theta}Y_{i}(\theta)(\theta^{+}-\theta)(\nabla_{Y} F)_{i}\right) + \langle \nabla_{Y} F, R \rangle + \frac{L}{2}\|Y^{+} - Y\|_{\F}^{2}\\
\leq & \ F(Y) - \eta\cdot\tr\big(\sum\limits_{i=1}^{N}\nabla_{\theta}Y_{i}(\theta)\cdot G^{\T}(\nabla_{Y} F)_{i}\big)
+ \left(\|\nabla_{Y} F\|_{2}\cdot\frac{B_{2}}{2}+\frac{L}{2}B_{1}^{2}\right)\cdot\eta^{2} \|G\|_{\F}^{2}\\
= & \ F(Y) - \eta\cdot\sum\limits_{i=1}^{N}\tr\big(\left((\nabla_{Y} F)_{i}\cdot\nabla_{\theta}Y_{i}\right)\cdot G^{\T}\big)
+ \left(\|\nabla_{Y} F\|_{\F}\cdot\frac{B_{2}}{2}+\frac{L}{2}B_{1}^{2}\right)\cdot\eta^{2} \|G\|_{\F}^{2}\\
= & \ F(Y) - \eta\cdot\sum\limits_{i=1}^{N} \langle  z_{i}
,G^{\T}\rangle  + \frac{1}{2}\left({B_{0}B_{2}}+LB_{1}^{2}\right)\cdot\eta^{2}\|G\|_{\F}^{2}\\
\leq & \ F(Y) - \eta \cdot\sum\limits_{i=1}^{N}\langle  z_{i}
,G^{\T}\rangle  + \frac{1}{2}\left({B_{0}B_{2}}+LB_{1}^{2}\right)c_{1}\cdot\eta^{2}\|\nabla_{Y} F\|_{\F}^{2},
\end{aligned}
\end{equation*}
where we denote $c_{1}= B_{1}^2$ and then $\|G\|_{\F}^{2}\leq c_{1}\|\nabla_{Y}F\|_{\F}^{2}$.
Let $c_{2}= \frac{1}{2}\left({B_{0}B_{2}}+LB_{1}^{2}\right)c_{1}$. Then 
$$ F(Y^{+})\leq F(Y)-\eta\cdot\sum\limits_{i=1}^{N}\langle z_{i}, G^{\T}\rangle + c_{2}\cdot\eta^{2}\|\nabla_{Y}F\|_{\F}^{2}.$$
\end{proof}
\subsection{Proof of Lemma \ref{l33}}
\label{d_l33}
\begin{proof}
    Starting from Lemma \ref{l32} we have the deterministic bound
    $$ F(Y^{+})\leq F(Y)-\eta\cdot\sum\limits_{i=1}^{N}\langle z_{i}, G^{\T}\rangle + c_{2}\cdot\eta^{2}\|\nabla_{Y}F\|_{\F}^{2}.$$
    Take conditional expectation with respect to the current mini-batch $\mathcal{B}$.
    Under the usual uniform sampling assumption where each index is included in
    $\mathcal{B}$ with probability $|\mathcal{B}|/N$ 
    (this holds for both sampling with replacement and for sampling without replacement under the usual i.i.d. or exchangeable data assumptions), the conditional expectation of
    $G^{\T}$ satisfies
    \begin{equation*}
        \E_{\mathcal{B}}[G^{\T}] = \frac{|\mathcal{B}|}{N}\sum_{i}^{N}z_{j}.
    \end{equation*}
    Hence
    \begin{equation*}
        \begin{aligned}
           \E_{\mathcal{B}}\big[F(Y^{+})\big]&\leq \E_{\mathcal{B}}\big[F(Y)-\eta\cdot\sum\limits_{i=1}^{N}\langle z_{i}, G^{\T}\rangle + c_{2}\cdot\eta^{2}\|\nabla_{Y}F\|_{\F}^{2}\big]\\
            &= F(Y)-\eta\cdot\frac{|\mathcal{B}|}{N}\cdot\sum\limits_{i=1}^{N}\sum\limits_{j}^{N}\langle z_{i}, z_{j}\rangle + c_{2}\cdot\eta^{2}\|\nabla_{Y}F\|_{\F}^{2}\\
            &= F(Y)-\eta\cdot\frac{|\mathcal{B}|}{N}\cdot\sum\limits_{i=1}^{N}\sum\limits_{j}^{N}(\nabla_{Y} F)_{i}\cdot\nabla_{\theta}Y_{i}\cdot\nabla_{\theta}Y_{j}^{\T}\cdot(\nabla_{Y} F)_{j}^{\T} + c_{2}\cdot\eta^{2}\|\nabla_{Y}F\|_{\F}^{2}\\
            &= F(Y)-\eta\cdot\frac{|\mathcal{B}|}{N}\cdot \hat{g}\big(\nabla_{\theta}Y\cdot(\nabla_{\theta}Y)^{\T}\big)\hat{g}^{\T} + c_{2}\cdot\eta^{2}\|\nabla_{Y}F\|_{\F}^{2},
        \end{aligned}
    \end{equation*}
    where $\hat{g} = \big((\nabla_{Y} F)_{1},(\nabla_{Y} F)_{2},\cdots,(\nabla_{Y} F)_{N}\big)\in\mathbb{R}^{1\times Nd}$ is the row vector formed by horizontally concatenating $(\nabla_{Y} F)_{i}$,
    then since Assumption $A_{5}$ holds, we have 
    $$\hat{g}\big(\nabla_{\theta}Y\cdot(\nabla_{\theta}Y)^{\T}\big)\hat{g}^{\T}\geq \lambda_{\min}\|\hat{g}\|_{2}^{2}.$$
    Noting that $\|\hat{g}\|_{2}^{2}=\sum\limits_{i=1}^{N}\|(\nabla_{Y} F)_{i}\|_{2}^{2}=\|\nabla_{Y} F\|_{\F}^{2}$, we obtain
     \begin{equation*}
        \begin{aligned}
            \E_{\mathcal{B}}\big[F(Y^{+})\big]&\leq F(Y)-\eta\cdot\frac{|\mathcal{B}|}{N}\cdot \hat{g}\big(\nabla_{\theta}Y\cdot(\nabla_{\theta}Y)^{\T}\big)\hat{g}^{\T} + c_{2}\cdot\eta^{2}\|\nabla_{Y}F\|_{\F}^{2}\\
            &\leq F(Y)-\eta\cdot\frac{|\mathcal{B}|}{N}\cdot \lambda_{\min}\|\nabla_{Y} F\|_{\F}^{2} + c_{2}\cdot\eta^{2}\|\nabla_{Y}F\|_{\F}^{2}\\
            &=F(Y)-\eta\cdot\big[\frac{|\mathcal{B}|}{N}\lambda_{\min}-c_{2}\eta\big]\|\nabla_{Y} F\|_{\F}^{2}.
        \end{aligned}
    \end{equation*}
    Let $c_{3} = \frac{|\mathcal{B}|}{N}\lambda_{\min}-c_{2}\eta$, since $\eta<\frac{|\mathcal{B}|}{N}\cdot\frac{\lambda_{\min}}{c_{2}}$, we have $c_{3}>0$, i.e.,
    there exists a constant $c_{3} > 0$, such that
    $$\E_{\mathcal{B}}[F(Y^{+})]\leq F(Y)-c_{3}\cdot\eta\|\nabla_{Y}F\|_{\F}^{2}.$$ 
\end{proof}
\subsection{Detailed Derivation for Remark \ref{r31}}
\label{d_r31}
Summing the inequality from Lemma~\ref{l33} over $k = 0,\cdots,T$ and taking total expectation gives
    \begin{equation*}
        \begin{aligned}
       \sum\limits_{k=0}^{T} \E\big[\|\nabla_{Y}F(Y(\theta_{k}))\|_{\F}^{2}\big]
       &\leq \frac{1}{c_{3}\cdot\eta}\cdot\sum\limits_{k=0}^{T}\bigg(
        \E\big[F(Y(\theta_{k}))\big] - \E\big[F(Y(\theta_{k+1}))\big]
        \bigg) \\
        &= \frac{1}{c_{3}\cdot\eta}\cdot \bigg(F(Y(\theta_{0})) - \E\big[F(Y(\theta_{T+1}))\big]\bigg)\\
        &\leq \frac{1}{c_{3}\cdot\eta}\cdot F(Y(\theta_{0})),
        \end{aligned}
    \end{equation*}
    where for brevity we write $\E\big[\|\nabla_{Y}F(Y(\theta_{k}))\|_{\F}^{2}\big]$ as a shorthand for 
    $\E\big[\|\nabla_{Y}F(Y(\theta_{k}))\|_{\F}^{2}|\mathcal{B}_{0},\cdots, \mathcal{B}_{k-1}\big]$ and similarly
    $\E\big[F(Y(\theta_{k}))\big]$ denotes  
    $\E_{\mathcal{B}_{0},\cdots,\mathcal{B}_{k-1}}\big[F(Y(\theta_{k}))\big]$. 
    Hence
    \begin{equation*}
        \min_{k = 0,\cdots, T} \E\big[\|\nabla_{Y}F(Y(\theta_{k}))\|_{\F}^{2}\big] 
        \leq \frac{F(Y(\theta_{0}))}{c_{3}\cdot\eta\cdot (T+1)}, 
    \end{equation*}
    It follows that the expected squared Frobenius norm of the gradient decays as $O(1/T)$, 
    Consequently, the expected Frobenius norm of the gradient satisfies 
    \begin{equation*}
        \min_{k = 0,\cdots, T} \E\big[\|\nabla_{Y}F(Y(\theta_{k}))\|_{\F}\big] 
        \leq \sqrt{\frac{F(Y(\theta_{0}))}{c_{3}\cdot\eta\cdot (T+1)}}=O(T^{-1/2}). 
    \end{equation*}
    Therefore, for the nonconvex case we obtain the standard stochastic stationarity guarantee: the gradient norm vanishes in expectation at rate $O(T^{-1/2})$. This is a weak stationarity result and does not affect the stronger convergence claims derived under convexity.
\subsection{Proof of Theorem \ref{t34}}
\label{d_t34}
\begin{proof}
    By local strong convexity around $Y^{*}$ we have the standard inequality
    $$\|\nabla_{Y}F(Y(\theta))\|_{\F}^{2}\geq 2c_{Y}\big(F(Y(\theta))-F(Y^{*})\big),$$ where $Y^{*}$
    is the minimum point of $F$. Then, by Lemma \ref{l33} and by applying the strong-convexity inequality to the conditional expectation of the 
    squared gradient, we get
    \begin{equation*}
        \begin{aligned}
            &\E_{\mathcal{B}_{0},\cdots,\mathcal{B}_{k}}\big[F(Y(\theta_{k+1}))-F(Y^{*})\big]\\
            = &\E_{\mathcal{B}_{0},\cdots,\mathcal{B}_{k-1}}\E_{\mathcal{B}_{k}}\big[F(Y(\theta_{k+1}))-F(Y^{*})\ |\ \mathcal{B}_{0},\cdots,\mathcal{B}_{k-1}\ \big]\\
            \leq &\E_{\mathcal{B}_{0},\cdots,\mathcal{B}_{k-1}}\big[F(Y(\theta_{k}))-F(Y^{*})-c_{3}\cdot\eta\|\nabla_{Y}F(\theta_{k})\|_{\F}^{2}\big]\\
            \leq &\E_{\mathcal{B}_{0},\cdots,\mathcal{B}_{k-1}}\big[F(Y(\theta_{k}))-F(Y^{*})-c_{3}\cdot\eta\cdot 2c_{Y}\big(F(Y(\theta_{k}))-F(Y^{*})\big)\big]\\
            \leq &(1-2c_{3}c_{Y}\eta)\E_{\mathcal{B}_{0},\cdots,\mathcal{B}_{k-1}}\big[F(Y(\theta_{k}))-F(Y^{*})\big]\\
            \leq &\cdots\\
            \leq &(1-2c_{3}c_{Y}\eta)^{k+1}\cdot \big(F(Y(\theta_{0}))-F(Y^{*})\big).
        \end{aligned}
    \end{equation*}
    Let $C=F(Y(\theta_{0}))-F(Y^{*})$ and $\rho=1-2c_{3}c_{Y}\eta$, then
    $$\E\big[F(Y(\theta_{k}))\big] -F(Y^{*}) \leq C\rho^{k},$$
    where  $C>0$, $\rho\in(0,1)$ and $\E\big[F(Y(\theta_{k}))\big]$ denotes 
    $\E_{\mathcal{B}_{0},\cdots,\mathcal{B}_{k-1}}\big[F(Y(\theta_{k}))\big]$, which means 
    $\E\big[F(Y(\theta_{k}))\big] - F(Y^{*})$ converges linearly to zero.
\end{proof}
\subsection{Proof of Corollary \ref{coro35}}
\label{d_coro35}
\begin{proof}
    First note from Lemma \ref{l33} that, for the chosen $\eta$, there exists a deterministic constant $c_{3} > 0$, 
    such that
    $$\E\big[F(Y(\theta_{k+1}))|\mathcal{F}_{k}\big]\leq F(Y(\theta_{k}))-
    c_{3}\cdot\eta\|\nabla_{Y}F(Y(\theta_{k}))\|_{\F}^{2},$$
    where $\mathcal{F}_{k}$ is the filtration generated by $\mathcal{B}_{0},\cdots,\mathcal{B}_{k}$. In particular,
   $$\E\big[F(Y(\theta_{k+1}))|\mathcal{F}_{k}\big]\leq F(Y(\theta_{k})).$$
   Hence $F(Y(\theta_{k}))$ is a supermartingale with respect to $\mathcal{F}_{k}$. Since $F(Y(\theta_k))\geq F(Y^{*})$ 
   and, under our assumptions, the random variables $F(Y(\theta_k))$ are integrable under our assumptions, Doob's supermartingale convergence theorem implies that $F(Y(\theta_k))$ 
   converges almost surely to a finite random limit $L_{0}$. Note from Theorem~\ref{t34} that $\E\big[F(Y(\theta_{k}))\big]\rightarrow F(Y^{*})$ as $k\rightarrow\infty$. Since
   $F(Y(\theta_{k}))\xrightarrow{{\text{a.s.}}}L_{0}$ and $F(Y(\theta_{k}))\geq F(Y^{*})$, Fatou's lemma gives
   \begin{equation*}
    \E[L_{0}]\leq\liminf\limits_{k\rightarrow \infty}\E\big[F(Y(\theta_{k}))\big]=F(Y^{*}).
   \end{equation*}
   But $L_{0}\geq F(Y^{*})$ almost surely, hence the only possibility is $L_{0}= F(Y^{*})$. Thus $F(Y(\theta_{k}))$ converges almost surely to $F(Y^{*})$.

   Next we upgrade the expected linear rate to an almost sure (pathwise) linear tail bound.
   From Theorem \ref{t34} there exist deterministic constants $C>0$ and $\rho\in(0,1)$ such that 
   $$\E\big[F(Y(\theta_{k}))-F(Y^{*})\big]\leq C\rho^{k},\quad \forall k\geq 0,$$
   Fix any $\epsilon > 0$ and define events:
\[
B_k(\epsilon) \coloneqq \left\{ F(Y(\theta_k)) - F(Y^{*}) > [\rho(1 + \epsilon)]^k \right\}, \quad k = 1, 2, \dots
\]
By Markov's inequality and the expectation bound:
\begin{equation*}
\begin{aligned}
\mathbb{P}(B_k(\epsilon)) 
&= \mathbb{P}\bigg( F(Y(\theta_k)) - F(Y^{*}) > [\rho(1 + \epsilon)]^k \bigg) \nonumber \\
&\leq \frac{\mathbb{E}[F(Y(\theta_k)) - F(Y^{*})]}{[\rho(1 + \epsilon)]^k} \nonumber \\
&\leq \frac{C \rho^k}{[\rho(1 + \epsilon)]^k} = C (1 + \epsilon)^{-k}.
\end{aligned}
\end{equation*}
Let $\alpha \coloneqq (1 + \epsilon)^{-1} \in (0,1)$. The series $\sum_{k=1}^\infty \alpha^k$ converges, so:
\[
\sum_{k=1}^\infty \mathbb{P}(B_k(\epsilon)) \leq C \sum_{k=1}^\infty \alpha^k < \infty.
\]
By the Borel-Cantelli lemma, $\mathbb{P}\left( \limsup_{k \to \infty} B_k(\epsilon) \right) = 0$. 
Thus, a.s. there exists a random index $K_\epsilon \in \mathbb{N}$ (depending on the sample path) such that for all $k \geq K_\epsilon$:
\begin{equation*}
F(Y(\theta_k)) - F(Y^{*}) \leq [\rho(1 + \epsilon)]^k.
\label{eq:tail_bound}
\end{equation*}
This implies that the convergence rate is linear with base $\rho(1 + \epsilon)$ while $k\geq K_\epsilon$ for any $\epsilon > 0$.
Equivalently, for almost every sample path $\omega$, there exist $\rho(\omega)\in(\rho,1)$ and $k_{0}(\omega)\in\mathbb{N}$, such that
\[
F(Y(\theta_k)) - F(Y^{*}) \leq \rho(\omega)^k, \qquad \forall k \geq k_{0}(\omega).
\]
\end{proof}
\subsection{Proof of Theorem \ref{t36}}
\label{d_t36}
\begin{proof}
By assumption, for all $k\geq 0$, there exist $B>0$ and a minimum point $Y^{*}$ such that $Y(\theta_{k}\in B(Y^{*},B)$. Then local convexity gives
$$ 0\leq F(Y(\theta_{k}))-F(Y^{*})\leq \langle\nabla_{Y} F, Y(\theta_{k}) - Y^{*}\rangle \leq\|\nabla_{Y} F\|_{\F}\|Y(\theta_{k}) - Y^{*}\|_{\F}
\leq B\|\nabla_{Y} F(\theta_k)\|_{\F}.$$

Taking expectations and setting $e_k = \E\big[F(Y(\theta_{k}))- F(Y^{*})\big]$ and $d_k = \E\|\nabla_{Y} F(\theta_{k})\|_{\F}^{2}$, we get
$e_k \leq B\cdot\E\|\nabla_{Y} F(\theta_{k})\|_{\F}\leq B\sqrt{\E\|\nabla_{Y} F(\theta_{k})\|_{\F}^2}=B\sqrt{d_{k}}$, hence $d_{k}\geq e_{k}^{2}/B^{2}$.

Next, Lemma \ref{l33} gives the single-step descent inequality
$$\E\big[F(Y(\theta_{k+1}))|\mathcal{F}_{k}\big]\leq F(Y(\theta_{k}))-
c_{3}\cdot\eta\|\nabla_{Y}F(Y(\theta_{k}))\|_{\F}^{2},$$
where $\mathcal{F}_{k}$ is the filtration generated by $\mathcal{B}_{0},\cdots,\mathcal{B}_{k}$. Taking total expectation and subtracting $F(Y^{*})$ from both sides yields
\begin{equation*}
e_{k+1} \leq e_k - c_{3}\eta d_k \leq e_k- \frac{c_{3}\eta}{B^{2}}e_{k}^{2}\leq e_k- \frac{c_{3}\eta}{B^{2}}e_{k}e_{k+1}.
\end{equation*}
Hence 
\begin{equation*}
    \frac{1}{e_{k+1}} - \frac{1}{e_{k}} \geq \frac{c_{3}\eta}{B^{2}}
\end{equation*}
and telescoping yields
\begin{equation*}
    e_{k}\leq \frac{B^{2}}{c_{3}\eta}\cdot\frac{1}{k}
\end{equation*}
for all $k\geq 1$.
Then there exists a constant $C>0$ such that
$$e_{k}\leq \frac{C}{k},\quad \text{i.e.}\quad \E\big[F(Y(\theta_{k}))\big] - F(Y^{*}) \leq \frac{C}{k}. $$
\end{proof}
\subsection{Proof of Corollary \ref{coro37}}
\label{d_coro37}
\begin{proof}
Let $\mathcal{F}_k=\sigma(\mathcal{B}_0,\dots,\mathcal{B}_k)$. By Lemma \ref{l33} we have almost surely
    $$\E\big[F(Y(\theta_{k+1}))|\mathcal{F}_{k}\big]\leq F(Y(\theta_{k}))-
    c_{3}\cdot\eta\|\nabla_{Y}F(Y(\theta_{k}))\|_{\F}^{2}.$$
    Dropping the nonpositive term on the right-hand side yields
    $$\E\big[F(Y(\theta_{k+1}))|\mathcal{F}_{k}\big]\leq F(Y(\theta_{k}))\quad \text{a.s.},$$
    so $(F(Y(\theta_k))){k\ge0}$ is a supermartingale bounded below by $F(Y^{*})$. 
    Under Assumptions $\text{A}_1$-$\text{A}_5$ the random variables $F(Y(\theta_k))$ are integrable, hence Doob's supermartingale convergence theorem 
    implies the existence of a finite random variable $F_{\infty}$ such that
    $$F(Y(\theta_k))\rightarrow F_{\infty}\quad\text{a.s.}$$
    By Theorem \ref{t36} we have $\mathbb{E}[F(Y(\theta_k))]\downarrow F(Y^{*})$, therefore $F_\infty=F(Y^{*})$ almost surely. This proves (i).
Regarding (ii), set $e_k:=\mathbb{E}[F(Y(\theta_k))]-F(Y^{*})\geq 0$. By Theorem \ref{t36} there exists $C>0$ such that $e_k\leq C/k$ for all $k$. 
Define the nonnegative random variables
$$X_{k}\coloneqq  k\big(F(Y(\theta_{k})) - F(Y^{*})\big).$$
By Fatou's lemma and the bound on $e_k$ we obtain
$$\E\big[\liminf_{k\rightarrow \infty} X_{k}\big]
\leq \liminf_{k\rightarrow \infty} \E\big[X_{k}\big]=\liminf_{k\rightarrow \infty}ke_{k}\leq C.$$
Hence $\liminf_{k\rightarrow\infty} X_k<\infty$ almost surely. Consequently, for almost every $\omega$ there exists an increasing subsequence $k_n(\omega)$ along which $X_{k_n(\omega)}(\omega)$ is bounded; 
equivalently, there exists $C(\omega)$ such that
    \begin{equation*}
        F(Y(\theta_{k_{n}})) - F(Y^{*}) \leq \frac{C(\omega)}{k_{n}(\omega)} , \quad \forall\ n \in \mathbb{N},
    \end{equation*}
which means the sequence $F(Y(\theta_{k}))$ admits almost surely convergent random subsequences at rate of $O(1/k)$.\\
\end{proof}
\subsection{Detailed Derivation for Remark \ref{r32}}
\label{d_r32}
Using Markov's inequality together with the $O(1/k)$ expectation bound from Theorem \ref{t36} and 
    then applying the Borel-Cantelli lemma, one obtains deterministic (but very sparse) subsequences 
    along which almost-sure polynomial rates arbitrarily close to $1/k$ hold. 
    Concretely, for any $\epsilon>0$ define the deterministic subsequence $k_n(\epsilon):=\lfloor n^{2/\epsilon}\rfloor+1$. 
    Then for almost every $\omega$ there exists a path-dependent constant $C(\omega,\epsilon)>0$ such that
    \[
    F(Y(\theta_{k_n(\epsilon)}))(\omega) - F(Y^{*}) \leq C(\omega, \epsilon) k_{n}(\epsilon)^{-1+\epsilon}, \quad \forall n\in\mathbb{N}.
    \]
    This deterministic-subsequence statement is primarily of theoretical interest: the subsequence is sparse and the multiplicative 
    constant $C(\omega,\epsilon)$ depends on the sample path. Nevertheless, it has a practical implication. Although the pathwise 
    convergence rate for the full sequence remains unquantified, the guaranteed existence of almost-surely convergent subsequences 
    ensures the asymptotic attainability of near-optimal solutions along predetermined iterates. In practice, one may exploit this 
    fact by implementing checkpointing protocols that evaluate and store iterates at the prescribed deterministic indices $k_n(\epsilon)$; 
    such checkpoints provide implementable verification mechanisms that, with probability one, eventually observe iterates achieving the 
    stated polynomial rates. Thus, while the result is theoretically weaker than a uniform pathwise rate, it supplies an actionable strategy 
    for empirical validation and for selecting iterates likely to be near-optimal in long runs.
\subsection{Proof of Lemma \ref{l41}}
\label{d_l41}
\begin{proof}
    Recall $H=\sum_{i\in\mathcal{B}} p_i$ with $p_i=(\widetilde{\nabla_{Y}F})_{i}\cdot\nabla_{\theta}Y_i$. 
    By the Cauchy--Schwarz inequality for sums,
    \begin{equation*}
    \begin{aligned}
        \|H\|_{\F}^{2}
        &= \Bigl\|\sum_{i\in\mathcal{B}} p_{i}\Bigr\|_{\F}^{2}
         \leq \Bigl(\sum_{i\in\mathcal{B}} \|p_{i}\|_{\F}\Bigr)^{2} \\
        &= \Bigl(\sum_{i\in\mathcal{B}} \|(\widetilde{\nabla_{Y}F})_{i}\cdot\nabla_{\theta}Y_{i}\|_{\F}\Bigr)^{2} \\
        &\leq \Bigl(\sum_{i\in\mathcal{B}} \|(\widetilde{\nabla_{Y}F})_{i}\|_{2}\,\|\nabla_{\theta}Y_{i}\|_{\F}\Bigr)^{2} \\
        &\leq \Bigl(\sum_{i\in\mathcal{B}}\|(\widetilde{\nabla_{Y}F})_{i}\|_{2}^{2}\Bigr)
           \Bigl(\sum_{i\in\mathcal{B}}\|\nabla_{\theta}Y_{i}\|_{\F}^{2}\Bigr) \\
        &\leq B_{1}^{2}\,\|\widetilde{\nabla_{Y}F}\|_{\F}^{2}.
    \end{aligned}
    \end{equation*}
\end{proof}
\subsection{Proof of Lemma \ref{l42}}
\label{d_l42}
\begin{proof}
By the Taylor expansion with integral remainder, we have
\begin{equation*}
    Y^{+} - Y= \nabla_{\theta}Y(\theta)(\theta^{+}-\theta) + R
\end{equation*} 
where 
\begin{equation*}
R = \int_{0}^{1}(1-t)\nabla^{2}_{\theta}Y(\theta+t(\theta^{+}-\theta))[\theta^{+}-\theta,\theta^{+}-\theta]\,\mathrm{d}t,
\end{equation*} 
and $\nabla_\theta^2 Y$ is the second-order derivative tensor of $Y$ with respect to $\theta$. 
Then, by taking the Frobenius norm and Assumption $\text{A}_4$ on the line segment between $\theta$ and $\theta^+$, we obtain
\begin{equation*}
    \|R\|_{\F}\leq \frac{B_{2}}{2}\|\theta^{+}-\theta\|_{\F}^{2} = \frac{B_{2}}{2}\cdot\eta^{2}\|H\|_{\F}^{2}.
\end{equation*}   
Similarly, for the first-order difference, we have
\begin{equation*}
\|Y^{+} - Y\|_{\F}\leq \sup\limits_{t\in[0,1]}\|\nabla_{\theta}Y(\theta+t(\theta^{+}-\theta))\|_{\F}\cdot\|\theta^{+}-\theta\|_{\F}\leq B_{1} \cdot\eta\|H\|_{\F}. 
\end{equation*}
Since $\nabla_Y F$ is $L$‑Lipschitz by Assumption $\text{A}_2$ (i.e. $F$ is $L$‑smooth in $Y$), 
we have the standard quadratic upper bound
\begin{equation*}
    F(Y^{+}) \ \leq \ F(Y) + \langle\nabla_{Y} F, Y^{+} - Y \rangle
+ \frac{L}{2}\|Y^{+} - Y\|_{\F}^{2}.
\end{equation*}
Substituting the per‑row decomposition and summing over $i=1,\cdots,N$ gives
\begin{equation*}
\begin{aligned}
F(Y^{+}) \ \leq & \ F(Y) + \langle\nabla_{Y} F, Y^{+} - Y \rangle
+ \frac{L}{2}\|Y^{+} - Y\|_{\F}^{2}\\
= &\ \ F(Y) + \tr\left((\nabla_{\theta}Y(\theta)(\theta^{+}-\theta) + R)^{\T}(\nabla_{Y} F)\right) + \frac{L}{2}\|Y^{+} - Y\|_{\F}^{2}\\
= &\ F(Y) + \tr\left(\sum\limits_{i=1}^{N}\nabla_{\theta}Y_{i}(\theta)(\theta^{+}-\theta)(\nabla_{Y} F)_{i}\right) + \langle \nabla_{Y} F, R \rangle + \frac{L}{2}\|Y^{+} - Y\|_{\F}^{2}\\
\leq & \ F(Y) - \eta\cdot\tr\big(\sum\limits_{i=1}^{N}\nabla_{\theta}Y_{i}(\theta)\cdot H^{\T}(\nabla_{Y} F)_{i}\big)
+ \left(\|\nabla_{Y} F\|_{\F}\cdot\frac{B_{2}}{2}+\frac{L}{2}B_{1}^{2}\right)\cdot\eta^{2} \|H\|_{\F}^{2}\\
= & \ F(Y) - \eta\cdot\sum\limits_{i=1}^{N}\tr\big(\left((\nabla_{Y} F)_{i}\cdot\nabla_{\theta}Y_{i}\right)\cdot H^{\T}\big)
+ \left(\|\nabla_{Y} F\|_{\F}\cdot\frac{B_{2}}{2}+\frac{L}{2}B_{1}^{2}\right)\cdot\eta^{2} \|H\|_{\F}^{2}\\
= & \ F(Y) - \eta\cdot\sum\limits_{i=1}^{N} \langle  z_{i}
,H^{\T}\rangle  + \frac{1}{2}\left({B_{0}B_{2}}+LB_{1}^{2}\right)\cdot\eta^{2}\|H\|_{\F}^{2}\\
\leq & \ F(Y) - \eta \cdot\sum\limits_{i=1}^{N}\langle  z_{i}
,H^{\T}\rangle  + c_{2}\cdot\eta^{2}\|\widetilde{\nabla_{Y} F}\|_{\F}^{2},
\end{aligned}
\end{equation*}
 where $z_{i}=\left((\nabla_{Y}F)_{i}\cdot\nabla_{\theta}Y_{i}\right)^{\T}$ is as defined in Section~\ref{subsec:notation}.
\end{proof}
\subsection{Proof of Lemma \ref{l43}}
\label{d_l43}
\begin{proof}
We sample a mini-batch $\mathcal{B}$ for the current update, which determines the batch gradient $H$ and, in turn, $\tilde{Y}^{+}$ and $Y^{+}$, while $\tilde{Y}$ and $Y$ remain fixed from the preceding step. 
Using the Frobenius-norm representation,
$$\|\tilde{Y}^{+}-Y^{+}\|_{\F}^{2} = \sum\limits_{i\in \mathcal{B}^{c}}\|\tilde{Y}_{i}^{+}-Y_{i}^{+}\|_{\F}^{2}.$$
For each $i\in\mathcal{B}^{c}$, we decompose
$$\tilde{Y}_{i}^{+}-Y_{i}^{+}= \tilde{Y}_{i}-Y_{i}^{+}=(\tilde{Y}_{i}-Y_{i})+(Y_{i}-Y_{i}^{+}).$$
Expanding the squared norm and applying the inequality $2\langle a,b\rangle \leq s|a|^{2} + \frac{1}{s}|b|^{2}$ (valid for any $s>0$) yields
$$ \|\tilde{Y}_{i}-Y_{i}^{+}\|_{2}^{2}\leq (1+\frac{1}{s})\|\tilde{Y}_{i}-Y_{i}\|_{2}^{2}+(1+s)\|Y_{i}-Y_{i}^{+}\|_{2}^{2}.$$
Let $\mathcal{F}$ denote the filtration generated by all randomness up to the current iterate. By the tower property, the total expectation decomposes as 
$$\E[\cdot]=\E_{\mathcal{F}}\Bigl[\E_{\mathcal{B}}\bigl[\cdot\mid\mathcal{F}\bigr]\Bigr],$$ 
where the inner expectation is taken over the random mini-batch $\mathcal{B}$. For the single-step analysis, we focus on this inner expectation. Summing over $i\in\mathcal{B}^{c}$ and taking conditional expectation given $\mathcal{F}$ yields
$$\E_{\mathcal{B}}\!\left[\|\tilde{Y}^{+}-Y^{+}\|_{\F}^{2}\;\middle|\;\mathcal{F}\right]\leq \left(1+\frac{1}{s}\right)\E_{\mathcal{B}}\!\left[\sum\limits_{i\in \mathcal{B}^{c}}\|\tilde{Y}_{i}-Y_{i}\|_{\F}^{2}\;\middle|\;\mathcal{F}\right]
+\left(1+s\right)\E_{\mathcal{B}}\!\left[\sum\limits_{i\in \mathcal{B}^{c}}\|Y_{i}-Y_{i}^{+}\|_{\F}^{2}\;\middle|\;\mathcal{F}\right].$$
In the remainder of this section, we abbreviate $\E_{\mathcal{B}}[\cdot\mid\mathcal{F}]$ as $\E_{\mathcal{B}}[\cdot]$.
Because $\tilde{Y}_i-Y_i$ is independent of the random batch $\mathcal{B}$, every index $i\in\{1,\dots,N\}$ is excluded from $\mathcal{B}$ with the same probability $1-\frac{|\mathcal{B}|}{N}$. By linearity of expectation,
$$\E_{\mathcal{B}}\big[\sum\limits_{i\in \mathcal{B}^{c}}\|\tilde{Y}_{i}-Y_{i}\|_{\F}^{2}\big] = (1-\frac{|\mathcal{B}|}{N})\sum\limits_{i=1}^{N}\|\tilde{Y}_{i}-Y_{i}\|_{\F}^{2}=(1-\frac{|\mathcal{B}|}{N})\|\tilde{Y}-Y\|_{\F}^{2}.$$
For the second term note that summing over $\mathcal{B}^{c}$ is bounded by summing over all samples:
$$\E_{\mathcal{B}}\big[\sum\limits_{i\in \mathcal{B}^{c}}\|Y_{i}-Y_{i}^{+}\|_{\F}^{2}\big]\leq \E_{\mathcal{B}}\big[\sum\limits_{i=1}^{N}\|Y_{i}-Y_{i}^{+}\|_{\F}^{2}\big]=
\E_{\mathcal{B}}\big[\|Y-Y^{+}\|_{\F}^{2}\big].$$
Using the update $\theta^{+}-\theta=-\eta H^{T}$ and $\|\nabla_{\theta}Y\|_{\F}\leq B_{1}$ (Assumption $\text{A}_{3}$), we have
$$\|Y-Y^{+}\|_{\F}\leq B_{1}\|\theta-\theta^{+}\|_{\F} = B_{1}\eta\|H\|_{\F}.$$
Applying Lemma \ref{l41} ($\|H\|_{\F}\leq B_1\|\widetilde{\nabla_{Y}F}\|_{\F}$) yields
$$\E_{\mathcal{B}}\big[\|Y-Y^{+}\|_{\F}^{2}\big]\leq \eta^{2}B_{1}^{2}\E_{\mathcal{B}}\big[\|H\|_{\F}^{2}\big]\leq \eta^{2}B_{1}^{4}\|\widetilde{\nabla_{Y}F}\|_{\F}^{2}.$$
Combining the preceding bounds gives the stated inequality
$$ \E_{\mathcal{B}}\big[\|\tilde{Y}^{+}-Y^{+}\|_{\F}^{2}\big]\leq (1+\frac{1}{s})\left(1-\frac{|\mathcal{B}|}{N}\right)\|\tilde{Y}-Y\|_{\F}^{2} +
        (1+s)\cdot\eta^{2} B_{1}^{4}\|\widetilde{\nabla_{Y}F}\|_{\F}^{2}.$$
\end{proof}
\subsection{Proof of Lemma \ref{l44}}
\label{d_l44}
\begin{proof}
    Starting from Lemma \ref{l43}, for each $k$ we have (for any $s_k>0$)
    $$
    \begin{aligned}
        &\E\|\tilde{Y}(\theta_{k})-Y(\theta_{k})\|_{\F}^{2}\\
        = &\E_{\mathcal{B}_{0},\cdots, \mathcal{B}_{k-1}}\|\tilde{Y}(\theta_{k})-Y(\theta_{k})\|_{\F}^{2}\\
        = &\E_{\mathcal{B}_{0},\cdots, \mathcal{B}_{k-2}}\E_{\mathcal{B}_{k-1}}\big[\|\tilde{Y}(\theta_{k})-Y(\theta_{k})\|_{\F}^{2}|\mathcal{B}_{0},\cdots, \mathcal{B}_{k-2}\big]\\
       \leq &(1+\frac{1}{s_{k}})\left(1-\frac{|\mathcal{B}|}{N}\right)\E\|\tilde{Y}(\theta_{k-1})-Y(\theta_{k-1})\|_{\F}^{2}
    \\ &\quad + \eta^{2}(1+s_{k})B_{1}^{4}\E\|\widetilde{\nabla_{Y}F}\left(\tilde{Y}(\theta_{k-1}),Y(\theta_{k-1})\right)\|_{\F}^{2}. 
    \end{aligned}
    $$
    Then
    \begin{equation}\label{pf4_4_1}
    \begin{aligned}
    &\E\|\tilde{Y}(\theta_{k})-Y(\theta_{k})\|_{\F}^{2} \\
    \leq &(1+\frac{1}{s_{k}})\left(1-\frac{|\mathcal{B}|}{N}\right) \E\|\tilde{Y}(\theta_{k-1})-Y(\theta_{k-1})\|_{\F}^{2}
    + \eta^{2}(1+s_{k})B_{1}^{4}\E\|\widetilde{\nabla_{Y}F}\left(\tilde{Y}(\theta_{k-1}),Y(\theta_{k-1})\right)\|_{\F}^{2}\\
    \leq &(1+\frac{1}{s_{k-1}})(1+\frac{1}{s_{k}})\left(1-\frac{|\mathcal{B}|}{N}\right)^{2}\E\|\tilde{Y}(\theta_{k-2})-Y(\theta_{k-2})\|_{\F}^{2}\\
    &\quad +\eta^{2}B_{1}^{4}(1+s_{k-1})(1+\frac{1}{s_{k}})\left(1-\frac{|\mathcal{B}|}{N}\right)\E\|\widetilde{\nabla_{Y}F}\left(\tilde{Y}(\theta_{k-2}),Y(\theta_{k-2})\right)\|_{\F}^{2} \\
    &\quad +\eta^{2}B_{1}^{4}(1+s_{k})\E\|\widetilde{\nabla_{Y}F}\left(\tilde{Y}(\theta_{k-1}),Y(\theta_{k-1})\right)\|_{\F}^{2}\\
    \leq &\cdots\\
    \leq &\left(\prod_{j=1}^{k}(1+\frac{1}{s_{j}})\right)\cdot\left(1-\frac{|\mathcal{B}|}{N}\right)^{k}\E\|\tilde{Y}(\theta_{0})-Y(\theta_{0})\|_{\F}^{2}\\
    &\quad + \sum_{i=1}^{k} \eta^{2}B_{1}^{4}(1+s_{k+1-i})\cdot\left(\prod_{j=0}^{i-2}(1+\frac{1}{s_{k-j}})\right)\cdot \left(1-\frac{|\mathcal{B}|}{N}\right)^{i-1}\E\|\widetilde{\nabla_{Y}F}\left(\tilde{Y}(\theta_{k-i}),Y(\theta_{k-i})\right)\|_{\F}^{2}\\
     =   &\sum_{i=1}^{k} \eta^{2}B_{1}^{4}(1+s_{k+1-i})\cdot\left(\prod_{j=0}^{i-2}(1+\frac{1}{s_{k-j}})\right)\cdot \left(1-\frac{|\mathcal{B}|}{N}\right)^{i-1}\E\|\widetilde{\nabla_{Y}F}\left(\tilde{Y}(\theta_{k-i}),Y(\theta_{k-i})\right)\|_{\F}^{2},
    \end{aligned}
    \end{equation}
    where the last equality holds because the cached value coincides with the true value at initialization, i.e., $\tilde{Y}(\theta_{0}) = Y(\theta_{0})$.
    Set for $j\geq1$
    $$s_{j}:=\frac{\mu(1-\mu^{j-1})}{(1-\mu)},$$
    One checks that $s_j>0$ for $\mu\in(0,1)$. With this choice,
    \begin{equation}\label{pf4_4_2}
    \prod_{j=0}^{i-2}(1+\frac{1}{s_{k-j}})= \prod_{j=0}^{i-2}\left(1+\frac{1-\mu}{\mu(1-\mu^{k-j-1})}\right)
    =\prod_{j=0}^{i-2}\left(\frac{1-\mu^{k-j}}{\mu(1-\mu^{k-j-1})}\right)=\frac{1-\mu^{k}}{\mu^{i-1}(1-\mu^{k-i+1})}.\end{equation}
    Therefore, subsitute $s_{k+1-i}$ and equality~\ref{pf4_4_2} into inequality~\ref{pf4_4_1}, we have 
    $$\begin{aligned}
        &\E\|\tilde{Y}(\theta_{k})-Y(\theta_{k})\|_{\F}^{2} \\
        \leq & \sum_{i=1}^{k} \eta^{2}B_{1}^{4} (1+\frac{\mu - \mu^{k-i+1}}{1-\mu})\cdot \frac{1-\mu^{k}}{\mu^{i-1}(1-\mu^{k-i+1})}\cdot \mu^{2i-2}\E\|\widetilde{\nabla_{Y}F}\left(\tilde{Y}(\theta_{k-i}),Y(\theta_{k-i})\right)\|_{\F}^{2}\\
        = & \sum_{i=1}^{k} \eta^{2}B_{1}^{4} \frac{1-\mu^{k-i+1}}{1-\mu}\cdot\frac{1-\mu^{k}}{1-\mu^{k-i+1}}\cdot \mu^{i-1}\E\|\widetilde{\nabla_{Y}F}\left(\tilde{Y}(\theta_{k-i}),Y(\theta_{k-i})\right)\|_{\F}^{2}\\
        = & \sum_{i=1}^{k} \eta^{2}B_{1}^{4} \frac{1-\mu^{k}}{\mu(1-\mu)}\mu^{i}\E\|\widetilde{\nabla_{Y}F}\left(\tilde{Y}(\theta_{k-i}),Y(\theta_{k-i})\right)\|_{\F}^{2}\\
        = & B_{1}^{4}\cdot\frac{\eta^{2}\cdot(1-\mu^{k})}{\mu(1-\mu)} \sum_{i=1}^{k}\mu^{i}\E\|\widetilde{\nabla_{Y}F}\left(\tilde{Y}(\theta_{k-i}),Y(\theta_{k-i})\right)\|_{\F}^{2}\\
        \leq & B_{1}^{4}\cdot\frac{\eta^{2}}{\mu(1-\mu)} \sum_{i=1}^{k}\mu^{i}\E\|\widetilde{\nabla_{Y}F}\left(\tilde{Y}(\theta_{k-i}),Y(\theta_{k-i})\right)\|_{\F}^{2}.
    \end{aligned}$$
\end{proof}
\subsection{Proof of Lemma \ref{l45}}
\label{d_l45}
\begin{proof}
    We prove both assertions by induction on $k$. The base case $k=0$ is immediate since $\tilde{Y}(\theta_0)=Y(\theta_0)$, so $\widetilde{\nabla_{Y}F}=\nabla_{Y}F$ and the inequality~\ref{ineq4_5_1} holds trivially.
    We first establish inequality~\ref{ineq4_5_2} for $k=0$. 

    By Taylor expansion of $\nabla_{Y}F$ around $Y(\theta_{0})$, we have the following integral form:
    $$\nabla_{Y}F(Y(\theta_{1}))=\nabla_{Y}F(Y(\theta_{0}))+R_{0}, $$
    where 
    $$R_0 = \int_{0}^{1}\nabla_{Y}^{2}F\big(Y(\theta_{0})+t\left(Y(\theta_{1})-Y(\theta_{0})\right)\big)\left(Y(\theta_{1})-Y(\theta_{0})\right)\mathrm{d}t.$$
    Using the parameter update $\theta_{1}-\theta_{0}=-\eta H(\theta_0)^{\T}$ and the chain rule,
    $$Y(\theta_{1})-Y(\theta_{0}) =  \int_{0}^{1}\nabla_{\theta}Y(\theta_{0}+t(\theta_{1}-\theta_{0}))(\theta_{1}-\theta_{0})\mathrm{d}t,$$
    here, rather than considering $\nabla_{\theta}Y$ as a matrix, we retain its tensor form. So, with the bound $\|\nabla_{Y}^{2}F\|_{\F}\leq L$ (Assumption $\text{A}_2$) and
    $\|\nabla_{\theta}Y\|_{\F}\leq B_1$ (Assumption $\text{A}_3$),
    $$\|R_0\|_{\F}\leq L\|Y(\theta_{1})-Y(\theta_{0})\|_{\F}\leq LB_{1}\eta\|H(\theta_{0})\|_{\F}. $$
    Taking expectation, applying Lemma \ref{l41} ($\|H\|_{\mathrm{F}}\leq B_{1}\|\widetilde{\nabla_{Y}F}\|_{\mathrm{F}}$) and using $\widetilde{\nabla_{Y}F}(\tilde{Y},Y)=\nabla_{Y}F(Y)$ at $\theta_{0}$, we obtain
    $$\E\|R_0\|_{\F}^{2}\leq (B_{1}^{2}L)^{2}\eta^{2}\E\|\nabla_{Y}F(Y(\theta_{0}))\|_{\F}^{2}. $$
    Then by Minkowski inequality  one gets 
    $$ \sqrt{\E\|\nabla_{Y}F(Y(\theta_{0}))+R_{0}\|_{\F}^{2}}\leq \sqrt{\E\|\nabla_{Y}F(Y(\theta_{0}))\|_{\F}^{2}} + \sqrt{\E\|R_{0}\|_{\F}^{2}},$$
    hence
    $$\begin{aligned}
        \E\|\nabla_{Y}F(Y(\theta_{1}))\|_{\F}^{2} =& \E\|\nabla_{Y}F(Y(\theta_{0}))+R_{0}\|_{\F}^{2}\\
        \leq & \E\|\nabla_{Y}F(Y(\theta_{0}))\|_{\F}^{2} + 2\sqrt{\E\|\nabla_{Y}F(Y(\theta_{0}))\|_{\F}^{2}\cdot\E\|R_{0}\|_{\F}^{2}} +\E\|R_{0}\|_{\F}^{2}\\
        \leq & \|\nabla_{Y}F(Y(\theta_{0}))\|_{\F}^{2} + 2B_{1}^{2}L \cdot \eta\cdot \sqrt{\|\nabla_{Y}F(Y(\theta_{0}))\|_{\F}^{2}\cdot\|\nabla_{Y}F(Y(\theta_0))\|_{\F}^{2}}\\
        &\quad + (B_{1}^{2}L)^{2} \cdot \eta^{2}\cdot \|\nabla_{Y}F(Y(\theta_0))\|_{\F}^{2}\\
        = & \left(1 + 2B_{1}^{2}L\eta + (B_{1}^{2}L)^{2} \cdot \eta^{2}\right) \|\nabla_{Y}F(Y(\theta_0))\|_{\F}^{2}\\
        \leq & \left(1 + 2B_{1}^{2}L\eta + (B_{1}^{2}L)^{2} \cdot \eta\right)\|\nabla_{Y}F(Y(\theta_0))\|_{\F}^{2}\\
        \leq & \left(1 + c_{4}\cdot\eta\right) \|\nabla_{Y}F(Y(\theta_0))\|_{\F}^{2},
    \end{aligned}$$
    where the penultimate inequality follows from $\eta < 1$. 
    Similarly the matching lower bound yields 
    $$\begin{aligned}
        \E\|\nabla_{Y}F(Y(\theta_{1}))\|_{\F}^{2}=& \E\|\nabla_{Y}F(Y(\theta_{0}))+R_{0}\|_{\F}^{2}\\
        \geq & \E \left(\|\nabla_{Y}F(Y(\theta_{0}))\|_{\F} - \|R_{0}\|_{\F}\right)^{2}\\
        \geq & \E\|\nabla_{Y}F(Y(\theta_{0}))\|_{\F}^{2} - 2\sqrt{\E\|\nabla_{Y}F(Y(\theta_{0}))\|_{\F}^{2}\cdot\E\|R_{0}\|_{\F}^{2}} + \E\|R_{0}\|_{\F}^{2} \\
        \geq & \E\|\nabla_{Y}F(Y(\theta_{0}))\|_{\F}^{2} - 2\sqrt{\E\|\nabla_{Y}F(Y(\theta_{0}))\|_{\F}^{2}\cdot\E\|R_{0}\|_{\F}^{2}} -\E\|R_{0}\|_{\F}^{2} \\
        \geq & \left(1 - 2B_{1}^{2}L\eta - (B_{1}^{2}L)^{2} \cdot \eta^{2}\right)\|\nabla_{Y}F(Y(\theta_0))\|_{\F}^{2}\\
        \geq & (1 - c_{4}\cdot\eta) \|\nabla_{Y}F(Y(\theta_0))\|_{\F}^{2}.
    \end{aligned}$$
    Thus inequality~\ref{ineq4_5_2} holds at $k=0$.
    Assume that inequality~\ref{ineq4_5_1} and ~\ref{ineq4_5_2} hold for all indices $i<k$. Then we prove them for $k$. 

    Under Assumption $\text{A}_6$ the approximation operator satisfies the linearization bound
    $$\widetilde{\nabla_{Y}F}(\tilde{Y},Y)=\nabla_{Y}F(Y)+\int_{0}^{1}\mathcal{T}(Y+t(\tilde{Y}-Y))[\tilde{Y}-Y]\mathrm{d}t,$$
    with $\|\mathcal{T}(\Theta)[\Delta]\|_{\mathrm{F}}\leq B_{3}\|\Delta\|_{\mathrm{F}}$ for all relevant $\Theta$. Hence
     \begin{equation}\label{eq45_1}
    \begin{aligned}
    \|\widetilde{\nabla_{Y}F}(\tilde{Y},Y)\|_{\F}^{2}\leq &
    2 \|\nabla_{Y}F(Y)\|_{\F}^{2}+2\|\int_{0}^{1}\mathcal{T}(Y+t(\tilde{Y}-Y))[\tilde{Y}-Y]\mathrm{d}t\|_{\F}^{2}\\
    \leq& 2 \|\nabla_{Y}F(Y)\|_{\F}^{2}+2\int_{0}^{1}B_{3}^{2}\|\tilde{Y}-Y\|_{\F}^{2}\mathrm{d}t\\
    \leq& 2 \|\nabla_{Y}F(Y)\|_{\F}^{2}+2B_{3}^{2}\|\tilde{Y}-Y\|_{\F}^{2}.
    \end{aligned}
    \end{equation}
    Combining Lemma \ref{l44} and the induction hypothesis,
    \begin{equation*}\label{eq45_2}
    \begin{aligned}
        \E\|\tilde{Y}(\theta_{k})-Y(\theta_{k})\|_{\F}^{2}\leq & 
        B_{1}^{4}\cdot\frac{\eta^{2}}{\mu(1-\mu)} \sum_{i=1}^{k}\mu^{i}\E\|\widetilde{\nabla_{Y}F}\left(\tilde{Y}(\theta_{k-i}),Y(\theta_{k-i})\right)\|_{\F}^{2}\\
        \leq & B_{1}^{4}\cdot\frac{4\eta^{2}}{\mu(1-\mu)}\sum_{i=1}^{k}\mu^{i}\E\|\nabla_{Y}F(Y(\theta_{k-i}))\|_{\F}^{2}\\
        \leq & B_{1}^{4}\cdot\frac{4\eta^{2}}{\mu(1-\mu)}\sum_{i=1}^{k}\mu^{i}(1-c_{4}\cdot\eta)^{-i}\E\|\nabla_{Y}F(Y(\theta_{k}))\|_{\F}^{2}.\\
    \end{aligned}
    \end{equation*}
    Set $\nu:=\mu(1-c_{4}\eta)^{-1}$. Under the first constraint on $\eta$ from the lemma statement one checks that $\nu<1$ ($\nu\leq(1-|\mathcal{B}|/2N)^{-1}<1$). Hence summing the geometric series yields
    $$\sum_{i=1}^{k}\mu^{i}(1-c_{4}\cdot\eta)^{-i}\leq \sum_{i=1}^{\infty}\nu^{i}=\frac{\nu}{1-\nu}\leq \frac{1}{1-\nu}.$$
    Therefore
    $$\E\|\tilde{Y}(\theta_{k})-Y(\theta_{k})\|_{\F}^{2}\leq B_{1}^{4}\cdot\frac{4\eta^{2}}{\mu(1-\mu)}\cdot\frac{1}{1-\nu}\E\|\nabla_{Y}F(Y(\theta_{k}))\|_{\F}^{2}. $$
    Plugging this bound into inequality~\ref{eq45_1} gives
    \begin{equation*}
    \begin{aligned}
        \E\|\widetilde{\nabla_{Y}F}(\tilde{Y}(\theta_{k}),Y(\theta_{k}))\|_{\F}^{2}\leq & 2\E\|\nabla_{Y}F(Y(\theta_{k}))\|_{\F}^{2} + 2B_{3}^{2}\E\|\tilde{Y}(\theta_{k})-Y(\theta_{k})\|_{\F}^{2}\\
        \leq & 2\E\|\nabla_{Y}F(Y(\theta_{k}))\|_{\F}^{2} + 2B_{3}^{2}B_{1}^{4}\cdot\frac{4\eta^{2}}{\mu(1-\mu)}\cdot \frac{1}{1-\nu}\E\|\nabla_{Y}F(Y(\theta_{k}))\|_{\F}^{2}\\
        \leq & 2\E\|\nabla_{Y}F(Y(\theta_{k}))\|_{\F}^{2} + \frac{8B_{3}^{2}B_{1}^{4}}{\mu(1-\mu)}\cdot\frac{\mu(1-\mu)|\mathcal{B}|^{2}}{32B_{1}^{4}B_{3}^{2}(2N-|\mathcal{B})|^{2}}\cdot\frac{1}{1-\nu}\E\|\nabla_{Y}F(Y(\theta_{k}))\|_{\F}^{2}\\
        = & 2\E\|\nabla_{Y}F(Y(\theta_{k}))\|_{\F}^{2} + \frac{1}{2}\cdot\frac{|\mathcal{B}|^{2}}{2(2N-|\mathcal{B})|^{2}}\cdot\frac{1}{1-\nu}\E\|\nabla_{Y}F(Y(\theta_{k}))\|_{\F}^{2}.
    \end{aligned}\end{equation*}
    Since 
    $$\begin{aligned}
        1-\nu =& 1 - \frac{\mu}{1-c_{4}\cdot\eta} > 1 - \frac{\mu}{1-|\mathcal{B}|/(2N)}\\
        =&1 - \sqrt{\frac{1-|\mathcal{B}|/N}{[1-|\mathcal{B}|/(2N)]^{2}}} = 1 - \sqrt{1 - \frac{|\mathcal{B}|^{2}}{(2N-|\mathcal{B}|)^{2}}}\\
        > &  1 - \left(1- \frac{|\mathcal{B}|^{2}}{2(2N-|\mathcal{B}|)^{2}}\right)
        =\frac{|\mathcal{B}|^{2}}{2(2N-|\mathcal{B}|)^{2}},
    \end{aligned}$$
    where the last inequality follows from $\sqrt{1-x}\leq 1-x/2$, then
    \begin{equation}\label{eq45_3}
    \begin{aligned}
        \E\|\widetilde{\nabla_{Y}F}(\tilde{Y}(\theta_{k}),Y(\theta_{k}))\|_{\F}^{2}\leq & 2\E\|\nabla_{Y}F(Y(\theta_{k}))\|_{\F}^{2} + \frac{1}{2}\cdot\frac{|\mathcal{B}|^{2}}{2(2N-|\mathcal{B})|^{2}}\cdot\frac{1}{1-\nu}\E\|\nabla_{Y}F(Y(\theta_{k}))\|_{\F}^{2}\\
        \leq & 4\E\|\nabla_{Y}F(Y(\theta_{k}))\|_{\F}^{2}.
    \end{aligned}
    \end{equation}
    Similarly, 
    $$\begin{aligned}
     \E\|\nabla_{Y}F(Y(\theta_{k}))\|_{\F}^{2} = &\E\|\widetilde{\nabla_{Y}F}(\tilde{Y}(\theta_{k}),Y(\theta_{k})) - \int_{0}^{1}\mathcal{T}\left(Y(\theta_{k})+t(\tilde{Y}(\theta_{k})-Y(\theta_{k}))\right)[\tilde{Y}(\theta_{k})-Y(\theta_{k})]\mathrm{d}t\|_{\F}^{2}\\
    \leq &  2\E\|\widetilde{\nabla_{Y}F}(\tilde{Y}(\theta_{k}),Y(\theta_{k}))\|_{\F}^{2} + 2B_{3}^{2}\E\|\tilde{Y}(\theta_{k})-Y(\theta_{k})\|_{\F}^{2}\\
    \leq & 2\E\|\widetilde{\nabla_{Y}F}(\tilde{Y}(\theta_{k}),Y(\theta_{k}))\|_{\F}^{2} + \frac{1}{2}\E\|\nabla_{Y}F(Y(\theta_{k}))\|_{\F}^{2},
    \end{aligned}$$
    which means $\E\|\nabla_{Y}F(Y(\theta_{k}))\|_{\F}^{2} \leq 4\E\|\widetilde{\nabla_{Y}F}(\tilde{Y}(\theta_{k}),Y(\theta_{k}))\|_{\F}^{2}$. 
    At last, according to Taylor expansion as at the beginning of the proof:
    $$\nabla_{Y}F(Y(\theta_{k+1}))=\nabla_{Y}F(Y(\theta_{k}))+R_{k},$$
    where 
    $$R_{k}=\int_{0}^{1}\nabla_{Y}^{2}F\left(Y(\theta_{k})+t(Y(\theta_{k+1})-Y(\theta_{k}))\right)(Y(\theta_{k+1})-Y(\theta_{k}))\mathrm{d}t.$$
    And here, likewise, rather than considering $\nabla_{\theta}Y$ as a matrix, we retain its tensor form. Then according to Lemma \ref{l41} and inequality \ref{eq45_3}
    $$\begin{aligned}
    \E\|R_{k}\|_{\F}^{2}\leq& \E\big[\int_{0}^{1}L^{2}\|Y(\theta_{k+1})-Y(\theta_{k})\|_{\F}^{2}\big]\\
    \leq &L^{2}\cdot B_{1}^{2}\cdot \eta^{2}\cdot\E\|H(\theta_{k})\|_{\F}^{2}\\
     \leq & (B_{1}^{2}L)^{2} \cdot \eta^{2}\cdot\E\|\widetilde{\nabla_{Y}F}\left(\tilde{Y}(\theta_{k}),Y(\theta_{k})\right)\|_{\F}^{2}
    \\ \leq & (B_{1}^{2}L)^{2} \cdot \eta^{2}\cdot 4\E\|\nabla_{Y}F(Y(\theta_{k}))\|_{\F}^{2}.
    \end{aligned}$$
    Hence, similar to the proof for $k=0$, we have 
    $$\begin{aligned}
        \E\|\nabla_{Y}F(Y(\theta_{k+1}))\|_{\F}^{2} =& \E\|\nabla_{Y}F(Y(\theta_{k}))+R_{k}\|_{\F}^{2}\\
        \leq & \E\|\nabla_{Y}F(Y(\theta_{k}))\|_{\F}^{2} + 2\sqrt{\E\|\nabla_{Y}F(Y(\theta_{k}))\|_{\F}^{2}\cdot\E\|R_{k}\|_{\F}^{2}} +\E\|R_{k}\|_{\F}^{2}\\
        \leq & \|\nabla_{Y}F(Y(\theta_{k}))\|_{\F}^{2} + 4B_{1}^{2}L \cdot \eta\cdot \sqrt{\E\|\nabla_{Y}F(Y(\theta_{k}))\|_{\F}^{2}\cdot\E\|\nabla_{Y}F(Y(\theta_k))\|_{\F}^{2}}\\
        &\quad + (2B_{1}^{2}L)^{2} \cdot \eta^{2}\cdot \E\|\nabla_{Y}F(Y(\theta_k))\|_{\F}^{2}\\
        = & \left(1 + 4B_{1}^{2}L\eta + 4(B_{1}^{2}L)^{2} \cdot \eta^{2}\right) \|\nabla_{Y}F(Y(\theta_k))\|_{\F}^{2}\\
        \leq & \left(1 + 4B_{1}^{2}L\eta + 4(B_{1}^{2}L)^{2} \cdot \eta\right)\|\nabla_{Y}F(Y(\theta_k))\|_{\F}^{2}\\
        = & \left(1 + c_{4}\cdot\eta\right) \|\nabla_{Y}F(Y(\theta_k))\|_{\F}^{2},
    \end{aligned}$$
    where the penultimate inequality follows from $\eta < 1$. In the same way, we can get 
    $$ \E\|\nabla_{Y}F(Y(\theta_{k+1}))\|_{\F}^{2} \geq \left(1 - 4B_{1}^{2}L\eta - 4(B_{1}^{2}L)^{2} \cdot \eta^{2}\right)\|\nabla_{Y}F(Y(\theta_k))\|_{\F}^{2}
    = (1 - c_{4}\cdot\eta) \|\nabla_{Y}F(Y(\theta_k))\|_{\F}^{2}.$$
    Thus, we have completed the proof the induction step for $k$. Therefore, by mathematical induction, the lemma holds.
\end{proof}
\subsection{Proof of Lemma \ref{l46}}
\label{d_l46}
\begin{proof}
    According to Lemma \ref{l42},
    $$ F(Y^{+})\leq F(Y)-\eta\cdot\sum\limits_{i=1}^{N}\langle z_{i}, H^{\T}\rangle + c_{2}\cdot\eta^{2}\|\widetilde{\nabla_{Y}F}\|_{\F}^{2}.$$
    Let $\mathcal{F}$ denote the filtration generated by all randomness up to the current iterate. For the single-step analysis of this theorem, we take conditional expectation over the random mini-batch $\mathcal{B}$ given $\mathcal{F}$, denoted by $\E_{\mathcal{B}}[\cdot]:=\E[\cdot\mid\mathcal{F}]$. This yields
    \begin{equation*} 
        \begin{aligned}
           \E_{\mathcal{B}}\big[F(Y^{+})\big]&\leq \E_{\mathcal{B}}\big[F(Y)-\eta\cdot\sum\limits_{i=1}^{N}\langle z_{i}, H^{\T}\rangle + c_{2}\cdot\eta^{2}\|\widetilde{\nabla_{Y}F}\|_{\F}^{2}\big]\\
            &= F(Y)-\eta\cdot\sum\limits_{i=1}^{N}\langle z_{i},\E_{\mathcal{B}}[H^{\T}]\rangle + c_{2}\cdot\eta^{2}\|\widetilde{\nabla_{Y}F}\|_{\F}^{2}\\
            &= F(Y)-\eta\cdot\sum\limits_{i=1}^{N}\langle z_{i},\frac{|\mathcal{B}|}{N}\sum\limits_{j=1}^{N}p_{j}\rangle + c_{2}\cdot\eta^{2}\|\widetilde{\nabla_{Y}F}\|_{\F}^{2},\\
        \end{aligned}
    \end{equation*}
    where $p_j$ denotes the per-sample contribution and we used $\E_{\mathcal{B}}[H]=\frac{|\mathcal{B}|}{N}\sum_j p_j$ since $p_j$ is $\mathcal{F}$-measurable (determined by the history up to the current iterate) and is thus independent of the current mini-batch $\mathcal{B}$.
    Rewriting the double sum in matrix form and using the identification (row-wise concatenation)
    of the block gradient vectors, the main descent term can be written as
    \begin{equation*}
        \begin{aligned}
           \E_{\mathcal{B}}\big[F(Y^{+})\big]&
            \leq F(Y)-\eta\cdot\frac{|\mathcal{B}|}{N}\cdot\sum\limits_{i=1}^{N}\sum\limits_{j=1}^{N}\langle z_{i}, p_{j}\rangle + c_{2}\cdot\eta^{2}\|\widetilde{\nabla_{Y}F}\|_{\F}^{2}\\
            &= F(Y)-\eta\cdot\frac{|\mathcal{B}|}{N}\cdot\sum\limits_{i=1}^{N}\sum\limits_{j=1}^{N}(\nabla_{Y} F)_{i}\cdot\nabla_{\theta}Y_{i}\cdot\nabla_{\theta}Y_{j}^{\T}\cdot(\widetilde{\nabla_{Y}F})_{j}^{\T} + c_{2}\cdot\eta^{2}\|\widetilde{\nabla_{Y}F}\|_{\F}^{2}\\
            &= F(Y)-\eta\cdot\frac{|\mathcal{B}|}{N}\cdot \hat{g}\big(\nabla_{\theta}Y\cdot(\nabla_{\theta}Y)^{\T}\big)\hat{h}^{\T} + c_{2}\cdot\eta^{2}\|\widetilde{\nabla_{Y}F}\|_{\F}^{2},
        \end{aligned}
    \end{equation*}
    where $\hat{g}$ corresponds to the row-vectorized $\nabla_{Y}F$ and $\hat{h}$ corresponds to the row-vectorized $\widetilde{\nabla_{Y}F}$.
    Since
    $$\begin{aligned}
    \widetilde{\nabla_{Y}F}=\widetilde{\nabla_{Y}F}(\tilde{Y},Y) 
    =& \widetilde{\nabla_{Y}F}(Y,Y) + \int_{0}^{1}\mathcal{T}(Y+t(\tilde{Y}-Y))[\tilde{Y}-Y]\mathrm{d}t\\
    =&\nabla_{Y}F + \int_{0}^{1}\mathcal{T}(Y+t(\tilde{Y}-Y))[\tilde{Y}-Y]\mathrm{d}t.
    \end{aligned}$$
    Considering the row-vectorized form, let $\hat{\mathcal{T}}$ denote the corresponding derivative matrix (Jacobian) induced by row-wise vectorization, satisfying
    $\text{vec}_{r}(\mathcal{T}(\Theta)[\Delta]) = \hat{\mathcal{T}}(\Theta)[\text{vec}_{r}(\Delta)]$ for any matrix $\Delta$.
    Denote $$R_{r} = \int_{0}^{1}\hat{\mathcal{T}}(Y+t(\tilde{Y}-Y))[\text{vec}_r(\tilde{Y}-Y)]\mathrm{d}t,$$ by Assumption $\text{A}_{6}$, we have
    $\|R_{r}\|_{\F}\leq B_{3}\| \tilde{Y}-Y\|_{\F}$. Then $\hat{h}=\hat{g} + R_{r}$, and furthermore
    \begin{equation*}
        \begin{aligned}
            \E_{\mathcal{B}}\big[F(Y^{+})\big]\leq &F(Y)-\eta\cdot\frac{|\mathcal{B}|}{N}\cdot \hat{g}\big(\nabla_{\theta}Y\cdot(\nabla_{\theta}Y)^{\T}\big)\hat{h}^{\T} + c_{2}\cdot\eta^{2}\|\widetilde{\nabla_{Y}F}\|_{\F}^{2}\\
             = & F(Y)-\eta\cdot\frac{|\mathcal{B}|}{N}\cdot \hat{g}\big(\nabla_{\theta}Y\cdot(\nabla_{\theta}Y)^{\T}\big)\hat{g}^{\T} + c_{2}\cdot\eta^{2}\|\widetilde{\nabla_{Y}F}\|_{\F}^{2} 
              \\ & \quad - \eta^{2}\cdot\frac{|\mathcal{B}|}{N}\cdot \hat{g}\big(\nabla_{\theta}Y\cdot(\nabla_{\theta}Y)^{\T}\big)\cdot \frac{1}{\eta}R_{r}^{\T}\\
              \leq & F(Y) -\eta\cdot\frac{|\mathcal{B}|}{N}\cdot\lambda_{\min}\|\hat{g}\|_{\F}^{2} + c_{2}\cdot\eta^{2}\|\widetilde{\nabla_{Y}F}\|_{\F}^{2}\\
              &\quad +  \eta^{2}\cdot\frac{|\mathcal{B}|}{N}\cdot \frac{1}{2}\left(\|\hat{g}\big(\nabla_{\theta}Y\cdot(\nabla_{\theta}Y)^{\T}\big)\|_{\F}^{2} + \frac{1}{\eta^{2}}\|R_{r}\|_{\F}^{2}\right)\\
              \leq & F(Y) -\eta\cdot\frac{|\mathcal{B}|}{N}\cdot\lambda_{\min}\|\hat{g}\|_{\F}^{2} + c_{2}\cdot\eta^{2}\|\widetilde{\nabla_{Y}F}\|_{\F}^{2}\\
              &\quad + \eta^{2}\cdot\frac{|\mathcal{B}|}{2N}\cdot B_{1}^{4}\|\hat{g}\|_{\F}^{2} + \eta^{2}\cdot\frac{|\mathcal{B}|}{2N}\cdot \frac{B_{3}^{2}}{\eta^{2}}\|\tilde{Y}-Y\|_{\F}^{2}\\
              = & F(Y) - \eta\cdot\frac{|\mathcal{B}|}{N}\left(\lambda_{\min}-\frac{B_{1}^{4}\eta}{2} \right)\|\nabla_{Y}F\|_{\F}^{2} + c_{2}\cdot\eta^{2}\|\widetilde{\nabla_{Y}F}\|_{\F}^{2} + \frac{|\mathcal{B}|\cdot B_{3}^{2}}{2N}\|\tilde{Y}-Y\|_{\F}^{2}.
        \end{aligned}
    \end{equation*}
    According to Lemma ~\ref{l44} and ~\ref{l45} we have
        \begin{equation*}
    \begin{aligned}
        \E\|\tilde{Y}(\theta_{k})-Y(\theta_{k})\|_{\F}^{2}\leq & 
        B_{1}^{4}\cdot\frac{\eta^{2}}{\mu(1-\mu)} \sum_{i=1}^{k}\mu^{i}\E\|\widetilde{\nabla_{Y}F}\left(\tilde{Y}(\theta_{k-i}),Y(\theta_{k-i})\right)\|_{\F}^{2}\\
        \leq & B_{1}^{4}\cdot\frac{4\eta^{2}}{\mu(1-\mu)}\sum_{i=1}^{k}\mu^{i}\E\|\nabla_{Y}F(Y(\theta_{k-i}))\|_{\F}^{2}\\
        \leq & B_{1}^{4}\cdot\frac{4\eta^{2}}{\mu(1-\mu)}\sum_{i=1}^{k}\mu^{i}(1-c_{4}\cdot\eta)^{-i}\E\|\nabla_{Y}F(Y(\theta_{k}))\|_{\F}^{2}\\
        \leq & B_{1}^{4}\cdot\frac{4\eta^{2}}{\mu(1-\mu)}\cdot\frac{1}{1-\nu}\E\|\nabla_{Y}F(Y(\theta_{k}))\|_{\F}^{2}.
    \end{aligned}
    \end{equation*}
    Further applying Lemma \ref{l45} yields
    \begin{equation*}
        \begin{aligned}
        \E\big[F(Y(\theta_{k+1}))\big]\leq& \E\big[F(Y(\theta_{k}))\big] - \eta\cdot\frac{|\mathcal{B}|}{N}\left(\lambda_{\min}-\frac{B_{1}^{4}\eta}{2} \right)\E\|\nabla_{Y}F(Y(\theta_{k}))\|_{\F}^{2} \\
        &\quad + c_{2}\cdot\eta^{2}\cdot 4\E\|\nabla_{Y}F(Y(\theta_{k}))\|_{\F}^{2} + \frac{2|\mathcal{B}|\cdot B_{1}^{4}B_{3}^{2}}{N\mu(1-\mu)(1-\nu)}\cdot\eta^{2}\cdot\E\|\nabla_{Y}F(Y(\theta_{k}))\|_{\F}^{2}\\
        =& \E\big[F(Y(\theta_{k}))\big] - \eta\cdot\left(\frac{|\mathcal{B}|}{N}\lambda_{\min} - c_{5}\cdot\eta\right)\E\|\nabla_{Y}F(Y(\theta_{k}))\|_{\F}^{2}.
        \end{aligned}
    \end{equation*}
    Take $c_{6} = \frac{|\mathcal{B}|}{N}\lambda_{\min} - c_{5}\cdot\eta$, then 
    $$ \E\big[F(Y(\theta_{k+1}))\big]\leq \E\big[F(Y(\theta_{k}))\big] - c_{6}\cdot\eta \E\|\nabla_{Y}F(Y(\theta_{k}))\|_{\F}^{2}.$$
\end{proof}
\printbibliography[title=Reference]
\end{document}